\renewcommand{\epsilon}{\varepsilon}
\renewcommand{\limsup}{\varlimsup}
\renewcommand{\liminf}{\varliminf}
\renewcommand{\Re}{\operatorname{Re}}
\newcommand{\abs}[1]{\left|#1\right|}
\newcounter{count}
\newcommand{\num}{\stepcounter{count}\the\value{count}}
\numberwithin{equation}{section}
\newtheorem{theorem}{Theorem}[section]
\newtheorem{lemma}[theorem]{Lemma}
\newtheorem{corollary}[theorem]{Corollary}
\newtheorem{definition}[theorem]{Definition}
\newtheorem{proposition}[theorem]{Proposition}
\newtheorem{question}[theorem]{Question}
\theoremstyle{definition}
\newtheorem{remark}[theorem]{Remark}
\newcommand{\cB}{\mathcal{B}}
\newcommand{\cC}{\mathcal{C}}
\newcommand{\cF}{\mathcal{F}}
\newcommand{\cH}{\mathcal{H}}
\newcommand{\cI}{\mathcal{I}}
\newcommand{\cP}{\mathcal{P}}
\newcommand{\cQ}{\mathcal{Q}}
\newcommand{\cS}{\mathcal{S}}
\newcommand{\cV}{\mathcal{V}}
\newcommand{\cW}{\mathcal{W}}
\newcommand{\cLE}{\mathcal{LE}}
\title[Distributions of Finite Sequences Represented by Polynomials]{Distributions of Finite Sequences Represented by Polynomials in Piatetski-Shapiro Sequences}
\author[K.\ Saito]{Kota Saito}
\address{Kota Saito\\
Graduate School of Mathematics\\ Nagoya University\\ Furo-cho\\ Chikusa-ku\\ Nagoya\\ 464-8602\\ Japan}
\curraddr{}
\email{m17013b@math.nagoya-u.ac.jp}
\author[Y.\ Yoshida]{Yuuya Yoshida}
\address{Yuuya Yoshida\\
Graduate School of Mathematics\\ Nagoya University\\ Furo-cho\\ Chikusa-ku\\ Nagoya\\ 464-8602\\ Japan}
\curraddr{}
\email{m17043e@math.nagoya-u.ac.jp}
\subjclass[2010]{Primary: 11B30, Secondary: 11B25, 37A45.}
\keywords{Piatetski-Shapiro sequence, Hardy field, uniform distribution, discrepancy.}
\begin{document}
\maketitle

\begin{abstract}
By using the work of Frantzikinakis and Wierdl, we can see that 
for all $d\in\mathbb{N}$, $\alpha\in(d,d+1)$, and integers $k\ge d+2$ and $r\ge1$, 
there exist infinitely many $n\in\mathbb{N}$ 
such that the sequence $(\lfloor{(n+rj)^\alpha}\rfloor)_{j=0}^{k-1}$ is represented as 
$\lfloor{(n+rj)^\alpha}\rfloor=p(j)$, $j=0,1,\ldots,k-1$, 
by using some polynomial $p(x)\in\mathbb{Q}[x]$ of degree at most $d$.
In particular, the above sequence is an arithmetic progression when $d=1$.
In this paper, we show the asymptotic density of such numbers $n$ as above.
When $d=1$, the asymptotic density is equal to $1/(k-1)$.
Although the common difference $r$ is arbitrarily fixed in the above result, 
we also examine the case when $r$ is not fixed.
Most results in this paper are generalized by using functions belonging to Hardy fields.
\end{abstract}

\section{Introduction}\label{intro}

\textit{Notations.}---%
A subset $A$ of $\mathbb{N} \coloneqq \{1,2,\ldots\}$ is naturally identified with 
a strictly increasing sequence of $\mathbb{N}$, and \textit{vice versa}.
For an interval $\cI$ of $\mathbb{R}$, the notation $\cI_{\mathbb{Z}}$ denotes the set $\cI\cap\mathbb{Z}$; 
for $x\in\mathbb{R}$, the notation $\lfloor{x}\rfloor$ (resp.\ $\lceil{x}\rceil$) denotes 
the greatest (resp.\ least) integer $\le x$ (resp.\ $\ge x$).

\textit{Motivation and introduction.}---%
For non-integral $\alpha>1$, 
the sequence $\mathrm{PS}(\alpha) \coloneqq (\lfloor{n^\alpha}\rfloor)_{n=1}^\infty$ 
is called a \textit{Piatetski-Shapiro sequence}, 
which has rich mathematical structures, 
e.g., prime numbers \cite{Piatetski-Shapiro,Rivat-Wu,Rivat-Sargos}, square-free numbers \cite{Baker,Cao-Zhai1,Cao-Zhai2}, 
cube-free numbers \cite{Deshouillers2}, and numbers congruent to $c$ modulo $m$ \cite{Deshouillers1,Morgenbesser} 
(for details, see the end of this section).
As another mathematical structure, 
we consider the sets 
\[
\cP_{k,d} \coloneqq \biggl\{
\begin{array}{l}
	(a(n))_{n=0}^{k-1}\subset\mathbb{N}\\
	\text{ strictly increasing}
\end{array}
: (\Delta_1^d a(n))_{n=0}^{k-d-1}\text{ is a constant sequence} \biggr\}
\]
with integers $d\ge1$ and $k\ge d+2$, 
where $\Delta_r^d$ is the $d$-th order difference operator, i.e., 
\[
\Delta_r a(n) \coloneqq a(n+r) - a(n),\quad
\Delta_r^m \coloneqq \Delta_r\circ\Delta_r^{m-1}\quad(m=2,3,\ldots).
\]
A sequence $(a(n))_{n=0}^{k-1}$ of $\mathbb{N}$ belongs to $\cP_{k,d}$ 
if and only if $(a(n))_{n=0}^{k-1}$ is represented as 
$a(n)=p(n)$, $n\in[0,k)_{\mathbb{Z}}$, 
by using some polynomial $p(x)\in\mathbb{Q}[x]$ of degree at most $d$.
When $d=1$, 
a sequence belonging to $\cP_{k,1}$ is called an \textit{arithmetic progression of length $k$ (for short, $k$-AP)}, 
which is a simple additive structure of $\mathbb{N}$.

We find that every $\mathrm{PS}(\alpha)$ with $\alpha\in(d,d+1)$ and $d\in\mathbb{N}$ 
contains infinitely many sequences belonging to $\cP_{k,d}$ 
by using the work of Frantzikinakis and Wierdl \cite{FW}.
Precisely speaking, 
for all $d\in\mathbb{N}$, $\alpha\in(d,d+1)$, and integers $k\ge d+2$ and $r\ge1$, 
there exist infinitely many $n\in\mathbb{N}$ 
such that $(\lfloor{(n+rj)^\alpha}\rfloor)_{j=0}^{k-1}$ belongs to $\cP_{k,d}$.
However, the asymptotic density of such numbers $n$ was not known.
In this paper, we show the asymptotic density, 
which can be expressed as the volume of a convex set of $\mathbb{R}^{d+1}$.

\begin{theorem}\label{main00}
	Let $d\in\mathbb{N}$.
	For all $\alpha\in(d,d+1)$ and all integers $k\ge d+2$ and $r\ge1$, 
	\begin{equation*}
		\lim_{N\to\infty} \frac{1}{N} \#\{ n\in[1,N]_{\mathbb{Z}} : (\lfloor{(n+rj)^\alpha}\rfloor)_{j=0}^{k-1}\in\cP_{k,d} \}
		= \mu(\cC_{k,d+1}), 
	\end{equation*}
	where $\mu$ denotes the Lebesgue measure on $\mathbb{R}^{d+1}$ and 
	the convex set $\cC_{k,d+1}$ of $\mathbb{R}^{d+1}$ is defined as 
	\begin{equation}
		\cC_{k,d+1} = \biggl\{ (y_i)_{i=0}^d\in\mathbb{R}^{d+1} : 0\le y_0<1,\ 0\le\sum_{i=0}^d \binom{j}{i}y_i<1\ (\forall j\in[1,k)_{\mathbb{Z}}) \biggr\}.
		\label{Ckd}
	\end{equation}
	Also, $\mu(\cC_{k,d+1})$ is bounded below by $1/\prod_{i=1}^d \binom{k-1}{i}$.
\end{theorem}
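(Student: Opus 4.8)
The plan is to reduce the density assertion to the equidistribution of an explicit $(d+1)$-dimensional sequence, and then to extract the lower bound from a volume estimate for the polytope $\cC_{k,d+1}$.

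\emph{A pointwise reformulation.} Fix $n$ and write $f(x)=(n+rx)^\alpha$. Since a polynomial of degree $\le d$ is determined by its values at $0,1,\dots,d$ and $d<k$, for all but finitely many $n$ (so that the sequence is automatically strictly increasing) one has $(\lfloor f(j)\rfloor)_{j=0}^{k-1}\in\cP_{k,d}$ if and only if the interpolating polynomial $P_n(x)=\sum_{i=0}^d\binom xi b_i(n)$, $b_i(n)=\sum_{l=0}^i(-1)^{i-l}\binom il\lfloor(n+rl)^\alpha\rfloor\in\mathbb Z$, satisfies $f(j)-P_n(j)\in[0,1)$ for every $j\in[0,k)_\mathbb Z$ (the cases $j=0,\dots,d$ holding by construction). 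Let $R_n(x)=\sum_{i=0}^d\binom xi w_i(n)$, $w_i(n)=\sum_{l=0}^i(-1)^{i-l}\binom il(n+rl)^\alpha$, be the interpolant of $f$ itself at $0,\dots,d$. Then
\begin{align*}
f(j)-P_n(j)&=E_n(j)+\sum_{i=0}^d\binom ji y_i(n),\qquad E_n(j):=f(j)-R_n(j),\\
y_i(n)&:=w_i(n)-b_i(n)=\sum_{l=0}^i(-1)^{i-l}\binom il\{(n+rl)^\alpha\},
\end{align*}
so in particular $y_0(n)=\{n^\alpha\}$. The Lagrange remainder together with $f^{(d+1)}(x)=\alpha(\alpha-1)\cdots(\alpha-d)\,r^{d+1}(n+rx)^{\alpha-d-1}$ and $\alpha-d-1<0$ gives a constant $C=C(k,r,\alpha)$ with $|E_n(j)|\le Cn^{\alpha-d-1}$ uniformly for $0\le j\le k-1$. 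Consequently the event $(\lfloor f(j)\rfloor)_{j=0}^{k-1}\in\cP_{k,d}$ agrees with $\mathbf y(n):=(y_i(n))_{i=0}^d\in\cC_{k,d+1}$ except for those $n$ for which some $\sum_{i=0}^d\binom ji y_i(n)$ lies within $Cn^{\alpha-d-1}$ of $\{0,1\}$.

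\emph{Equidistribution and the density.} The linear map $L\colon(u_j)_{j=0}^d\mapsto\big(\sum_{l=0}^i(-1)^{i-l}\binom il u_l\big)_{i=0}^d$ is unimodular, $\mathbf y(n)=L\big(\{n^\alpha\},\dots,\{(n+rd)^\alpha\}\big)$, and $\cC_{k,d+1}\subseteq L([0,1)^{d+1})$, because the inequalities of $\cC_{k,d+1}$ for $j=0,\dots,d$ say exactly that the $L$-preimage lies in $[0,1)^{d+1}$. The next step is to check, via Weyl's criterion, that the vector $(\{(n+rl)^\alpha\})_{l=0}^d$ is uniformly distributed in $[0,1)^{d+1}$: for $\mathbf h\in\mathbb Z^{d+1}\setminus\{0\}$ the asymptotic expansion gives $\sum_l h_l(n+rl)^\alpha=Cn^{\alpha-s}(1+O(1/n))$ with $s=\min\{m:\sum_l h_l l^m\ne0\}\le d$ and leading coefficient $\binom\alpha s r^s\sum_l h_l l^s\ne0$, and since $\alpha-s\in(0,\infty)\setminus\mathbb Z$ this sequence is u.d.\ mod $1$ by the classical van der Corput criterion for $(cn^\beta)_n$ with $\beta>0$ non-integral (whose Hardy-field form is the input from Frantzikinakis--Wierdl). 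Hence $\mathbf y(n)$ equidistributes for $L_\ast(\mathrm{Leb})$; as $\partial\cC_{k,d+1}$ is a finite union of pieces of hyperplanes it is $L_\ast(\mathrm{Leb})$-null, so approximating it by $\varepsilon$-neighbourhoods and letting $\varepsilon\to0$ shows that the exceptional $n$ of the previous paragraph contribute density $0$, and the limit in the theorem equals $L_\ast(\mathrm{Leb})(\cC_{k,d+1})=\mu(\cC_{k,d+1})$ since $|\det L|=1$.

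\emph{The lower bound.} Finally, to show $\mu(\cC_{k,d+1})\ge\prod_{i=1}^d\binom{k-1}i^{-1}$ I would induct on $d$, the case $d=1$ being the direct computation $\mu(\cC_{k,2})=1/(k-1)$. Slicing off the last coordinate: whenever $(y_i)_{i<d}$ satisfies $\sigma_j:=\sum_{i=0}^{d-1}\binom ji y_i\in[0,1)$ for all $j\in[0,k)_\mathbb Z$, the admissible $y_d$ form an interval containing $0$ of length $\ell(y)=\min_{d\le j<k}\frac{1-\sigma_j}{\binom jd}+\min_{d\le j<k}\frac{\sigma_j}{\binom jd}$, and the measure-preserving involution $(y_0,y_1,\dots,y_{d-1})\mapsto(1-y_0,-y_1,\dots,-y_{d-1})$ — which sends each $\sigma_j$ to $1-\sigma_j$ — interchanges the two minima; combining this with the extra mass obtained by also allowing the inequalities indexed by $j\ge d$ to be violated should yield the recursion $\mu(\cC_{k,d+1})\ge\mu(\cC_{k,d})/\binom{k-1}d$. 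I expect this last estimate to be the main obstacle: $\ell(y)$ is \emph{not} pointwise $\ge\binom{k-1}d^{-1}$ — it can be strictly smaller when the polynomial $j\mapsto\sigma_j$ oscillates — so the induction cannot be closed by a naive pointwise bound, and one must carry out the averaging exactly, exploiting that $\sigma$ is the restriction to $k$ consecutive integers of a degree-$\le(d-1)$ polynomial taking values in $[0,1)$, which tightly limits its oscillation. (The crude sub-box $\{0\le y_0<1,\ |y_i|\le\delta/\binom{k-1}i\ (1\le i\le d)\}$ re-centred at $(\tfrac12,0,\dots,0)$ only gives $\mu(\cC_{k,d+1})\gg(d+1)^{-(d+1)}\prod_{i=1}^d\binom{k-1}i^{-1}$, so some sharper argument of the above type is unavoidable.)
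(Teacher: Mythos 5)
Your treatment of the limit itself is sound and is essentially the paper's argument in a different coordinate system: you interpolate $(n+rx)^\alpha$ at the nodes $0,\dots,d$ and land on the finite-difference coordinates $y_i(n)=\sum_{l\le i}(-1)^{i-l}\binom{i}{l}\{(n+rl)^\alpha\}$, while the paper Taylor-expands at $n$ and uses $a_i(n)$ built from $f(n),f'(n),\dots,f^{(d)}(n)/d!$; both reduce the problem to equidistribution of a $(d+1)$-dimensional sequence obtained from a unimodular (resp.\ full-rank) integer transform, followed by the same $\epsilon$-thickening of $\cC_{k,d+1}$ to absorb the $O(n^{\alpha-d-1})$ remainder. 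One caveat in your Weyl step: $\sum_l h_l(n+rl)^\alpha$ is not of the form $cn^\beta$ plus a vanishing perturbation --- the secondary terms are of order $n^{\alpha-s-1}$, which need not tend to $0$ when $s\le d-1$ --- so the ``classical van der Corput criterion for $(cn^\beta)_n$'' does not literally apply; you genuinely need Boshernitzan's Hardy-field criterion (Proposition~\ref{ud-Hardy}), which you allude to, so this is a presentational rather than a mathematical defect.

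The genuine gap is the lower bound $\mu(\cC_{k,d+1})\ge 1/\prod_{i=1}^d\binom{k-1}{i}$, which is part of the statement and which you explicitly leave unproved: your slicing induction is only a plan, you yourself note that the pointwise estimate on the slice length $\ell(y)$ fails, and your fallback sub-box argument loses a factor $(d+1)^{-(d+1)}$ and therefore does not give the claimed constant. The paper closes this with a short linear-programming trick (Lemma~\ref{conv-set-lb}) that avoids induction entirely: consider the polytope $\cC'_{k,d+1}$ cut out by $0\le y_0<1$ and $0\le\sum_{i=0}^{j}\binom{k-1}{i}y_i<1$ for $j\in[1,d]_{\mathbb{Z}}$. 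A triangular change of variables shows $\mu(\cC'_{k,d+1})=1/\prod_{i=1}^d\binom{k-1}{i}$ exactly, and $\cC'_{k,d+1}\subset\cC_{k,d+1}$ because for each $j\in[1,k)_{\mathbb{Z}}$ the linear form $\sum_{i=0}^d\binom{j}{i}y_i$ is a convex combination of the forms $\sum_{i=0}^{l}\binom{k-1}{i}y_i$, $l\in[0,d]_{\mathbb{Z}}$, with weights $c_l=\binom{j}{l}\binom{k-1}{l}^{-1}-\binom{j}{l+1}\binom{k-1}{l+1}^{-1}\ge0$ summing to $1$ (nonnegativity follows from $\binom{j}{l+1}\binom{j}{l}^{-1}\le\binom{k-1}{l+1}\binom{k-1}{l}^{-1}$ for $j\le k-1$). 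You would need to supply an argument of this strength to complete the proof as stated.
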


Note that for integers $n,l\ge0$ the binomial coefficient $\binom{n}{l}$ is defined as 
\[
\binom{n}{l}=\frac{(n)_l}{l!},
\]
where $(x)_l$ denotes the falling factorial: 
$(x)_l=x(x-1)\cdots(x-l+1)$ if $l\in\mathbb{N}$, 
and $(x)_l=1$ if $l=0$.
Hence, $\binom{n}{l}=0$ if $0\le n<l$.
From the last sentence in Theorem~\ref{main00}, 
it follows that $\mu(\cC_{k,d+1})$ is positive.
When $d=1$, we can rewrite Theorem~\ref{main00} more simply.

\begin{corollary}\label{main00'}
	For all $\alpha\in(1,2)$ and all integers $k\ge3$ and $r\ge1$, 
	\begin{equation}
		\lim_{N\to\infty} \frac{1}{N} \#\{ n\in[1,N]_{\mathbb{Z}} : (\lfloor{(n+rj)^\alpha}\rfloor)_{j=0}^{k-1}\in\cP_{k,1} \}
		= \frac{1}{k-1}. \label{eq01'}
	\end{equation}
\end{corollary}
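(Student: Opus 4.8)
The plan is to deduce Corollary~\ref{main00'} directly from Theorem~\ref{main00} by specializing $d=1$ and evaluating the resulting area $\mu(\cC_{k,2})$. Since $\binom{j}{0}=1$ and $\binom{j}{1}=j$, the set $\cC_{k,2}\subset\mathbb{R}^2$ is
\[
\cC_{k,2}=\Bigl\{(y_0,y_1)\in\mathbb{R}^2 : 0\le y_0<1,\ 0\le y_0+jy_1<1\ (j=1,\ldots,k-1)\Bigr\},
\]
so the whole task reduces to showing $\mu(\cC_{k,2})=1/(k-1)$.

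First I would compute this area by Fubini's theorem, integrating over $y_0\in[0,1)$ first (so that $0\le y_0<1$ is automatic) and determining, for each fixed such $y_0$, the one-dimensional measure of the slice $\{y_1 : (y_0,y_1)\in\cC_{k,2}\}$. For $j\ge1$ the constraint $0\le y_0+jy_1<1$ is equivalent to $-y_0/j\le y_1<(1-y_0)/j$, so the slice equals $\bigcap_{j=1}^{k-1}\bigl[-y_0/j,\,(1-y_0)/j\bigr)$. Here I would observe that, because $0\le y_0<1$, the left endpoints $-y_0/j$ are nondecreasing in $j$ while the right endpoints $(1-y_0)/j$ are nonincreasing in $j$, so both the maximal left endpoint and the minimal right endpoint occur at $j=k-1$; since $-y_0/(k-1)<(1-y_0)/(k-1)$ always holds, the slice is exactly $\bigl[-y_0/(k-1),\,(1-y_0)/(k-1)\bigr)$, of length $\frac{(1-y_0)-(-y_0)}{k-1}=\frac{1}{k-1}$.

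Since this slice length does not depend on $y_0$, integrating over $y_0\in[0,1)$ gives $\mu(\cC_{k,2})=1/(k-1)$, and substituting into Theorem~\ref{main00} yields \eqref{eq01'}. The only point requiring a little care is identifying which of the $k-1$ two-sided constraints are binding, i.e.\ the monotonicity of the endpoints in $j$; everything else is routine. As a consistency check, the general lower bound $1/\prod_{i=1}^{d}\binom{k-1}{i}$ from Theorem~\ref{main00} becomes $1/\binom{k-1}{1}=1/(k-1)$ when $d=1$, so in this case that lower bound is in fact an equality.
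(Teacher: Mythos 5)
Your proposal is correct and follows essentially the same route as the paper: the paper likewise observes that for $d=1$ only the $j=k-1$ constraint is binding, so that $\cC_{k,2}=\{(y_0,y_1):0\le y_0<1,\ 0\le y_0+(k-1)y_1<1\}$, and then reads off the area $1/(k-1)$. Your Fubini slice computation just makes explicit the monotonicity-of-endpoints argument and the area evaluation that the paper leaves implicit.
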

\begin{proof}[Proof of Corollary~$\ref{main00'}$ assuming Theorem~$\ref{main00}$]
	Since the convex set $\cC_{k,2}$ is equal to 
	\[
	\{ (y_0,y_1)\in\mathbb{R}^2 : 0\le y_0<1,\ 0\le y_0+(k-1)y_1<1 \},
	\]
	Theorem~\ref{main00} implies this corollary.
\end{proof}

The lower bound $1/\prod_{i=1}^d \binom{k-1}{i}$ of $\mu(\cC_{k,d+1})$ 
is not equal to $\mu(\cC_{k,d+1})$ in general, 
although the two values are equal to each other when $d=1$.
Also, the volume $\mu(\cC_{k,d+1})$ can be computed by using a convex hull algorithm if necessarily.
The definition of Piatetski-Shapiro sequences uses the function $x^\alpha$, 
which is generalized to a function $f$ with certain properties (Theorems~\ref{main0} and \ref{main1}).
Moreover, we estimate the convergence speed of \eqref{eq01'} (Theorem~\ref{main2}).

Theorem~\ref{main00} and Corollary~\ref{main00'} can be regarded as 
the case when the common difference $r$ is fixed.
We next consider the case when the common difference $r$ is not fixed.

\begin{theorem}\label{main01}
	Let $d\in\mathbb{N}$.
	For all $\alpha\in(d,d+1)$ and all integers $k\ge d+2$, 
	there exist $A_{\alpha,k}, B_{\alpha,k}>0$ and $N_{\alpha,k}\in\mathbb{N}$ such that 
	for all integers $N\geq N_{\alpha,k}$, 
	\begin{equation}
		A_{\alpha,k}N^{2-\alpha/(d+1)}
		\leq \#\{ P\subset[1,N]_{\mathbb{Z}} : P\in\cP_{1,d},\ (\lfloor{n^\alpha}\rfloor)_{n\in P}\in\cP_{k,d} \}
		\leq B_{\alpha,k}N^{2-\alpha/(d+1)}. \label{eq02}
	\end{equation}
\end{theorem}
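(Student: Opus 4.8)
The plan is to reduce the counting problem to a question about equidistribution of the fractional parts of the polynomial-type data attached to each pair $(n,r)$, and then to count lattice points in the associated region. For a fixed starting point $n$ and common difference $r$, the sequence $(\lfloor{(n+rj)^\alpha}\rfloor)_{j=0}^{k-1}$ lies in $\cP_{k,d}$ precisely when the $(d{+}1)$-st order difference vanishes, equivalently when a certain linear combination of the values $(n+rj)^\alpha$ with binomial coefficients stays within a unit box modulo $1$; this is exactly the membership condition defining $\cC_{k,d+1}$ in \eqref{Ckd}, now applied to the point $\bigl((n+rj)^\alpha \bmod 1\text{-type coordinates}\bigr)$. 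Writing $P=\{n,n+r,\ldots,n+r(k-1)\}\subset[1,N]_{\mathbb Z}$, the constraint $P\subset[1,N]_{\mathbb Z}$ forces $1\le n$ and $n+r(k-1)\le N$, so $r$ ranges over $[1,\,(N-1)/(k-1)]_{\mathbb Z}$ and, for each such $r$, $n$ ranges over an interval of length $\asymp N$. The key point is that Theorem~\ref{main00} (and its function-field generalizations, Theorems~\ref{main0}--\ref{main1}) already give, for each \emph{fixed} $r$, that the inner count over $n$ is $\mu(\cC_{k,d+1})\cdot N + o(N)$; summing a main term of size $\asymp N$ over $\asymp N/(k-1)$ values of $r$ would naively give order $N^2$, which is too large, so the genuine content is that the effective range of $n$ is much shorter.

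First I would make precise why the effective $n$-range shrinks. The quantitative input is the discrepancy estimate behind Theorem~\ref{main2}: for the Piatetski-Shapiro data with exponent $\alpha\in(d,d+1)$, the relevant fractional parts become equidistributed only after scales comparable to the "derivative scale" of $x^\alpha$, and at height $N$ the $(d{+}1)$-st difference $\Delta_r^{d+1}\lfloor{(n+rj)^\alpha}\rfloor$ is nonzero as soon as $r^{d+1}\cdot N^{\alpha-(d+1)}\gg 1$, i.e. once $r\gg N^{(d+1-\alpha)/(d+1)}$. For such $r$ the spacing of the fractional parts forces the count of admissible $n$ in $[1,N]$ to be $O(N^{1-c})$ for the appropriate $c$, and more carefully the inner count is $\asymp N \cdot r^{-(d+1)} N^{-(\alpha-(d+1))} = N^{d+2-\alpha} r^{-(d+1)}$ in the regime where this is $\le N$, and $\asymp N$ otherwise. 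Summing $\min\{N,\ N^{d+2-\alpha}r^{-(d+1)}\}$ over $r\ge 1$: the threshold is $r_0\asymp N^{(d+1-\alpha)/(d+1)}$; for $r\le r_0$ each term is $\asymp N$ contributing $\asymp N r_0 = N^{1+(d+1-\alpha)/(d+1)} = N^{2-\alpha/(d+1)}$, and for $r>r_0$ the tail $\sum_{r>r_0} N^{d+2-\alpha} r^{-(d+1)} \asymp N^{d+2-\alpha} r_0^{-d} = N^{2-\alpha/(d+1)}$ as well (using $d\ge 1$). Both pieces match the claimed exponent $2-\alpha/(d+1)$.

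Concretely the steps would be: (i) rephrase membership in $\cP_{k,d}$ as $(\lfloor{(n+rj)^\alpha}\rfloor)_{j}$ interpolated by a degree-$\le d$ rational polynomial, i.e. $\Delta_r^{d+1}\lfloor{(n+rj)^\alpha}\rfloor=0$ for all valid $j$, and re-express this via the $\cC_{k,d+1}$ box condition on the vector whose $i$-th coordinate encodes the $i$-th finite difference of $(n+rj)^\alpha$ at $n$; (ii) for the lower bound, restrict to the single value $r=1$ (or any fixed small $r$) and invoke Theorem~\ref{main00} to get $\gg \mu(\cC_{k,d+1})\,N \gg N \gg N^{2-\alpha/(d+1)}$ once $2-\alpha/(d+1)\le 1$, which holds since $\alpha>d\ge 1$ gives $\alpha/(d+1)>d/(d+1)$ — wait, that only gives $2-\alpha/(d+1)<2-d/(d+1)=(d+2)/(d+1)$, which can exceed $1$; so the honest lower bound instead sums the contributions of all $r\le r_0$, using the uniform (in $r$) version of Theorem~\ref{main00} with an error term that is $o(N)$ uniformly for $r\le r_0=N^{(d+1-\alpha)/(d+1)}$, obtained from the Hardy-field/Weyl-sum machinery behind Theorems~\ref{main0}--\ref{main2}, to get a main term $\asymp \mu(\cC_{k,d+1})\,N\,r_0 \asymp N^{2-\alpha/(d+1)}$; (iii) for the upper bound, split the $r$-sum at $r_0$, bound the small-$r$ part by $N\cdot r_0$ trivially, and for $r>r_0$ use the non-vanishing/spacing of the $(d{+}1)$-st difference — the derivative of $x\mapsto \Delta_r^{d+1}(x+rj)^\alpha$-type quantity is $\gg r^{d+1} N^{\alpha-d-1}$ on $[1,N]$ — to see that the $n\in[1,N]$ satisfying the box condition form $O(N r^{-(d+1)} N^{d+1-\alpha})$ points, then sum the tail.

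The main obstacle I expect is step (iii) for the range $r>r_0$: one needs an \emph{upper} bound on the number of $n\le N$ for which the fractional-part vector $\bigl(\{\,\binom{j}{i}\text{-weighted differences of }(n+rj)^\alpha\,\}\bigr)$ lands in the box $\cC_{k,d+1}$, uniformly over this wide range of $r$, and this requires controlling the relevant exponential sums (or, more elementarily, a monotonicity/convexity argument showing the governing function $g_{r}(n)$ increases by roughly $r^{d+1}N^{\alpha-d-1}$ per unit step so that hits of a unit interval are spaced $\gg (r^{d+1}N^{\alpha-d-1})^{-1}$ apart). Making that spacing argument rigorous — in particular handling the fact that it is a \emph{vector} of fractional parts and that the single coordinate $\Delta_r^{d+1}(n+rj)^\alpha$ is the decisive one (it is the only one whose integer part can change on the relevant scale, while the lower-order differences are essentially frozen) — is the delicate part, but it is exactly the kind of estimate supplied by the Frantzikinakis--Wierdl framework and by the discrepancy bound underlying Theorem~\ref{main2}, so the remaining work is bookkeeping of the two geometric sums over $r$ and checking that both endpoints produce the same exponent $2-\alpha/(d+1)$.
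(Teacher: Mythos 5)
Your overall architecture is the same as the paper's: reduce membership in $\cP_{k,d}$ to a fractional-part/box condition coming from the Taylor expansion of $(n+rj)^\alpha$, identify the threshold $r_0\asymp N^{(d+1-\alpha)/(d+1)}$ below which the Taylor remainder $\asymp r^{d+1}N^{\alpha-d-1}$ is negligible, get $\asymp N$ admissible $n$ for each $r\le r_0$, and show that larger $r$ contribute essentially nothing; both endpoints give $N\cdot r_0\asymp N^{2-\alpha/(d+1)}$. (You also correctly catch that restricting to $r=1$ cannot give the lower bound, since $2-\alpha/(d+1)>1$ always.) However, the key input you invoke for the lower bound --- a version of Theorem~\ref{main00} with an error term $o(N)$ \emph{uniformly} in $r\le r_0=N^{(d+1-\alpha)/(d+1)}$ --- is a genuine gap: the equidistribution behind Theorem~\ref{main0} is Boshernitzan's qualitative criterion (Proposition~\ref{ud-Hardy}), which carries no rate at all, and the quantitative discrepancy bounds of Section~\ref{discrepancy} exist only for $d=1$. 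For general $d$ no uniform-in-$r$ statement is available from the cited machinery, and proving one would require nontrivial exponential-sum estimates for $\sum_n e(h_0f(n)+h_1rf'(n)+\cdots+h_dr^df^{(d)}(n)/d!)$ uniform in $r$. The paper avoids this entirely by proving a \emph{jointly averaged} statement: Lemma~\ref{lem:Weyl} shows the family $\{\mathbf a(n,r)\}$ equidistributes over the whole box $[1,N]\times[1,R]$, which follows from a soft van der Corput/Weyl differencing induction in the $r$-variable (Lemma~\ref{lem:exp-sum}) and is all that the sum over $r\le r_0$ requires. Your framework accommodates this substitution, but as written the lower bound rests on an unproved (and unnecessary) uniformity. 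A second, smaller issue on the same side: the admissible range of $r$ really depends on $n$ (the remainder is $r^{d+1}n^{\alpha-d-1}$, large for small $n$), so one must either discard $n\le\delta N$ or use the layered decomposition over $\delta_1<\cdots<\delta_t$ that the paper employs.

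For the upper bound your decisive observation --- that $\Delta_r^{d+1}\lfloor{(n+rj)^\alpha}\rfloor$ cannot vanish once $r^{d+1}n^{\alpha-d-1}$ exceeds an absolute constant, because the floors perturb the true $(d+1)$-st difference by less than $2^{d+1}$ --- is exactly the paper's mechanism (carried out there via a monotonicity argument on $\Delta_x^d f$ and the mean value theorem, yielding $r<R(n)\asymp n^{(d+1-\alpha)/(d+1)}$ and hence the bound $\sum_{n\le N}R(n)\asymp N^{2-\alpha/(d+1)}$). But your proposed intermediate estimate, that for $r>r_0$ the number of admissible $n\le N$ is $\asymp N^{d+2-\alpha}r^{-(d+1)}$, is neither justified by the "spacing" heuristic you give (a function increasing by $\lambda$ per step hits a fixed unit interval mod $1$ about $N\min\{1,\mu\}$ times, not $N\lambda^{-1}$ times) nor needed: above the threshold the count is identically zero, and between $r_0$ and the constant multiple of $r_0$ where vanishing sets in, the trivial bound $N$ suffices. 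I would drop that intermediate regime and its tail sum altogether.
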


Since the number of $k$-APs contained in $[1,N]_{\mathbb{Z}}$ is about $N^2/2(k-1)$, 
the asymptotic density of the set in \eqref{eq02} is zero.
We give explicit values of $A_{\alpha,k}$ and $B_{\alpha,k}$ in Section~\ref{proofs}.

\textit{Related work on Piatetski-Shapiro sequences.}---%
Piatetski-Shapiro \cite{Piatetski-Shapiro} proved that 
for every $\alpha\in(1,12/11)$, $\mathrm{PS}(\alpha)$ contains infinitely many prime numbers.
For non-integral $\alpha>1$, prime numbers of the form $\lfloor{n^\alpha}\rfloor$ are called \textit{Piatetski-Shapiro primes}.
It is known that the range $(1,12/11)$ can be improved to $(1,243/205)$ \cite{Rivat-Wu}. 
Actually, Piatetski-Shapiro \cite{Piatetski-Shapiro} proved a stronger statement, namely, 
the prime number theorem on Piatetski-Shapiro sequences: 
for all $\alpha\in(1,12/11)$, 
\[
\#\{ n\in[1,x]_{\mathbb{Z}} : \lfloor{n^\alpha}\rfloor\text{ is prime} \}
\sim \frac{x}{\alpha\log x} \quad (x\to\infty).
\]
Rivat and Sargos \cite{Rivat-Sargos} improved the range $(1,12/11)$ to $(1,2817/2426)$.
Similar formulas are also known for square-free $\lfloor{n^\alpha}\rfloor$ \cite{Baker,Cao-Zhai1,Cao-Zhai2}, 
cube-free $\lfloor{n^\alpha}\rfloor$ \cite{Deshouillers2}, 
and $\lfloor{n^\alpha}\rfloor \equiv c\bmod{m}$ \cite{Deshouillers1,Morgenbesser} 
when $\alpha$ lies in certain ranges.
Also, Frantzikinakis and Wierdl \cite{FW} showed the following statement: 
Let $d\in\mathbb{N}$ and $\alpha\in(d,d+1)$, and let $k\ge d+1$ and $r\ge1$ be integers; 
Then, for every sufficiently large $m\in\mathbb{N}$, 
there exists a polynomial $p(x)\in\mathbb{Z}[x]$ of degree at most $d-1$ such that 
$\{r(mn^d+p(n)) : n\in[0,k)_{\mathbb{Z}}\} \subset \mathrm{PS}(\alpha)$.

In general, if a subset $A$ of $\mathbb{N}$ has \textit{positive upper density}, i.e., 
\[
\limsup_{N\to\infty} \frac{\#(A\cap[1,N])}{N} > 0,
\]
then the famous theorem by Szemer\'edi \cite{szemeredi} guarantees that 
the set $A$ contains arbitrarily long APs.
However, the asymptotic density of $\mathrm{PS}(\alpha)$ is zero for every $\alpha>1$, 
and thus one cannot directly apply Szemer\'edi's theorem to any Piatetski-Shapiro sequences.
Nevertheless, the set $\{\lfloor{n^\alpha}\rfloor : n\in A\}$ contains arbitrarily long APs 
for every $\alpha\in(1,2)$ and every $A\subset\mathbb{N}$ with positive upper density \cite{SY}.
In this way, it has been studied whether a subset of $\mathbb{N}$ with asymptotic density zero contains long APs or not.
The set of all prime numbers is a famous one that contains arbitrarily long APs and has asymptotic density zero \cite{GreenTao1}.
Recently, some researchers studied APs of Piatetski-Shapiro primes with fixed exponent.
Mirek \cite{Mirek} proved that for every $\alpha\in(1,72/71)$, 
the set of all Piatetski-Shapiro primes with exponent $\alpha$ contains infinitely many $3$-APs. 
Li and Pan \cite{Li-Pan} claimed that for every integer $k\ge3$, 
there exists $\alpha_k>1$ such that for every $\alpha\in(1,\alpha_k)$ 
the set of all Piatetski-Shapiro primes with exponent $\alpha$ contains infinitely many $k$-APs.

Looking at $3$-APs in the viewpoint of Diophantine equations, 
one can characterize a $3$-AP $\{x<y<z\}$ by the equation $x+z=2y$.
Hence, for every $\alpha\in(1,2)$, the equation  $x+z=2y$ has infinitely many solutions 
even if $x<y<z$ are elements of $\mathrm{PS}(\alpha)$.
Thus, it is natural to consider another Diophantine equation.
Glasscock \cite{Glasscock} mentioned that for every $\alpha\in(1,2)$, 
the equation $x+y=z$ has infinitely many solutions in $\mathrm{PS}(\alpha)$.
Also, he showed that 
if the equation $y=ax+b$ with real $a>b\ge0$ has infinitely many solutions in $\mathbb{N}$, 
then, for Lebesgue-a.e.\ $\alpha\in(1,2)$, 
the equation $y=ax+b$ has infinitely many solutions in $\mathrm{PS}(\alpha)$ \cite{Glasscock}.

\section{Main results and Hardy fields}\label{results}

First, let us define asymptotic notations.
Suppose that for some $x_0>0$, 
complex-valued functions $f$, $f_1$ and $f_2$ and positive-valued functions $g$, $g_1$ and $g_2$ 
are defined on the interval $[x_0,\infty)$.
We write 
\begin{itemize}
	\item
	$f_1(x)=f_2(x)+O(g(x))$ as $x\to\infty$ 
	if there exists $C>0$ such that $|f_1(x)-f_2(x)|\le Cg(x)$ for every sufficiently large $x>0$;
	\item
	$g_1(x) \asymp g_2(x)$ as $x\to\infty$ 
	if $g_1(x)\ll g_2(x)$ and $g_2(x)\ll g_1(x)$ as $x\to\infty$;
	\item
	$g_1(x)\prec g_2(x)$ as $x\to\infty$ 
	if $g_2(x)/g_1(x)$ diverges to positive infinity as $x\to\infty$.
\end{itemize}
The symbol ``$x\to\infty$'' is often omitted.
Also, we often use the following non-asymptotic notation: 
\begin{itemize}
	\item
	$f(x)\ll g(x)$ for all $x\ge x_0$ if there exists $C>0$ such that $|f(x)|\le Cg(x)$ for all $x\ge x_0$.
\end{itemize}
The above constants $C$ are called implicit constants.
When implicit constants depend on parameters $a_1,\ldots,a_n$, 
we often write $f_1(x)=f_2(x)+O_{a_1,\ldots,a_n}(g(x))$, 
$f(x) \ll_{a_1,\ldots,a_n} g(x)$, and $g_1(x) \asymp_{a_1,\ldots,a_n} g_2(x)$ 
to emphasize the dependence.

Next, we describe \textit{Hardy fields} 
which are convenient to extend Piatetski-Shapiro sequences to more general ones.
Let $\cB$ be the set of all real-valued functions on intervals $[x_0,\infty)$, 
where the real numbers $x_0$ depend on the functions.
The set $\cB$ forms a ring under the induced addition and multiplication 
by the following equivalence relation: 
two functions $f_1,f_2\in\cB$ are equivalent to each other 
if and only if there exists $x'_0\in\mathbb{R}$ such that $f_1(x)=f_2(x)$ for all $x\ge x'_0$.
Using this equivalence relation, 
we define Hardy fields as follows.

\begin{definition}
	A subfield of the ring $\cB$ closed under differentiation is called a Hardy field.
	We denote by $\cH$ the union of all Hardy fields.
\end{definition}

The notion of Hardy fields was first introduced by Bourbaki \cite{Bourbaki}, 
and has been used in analysis, e.g., 
differential equations \cite{Bo2,Bo3,Bo4,Ro1,Ro2}, difference and functional equations \cite{Bo5,Bo6}, 
and uniform distribution modulo $1$ \cite{Bo1,BKS,Frantzikinakis}.
The set $\cH$ is so rich that $\cH$ contains the set $\cLE$ of all \textit{logarithmico-exponential functions}.
A logarithmico-exponential function, which was introduced by Hardy \cite{Hardy1,Hardy2}, 
is defined by a finite combination of 
the ordinary algebraic symbols (viz.\ $+,-,\times,\div$) and the functional symbols $\log(\cdot)$ and $\exp(\cdot)$ 
operating on a real variable $x$ and on real constants.
For instance, the function $x^\alpha=e^{\alpha\log x}$ belongs to $\cLE$ for all $\alpha\in\mathbb{R}$.

The function $x^\alpha$ used in Theorems~\ref{main00} and \ref{main01} 
is generalized to a function $f\in\cH$ with $x^d\log x \prec f(x) \prec x^{d+1}$.
Such a function $f$ satisfies that $f'(x)\ge1$ for every sufficiently large $x>0$, 
since the relation $f(x)\succ x^d\log x$ implies $f'(x)\succ x^{d-1}\log x$ (see Section~\ref{uni-dist}).
From now on, we assume that a differentiable function $f\colon [n_0,\infty)\to\mathbb{R}$ satisfies $\inf_{x\ge n_0} f'(x)\ge1$ 
in order to make the sequence $(\lfloor{f(n)}\rfloor)_{n=n_0}^\infty$ an increasing sequence.
However, this assumption is not essential in any proofs of theorems.

\begin{theorem}\label{main0}
	Let $n_0,d\in\mathbb{N}$, 
	and let $f\colon [n_0,\infty)\to\mathbb{R}$ be a differentiable function in $\cH$ 
	satisfying that 
	\begin{itemize}
		\item[(a1)]
		$x^d\log x \prec f(x) \prec x^{d+1}$;
		\item[(a2)]
		$\inf_{x\ge n_0} f'(x)\ge1$.
	\end{itemize}
	Then, for all integers $k\ge d+2$ and $r\ge1$, 
	\begin{equation}
		\lim_{N\to\infty} \frac{1}{N}
		\#\{ n\in[n_0,N]_{\mathbb{Z}} : (\lfloor{f(n+rj)}\rfloor)_{j=0}^{k-1}\in\cP_{k,d} \}
		= \mu(\cC_{k,d+1}), \label{eq:main0}
	\end{equation}
	where $\mu$ denotes the Lebesgue measure on $\mathbb{R}^{d+1}$ and 
	the convex set $\cC_{k,d+1}$ of $\mathbb{R}^{d+1}$ is defined as \eqref{Ckd}.
	Also, $\mu(\cC_{k,d+1})$ is bounded below by $1/\prod_{i=1}^d \binom{k-1}{i}$.
\end{theorem}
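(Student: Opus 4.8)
The plan is to reduce the limit in \eqref{eq:main0} to a joint equidistribution statement for the fractional parts $\{f(n)\},\{f(n+r)\},\ldots,\{f(n+rd)\}$, then identify the resulting density with $\mu(\cC_{k,d+1})$, and finally estimate $\mu(\cC_{k,d+1})$ from below directly. First I would rewrite the membership $(\lfloor f(n+rj)\rfloor)_{j=0}^{k-1}\in\cP_{k,d}$ as a condition on fractional parts. A strictly increasing integer sequence lies in $\cP_{k,d}$ precisely when each term equals the value, at the corresponding index, of the degree-$\le d$ polynomial interpolating its first $d+1$ terms; writing that interpolant by Newton's forward difference formula this reads
\[
\lfloor f(n+rj)\rfloor = \sum_{i=0}^d\binom{j}{i}\,\Delta_r^i\lfloor f\rfloor(n)\qquad(0\le j\le k-1).
\]
Substituting $\lfloor f\rfloor=f-\{f\}$ and using that Newton's formula is \emph{exact} for the integer sequence $l\mapsto f(n+rl)$, one checks that this is equivalent to
\[
0\le \sum_{i=0}^d\binom{j}{i}\,v_i(n)+E_j(n) <1\qquad(0\le j\le k-1),
\]
where $v_i(n):=\Delta_r^i\{f\}(n)$ and $E_j(n):=\sum_{i=d+1}^{j}\binom{j}{i}\,\Delta_r^i f(n)$. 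The essential point is that $E_j(n)\to 0$ as $n\to\infty$ for each fixed $j\le k-1$: by the Hardy-field calculus of Section~\ref{uni-dist}, $f\prec x^{d+1}$ forces $f^{(d+1)}(x)\to 0$, hence $\Delta_r^i f(n)\to 0$ for all $i\ge d+1$ by the iterated mean value theorem. Thus, up to an $o(1)$ perturbation of the defining half-open slabs, the condition says exactly that $(v_0(n),\ldots,v_d(n))\in\cC_{k,d+1}$.

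Next I would pass to equidistribution. The linear map $T\colon(\theta_0,\ldots,\theta_d)\mapsto\bigl(\sum_{l=0}^i(-1)^{i-l}\binom{i}{l}\theta_l\bigr)_{i=0}^d$ is unipotent, hence Lebesgue-measure preserving, and its inverse sends $(y_i)_{i=0}^d$ to $\bigl(\sum_{i=0}^d\binom{j}{i}y_i\bigr)_{j=0}^d$; since $k\ge d+2$, the inequalities defining $\cC_{k,d+1}$ contain those (for $j=0,\ldots,d$) defining $T([0,1)^{d+1})$, so $\cC_{k,d+1}\subseteq T([0,1)^{d+1})$. Granting that $\bigl(\{f(n)\},\{f(n+r)\},\ldots,\{f(n+rd)\}\bigr)_{n}$ is equidistributed in $[0,1)^{d+1}$, its image $(v_0(n),\ldots,v_d(n))$ under $T$ is equidistributed in $T([0,1)^{d+1})$ with respect to $\mu$; since $\partial\cC_{k,d+1}$ has $\mu$-measure zero and $E_j(n)\to0$, a standard sandwiching argument then yields that the limit in \eqref{eq:main0} equals $\mu\bigl(\cC_{k,d+1}\cap T([0,1)^{d+1})\bigr)=\mu(\cC_{k,d+1})$.

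The remaining and, I expect, hardest step is the joint equidistribution. By Weyl's criterion it suffices to prove, for every $(c_l)_{l=0}^d\in\mathbb{Z}^{d+1}\setminus\{0\}$, that $g(n):=\sum_{l=0}^d c_l f(n+rl)$ is uniformly distributed modulo $1$. Here $g$ lies in a Hardy field, and the hypotheses $x^d\log x\prec f\prec x^{d+1}$ are calibrated precisely for this: if $\sum_l c_l\ne0$ then $g$ has the same growth as $f$, while if $\sum_l c_l=0$, factoring $\sum_l c_l z^l=(z-1)^s\tilde c(z)$ with $\tilde c(1)\ne0$ and $1\le s\le d$ gives $g\sim \tilde c(1)\,r^s f^{(s)}$, and iterating the implication ``$h\succ x^a\log x\Rightarrow h'\succ x^{a-1}\log x$'' yields $x^{d-s}\log x\prec g\prec x^{d-s+1}$; in every case $g$ grows strictly faster than $\log x$, strictly slower than a fixed power of $x$, and is not within a bounded multiple of $\log x$ of any real polynomial, so Boshernitzan's equidistribution criterion for Hardy fields (or the corresponding results prepared in Section~\ref{uni-dist}) applies. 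The care needed here lies exactly in verifying these growth statements and in arranging that all the translates $f(\cdot+rl)$ lie in a common Hardy field.

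Finally, for $\mu(\cC_{k,d+1})\ge 1/\prod_{i=1}^d\binom{k-1}{i}$ I would estimate the volume directly, locating inside $\cC_{k,d+1}$ an explicit region — a parallelepiped, or a union of simplices adapted to the inequalities with the largest indices $j$ — of the stated volume; this part is self-contained and independent of the equidistribution input, and is the source of the observation that the bound is sharp only when $d=1$.
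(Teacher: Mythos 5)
Your overall architecture matches the paper's: rewrite membership in $\cP_{k,d}$ as a fractional-part condition with an $o(1)$ error coming from $f^{(d+1)}\to0$, reduce to joint equidistribution of a $(d+1)$-dimensional vector, identify the limit with $\mu(\cC_{k,d+1})$ via a unipotent change of variables, and prove the volume bound by exhibiting an explicit parallelepiped inside $\cC_{k,d+1}$ (the paper's Lemma~\ref{conv-set-lb} uses exactly the region cut out by $0\le\sum_{i=0}^{l}\binom{k-1}{i}y_i<1$, i.e.\ the ``largest index $j=k-1$'' you allude to; note the containment $\cC'_{k,d+1}\subset\cC_{k,d+1}$ still needs the convex-combination computation, which you only gesture at). The one substantive difference is that you equidistribute the translate vector $\bigl(\{f(n)\},\{f(n+r)\},\ldots,\{f(n+rd)\}\bigr)$, whereas the paper equidistributes the derivative vector $\bigl(f(n),f'(n),\ldots,f^{(d)}(n)/d!\bigr)$ and converts to binomial coordinates with a Stirling-number matrix.

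That difference is where your proof has a genuine gap, and you half-acknowledge it yourself. Weyl's criterion asks you to show that $g(x)=\sum_{l=0}^{d}c_lf(x+rl)$ is uniformly distributed mod $1$, and you propose to do this by applying Boshernitzan's criterion (Proposition~\ref{ud-Hardy}) to $g$. But Proposition~\ref{ud-Hardy} requires $g\in\cH$, and membership of $f$ in a Hardy field does \emph{not} automatically place its translates $f(\cdot+rl)$, or integer combinations of them, in any Hardy field: $\cH$ is closed under differentiation, not under translation (only $\cLE$ is obviously translation-closed). This is precisely why the paper formulates condition (A2) in terms of derivatives: $h_0f+h_1f'+\cdots+h_df^{(d)}/d!$ manifestly lies in the Hardy field containing $f$, so Proposition~\ref{ud-Hardy} applies with the growth analysis $x^{d-i}\log x\prec f^{(i)}(x)\prec x^{d+1-i}$. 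Your argument can be repaired with a tool you already use elsewhere: Taylor's theorem gives $f(n+rl)=\sum_{i=0}^{d}\frac{(rl)^i}{i!}f^{(i)}(n)+o(1)$ uniformly in $l\le d$, so $g(n)=\sum_{i=0}^{d}b_if^{(i)}(n)+o(1)$ with $b_i=\frac{r^i}{i!}\sum_lc_ll^i$; your factorization $\sum_lc_lz^l=(z-1)^s\tilde c(z)$ shows the leading nonzero coefficient is $b_s$, the combination $\sum_ib_if^{(i)}$ does lie in $\cH$, your growth estimates then verify Boshernitzan's criterion for it, and an $o(1)$ perturbation does not affect uniform distribution. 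Once this substitution is made, your proof closes and is essentially the paper's.
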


\begin{theorem}\label{main1}
	Let $n_0,d\in\mathbb{N}$, 
	and let $f\colon [n_0,\infty)\to\mathbb{R}$ be the same as Theorem~$\ref{main0}$.
	Then, for every integer $k\ge d+2$, 
	\begin{equation}
	\begin{split}
		&\quad \#\{ P\subset[n_0,N]_{\mathbb{Z}} : P\in\cP_{k,1},\ (\lfloor{f(n)}\rfloor)_{n\in P}\in\cP_{k,d} \}\\
		&\asymp_{c(\cdot),k,d} Nf^{(d+1)}(N)^{-1/(d+1)} \quad (N\to\infty).
	\end{split}\label{eq:main1}
	\end{equation}
\end{theorem}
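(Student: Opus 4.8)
The plan is to estimate the quantity on the left of \eqref{eq:main1} by splitting the progressions $P$ according to their common difference. Every $P\in\cP_{k,1}$ with $P\subset[n_0,N]_{\mathbb{Z}}$ is uniquely of the form $P=\{n+rj:j\in[0,k)_{\mathbb{Z}}\}$ with $n\ge n_0$, $r\ge1$ and $n+(k-1)r\le N$, so, writing
\[
C_r(N):=\#\{n\in[n_0,N-(k-1)r]_{\mathbb{Z}}:(\lfloor f(n+rj)\rfloor)_{j=0}^{k-1}\in\cP_{k,d}\},
\]
the left side of \eqref{eq:main1} equals $\sum_{r\ge1}C_r(N)$. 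Put $F_N:=f^{(d+1)}(N)^{-1/(d+1)}$ (we may assume $f^{(d+1)}>0$ eventually; otherwise use $|f^{(d+1)}|$). From (a1) and closure of $\cH$ under differentiation one has, as in Section~\ref{uni-dist}, that $f^{(i)}\to0$ for $i\ge d+1$, that $|f^{(d+1)}|$ is eventually positive, decreasing and slowly varying with $|f^{(d+2)}(x)|\ll|f^{(d+1)}(x)|/x$ (and similarly for higher derivatives), and that $1\prec F_N\preceq N^{1/(d+1)}$; in particular $Nf^{(d+1)}(N)^{-1/(d+1)}=NF_N$ and it suffices to prove $\sum_{r\ge1}C_r(N)\asymp_{c(\cdot),k,d}NF_N$.

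For the upper bound I would use the $(d+1)$-st difference operator. If $(\lfloor f(n+rj)\rfloor)_{j=0}^{k-1}\in\cP_{k,d}$ then, since $k\ge d+2$, applying $\Delta_1^{d+1}$ at $0$ to this integer sequence gives $\sum_{l=0}^{d+1}(-1)^{d+1-l}\binom{d+1}{l}\lfloor f(n+rl)\rfloor=0$; splitting off the fractional parts and using $\Delta_r^{d+1}f(n)=r^{d+1}\int_{[0,1]^{d+1}}f^{(d+1)}(n+r(t_1+\dots+t_{d+1}))\,dt$ yields $|\Delta_r^{d+1}f(n)|\le 2^{d+1}$, hence $r^{d+1}|f^{(d+1)}(n+r(d+1))|\le 2^{d+1}$ because $|f^{(d+1)}|$ is eventually decreasing. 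Since $n+r(d+1)\le n+(k-1)r\le N$, monotonicity then forces $r^{d+1}|f^{(d+1)}(N)|\le 2^{d+1}$, i.e. $r\le 2F_N$ (the boundedly many small $n$ for which monotonicity has not yet set in contribute only $O_f(N)=O_f(NF_N)$ pairs in total). Therefore $\sum_{r\ge1}C_r(N)=\sum_{1\le r\le 2F_N}C_r(N)\le\sum_{1\le r\le 2F_N}N\ll NF_N$.

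For the lower bound I would show that already the pairs with $1\le r\le cF_N$ and $n$ in a positive proportion of $(N/2,N-(k-1)r]_{\mathbb{Z}}$ suffice, where $c=c(f,k,d)>0$ is a small constant fixed at the end; on this range $n+r(d+1)\le N$, $n>N/2$, and the admissible $n$ fill an interval of length $\asymp N$. The exact finite-difference identity $f(n+rj)=\sum_{i=0}^{k-1}\binom{j}{i}\Delta_r^i f(n)$ $(j\in[0,k)_{\mathbb{Z}})$ together with the slow-variation bounds for $f^{(d+1)},f^{(d+2)},\dots$ on $[N/2,N]$ shows that $E_j(n,r):=\sum_{i=d+1}^{k-1}\binom{j}{i}\Delta_r^i f(n)$ satisfies $|E_j(n,r)|\le\varepsilon$ for all $j$, where $\varepsilon$ is a suitable constant multiple of $c^{d+1}$ (the dominant term is $\asymp r^{d+1}|f^{(d+1)}(N)|\le c^{d+1}\sup_{[N/2,N]}|f^{(d+1)}|/|f^{(d+1)}(N)|\ll_{c(\cdot)}c^{d+1}$, the higher ones being smaller by factors $\ll r/N=o(1)$). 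Consequently, if $(\{\Delta_r^i f(n)\})_{i=0}^d$ lies in the shrunken convex body
\[
\cC_{k,d+1}^{\varepsilon}:=\Bigl\{(y_i)_{i=0}^d\in\mathbb{R}^{d+1}:2\varepsilon\le\textstyle\sum_{i=0}^d\binom{j}{i}y_i\le 1-2\varepsilon\ \ (\forall j\in[0,k)_{\mathbb{Z}})\Bigr\}\ \subset\ \cC_{k,d+1},
\]
then, with $m_i:=\lfloor\Delta_r^i f(n)\rfloor\in\mathbb{Z}$, one has $f(n+rj)-\sum_{i=0}^d\binom{j}{i}m_i=\sum_{i=0}^d\binom{j}{i}\{\Delta_r^i f(n)\}+E_j(n,r)\in[\varepsilon,1-\varepsilon]$ for every $j\in[0,k)_{\mathbb{Z}}$, so $\lfloor f(n+rj)\rfloor=\sum_{i=0}^d\binom{j}{i}m_i$ there and hence $(\lfloor f(n+rj)\rfloor)_{j=0}^{k-1}\in\cP_{k,d}$. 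Since $\mu(\cC_{k,d+1})>0$ by Theorem~\ref{main0} and $\mu(\cC_{k,d+1}^{\varepsilon})\uparrow\mu(\cC_{k,d+1})$ as $\varepsilon\downarrow0$, I would fix $c$ so small that $\mu(\cC_{k,d+1}^{\varepsilon})\ge\tfrac12\mu(\cC_{k,d+1})$. It then remains only to note that, by the quantitative equidistribution modulo $1$ of $n\mapsto(\Delta_r^i f(n))_{i=0}^d$ established in Section~\ref{uni-dist} (the input already used for Theorems~\ref{main0} and \ref{main2}, here applied uniformly in $r$ over $r\le cF_N$), the number of $n\in(N/2,N-(k-1)r]_{\mathbb{Z}}$ with $(\{\Delta_r^i f(n)\})_{i=0}^d\in\cC_{k,d+1}^{\varepsilon}$ is $\tfrac12\mu(\cC_{k,d+1}^{\varepsilon})N+o(N)\gg_{c(\cdot),k,d}N$ for all large $N$. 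Summing over $1\le r\le cF_N$ yields $\sum_{r\ge1}C_r(N)\gg_{c(\cdot),k,d}NF_N$, which completes the proof.

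The step I expect to be the main obstacle is the last one of the lower bound: it requires the discrepancy of the $(d+1)$-dimensional sequence $(\Delta_r^i f(n))_{i=0}^d$ along $n\le N$ to be $o(1)$ \emph{uniformly for $r$ as large as $cF_N$}, and $F_N$ can be a fixed positive power of $N$ (for instance $F_N\asymp N^{1/(d+1)}$ when $f(x)\asymp x^d(\log x)^2$). One therefore has to trace the dependence on $r$ through the Weyl and van der Corput estimates underlying Section~\ref{uni-dist} and verify that it is only polynomial, so that the resulting error $r^{O(1)}N^{-\delta}$ is still $o(1)$ throughout the relevant range of $r$. By contrast, the difference-operator argument for the upper bound, the finite-difference expansion, the volume comparison $\mu(\cC_{k,d+1}^{\varepsilon})\to\mu(\cC_{k,d+1})$, and the slow-variation estimates for the derivatives of $f$ are all routine given the machinery already developed in the paper.
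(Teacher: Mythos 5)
Your upper bound is essentially the paper's (Proposition~\ref{main1'}, limsup part), phrased slightly more cleanly: you apply $\Delta_r^{d+1}$ directly and use the mean value theorem plus the eventual monotonicity of $f^{(d+1)}$ to force $r\ll f^{(d+1)}(N)^{-1/(d+1)}$, whereas the paper runs a telescoped version of the same difference-operator argument; both are sound. The finite-difference expansion and the volume comparison $\mu(\cC_{k,d+1}^{\epsilon})\to\mu(\cC_{k,d+1})$ in your lower bound are also fine (and $\mu(\cC_{k,d+1})>0$ is Lemma~\ref{conv-set-lb}).

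The genuine gap is exactly the step you flag at the end, and it cannot be repaired the way you suggest. You need the discrepancy of $n\mapsto(\Delta_r^i f(n))_{i=0}^d$ on $(N/2,N]$ to be $o(1)$ uniformly for $r$ up to $cF_N$, and you propose to get this by tracing a polynomial $r$-dependence through quantitative Weyl/van der Corput estimates. But for a general $f\in\cH$ with $x^d\log x\prec f\prec x^{d+1}$ the paper has no quantitative exponential-sum input at all: the only equidistribution tool available under the hypotheses of Theorem~\ref{main1} is Boshernitzan's purely qualitative criterion (Proposition~\ref{ud-Hardy}), which supplies no rate whatsoever. (The van der Corput and Sargos--Gritsenko bounds of Section~\ref{discrepancy} are used only for $f(x)=x^\alpha$.) For $f(x)=x^d(\log x)^2$, say, one cannot hope for an $r^{O(1)}N^{-\delta}$ discrepancy bound, so the error does not stay $o(1)$ out to $r\asymp F_N\asymp N^{1/(d+1)}$. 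The paper sidesteps individual-$r$ uniformity entirely: it proves equidistribution of the \emph{two-parameter} family $\{\mathbf{a}(n,r)\}$ over $(n,r)\in[1,N]_{\mathbb{Z}}\times[1,R]_{\mathbb{Z}}$ (Lemma~\ref{lem:Weyl}), established by van der Corput differencing in the $r$-variable (Lemmas~\ref{diff-ineq} and \ref{lem:exp-sum}), which reduces everything to the qualitative equidistribution of the single sequences $(f^{(i)}(n)/i!)_n$ in $n$. Since the lower bound only requires the count of good pairs $(n,r)$ \emph{summed} over $r\le cF_N$, this averaged statement suffices, and no rate or uniformity in $r$ is ever needed. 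Without this (or an equivalent) averaging device, your lower bound does not close.
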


When $d=1$, 
one can apply Theorems~\ref{main0} and \ref{main1} to the following functions: 
\begin{equation}\label{equation=examples}
	x^\alpha,\quad
	x(\log x)^\beta,\quad
	\frac{x^2}{(\log x)^\gamma},\quad
	\frac{x^2}{(\log\log x)^\gamma},
\end{equation}
where $\alpha\in(1,2)$, $\beta>1$ and $\gamma>0$.
Note that all the above functions belong to $\cLE$ and \textit{a fortiori} $\cH$.
Hence, Theorems~\ref{main00} and \ref{main01} are special cases of Theorems~\ref{main0} and \ref{main1}, respectively.
Also, the implicit constants of \eqref{eq:main1} only depend on $c(\cdot)$, $k$ and $d$.
This fact is seen in Section~\ref{proofs} by giving explicit values of the implicit constants.
For special $c(\cdot)$, the explicit values can be simplified, e.g., 
the case when $f(x)=x^\alpha$ with $\alpha\in(d,d+1)$.
For details, see Remarks~\ref{rem:lb} and \ref{rem:ub}.

Finally, let us focus on $\mathrm{PS}(\alpha)$ with $\alpha\in(1,2)$.
Recall that the asymptotic density \eqref{eq01'} is equal to $1/(k-1)$.
However, Corollary~\ref{main00'} does not give us any information about convergence speed.
The convergence speed of \eqref{eq01'} is estimated as follows.

\begin{theorem}\label{main2}
    For all $\alpha\in(1,2)$ and all integers $k\ge3$ and $r\ge1$, 
	\begin{align*}
		&\quad \frac{1}{N}\#\{ n\in[1,N]_{\mathbb{Z}} : (\lfloor{(n+rj)^\alpha}\rfloor)_{j=0}^{k-1}\in\cP_{k,1} \}\\
		&= \frac{1}{k-1} + O_{\alpha,k,r}(F(N)) \quad (N\to\infty),
	\end{align*}
	where 
	\[
	F(x) \coloneqq
	\begin{cases}
		x^{(1-\alpha)/2} & \alpha\in(1,5/4),\\
		x^{(\alpha-3)/14}(\log x)^{1/2} & \alpha\in[5/4,11/6),\\
		x^{(\alpha-2)/6}(\log x)^{1/2} & \alpha\in[11/6,2).
	\end{cases}
	\]
\end{theorem}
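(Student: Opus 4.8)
The plan is to revisit the proof of Corollary~\ref{main00'} (the case $d=1$, $f(x)=x^\alpha$ of Theorem~\ref{main0}) and render every step quantitative. Throughout write $f(x)=x^\alpha$, $e(t)=e^{2\pi it}$, and $\{t\}$ for the fractional part of $t$. The first step is to reduce the counting condition to a condition on two fractional parts. By Taylor's formula, $f(n+rj)-f(n)=rjf'(n)+\rho_{n,j}$ with $0\le\rho_{n,j}\ll_{\alpha,k,r}n^{\alpha-2}$ for $0\le j<k$, so, putting $M_n:=\lfloor rf'(n)\rfloor$ and $u_{n,j}:=f(n+rj)-f(n)+\{f(n)\}$, one has $u_{n,j}-jM_n=\{f(n)\}+j\{rf'(n)\}+O_{\alpha,k,r}(n^{\alpha-2})$. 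Since $(\lfloor(n+rj)^\alpha\rfloor)_{j=0}^{k-1}\in\cP_{k,1}$ exactly when $(\lfloor u_{n,j}\rfloor)_{j=0}^{k-1}$ is an arithmetic progression, and since (for $k\ge3$) the only admissible common differences are $M_n$ and $M_n+1$, this happens if and only if the point $\mathbf x_n:=(\{f(n)\},\{rf'(n)\})$ lies in the polygon
\[
\cE:=\{(y_0,y_1)\in[0,1)^2:y_0+(k-1)y_1<1\ \text{ or }\ y_0+(k-1)y_1\ge k-1\},
\]
unless $\mathbf x_n$ lies within $O_{\alpha,k,r}(n^{\alpha-2})$ of $\partial\cE$ (in which case the remainder $\rho_{n,j}$ decides). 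One checks directly that $\mu(\cE)=1/(k-1)$, consistent with Corollary~\ref{main00'}.

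Next I would turn the count into an equidistribution statement. Summing over $n\le N$ and discarding $O_{\alpha,k,r}(1)$ small values of $n$,
\[
\#\{n\in[1,N]_\mathbb{Z}:(\lfloor(n+rj)^\alpha\rfloor)_{j=0}^{k-1}\in\cP_{k,1}\}=\sum_{n\le N}\mathbf 1_\cE(\mathbf x_n)+O(E(N))+O_{\alpha,k,r}(1),
\]
where $E(N):=\#\{n\le N:\operatorname{dist}(\mathbf x_n,\partial\cE)\ll_{\alpha,k,r}n^{\alpha-2}\}$. Because $\cE$ is a fixed polygon (of constant width in the $y_1$-direction), an Erd\H{o}s--Tur\'an--Koksma-type inequality gives
\[
\sum_{n\le N}\mathbf 1_\cE(\mathbf x_n)=\frac{N}{k-1}+O\Big(\frac NH+\sum_{0<\|\mathbf h\|_\infty\le H}w(\mathbf h)\,|S_N(\mathbf h)|\Big),\quad S_N(\mathbf h):=\sum_{n\le N}e\big(h_1n^\alpha+h_2r\alpha n^{\alpha-1}\big),
\]
with $w(\mathbf h)\ll(\max(1,|h_1|)\max(1,|h_2|))^{-1}$ in general and the weaker bound $w(\mathbf h)\ll|\mathbf h|^{-1}$ along the one slowly decaying edge-normal direction $(1,k-1)$; applying the same estimate dyadically in $n$ bounds $E(N)$ by $N^{\alpha-1}$ plus a dyadic sum of the error terms above. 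Since the exponential-sum contribution will be a power saving in $N$, these dyadic sums are absorbed, and using $N^{\alpha-2}\ll F(N)$ (which holds throughout $(1,2)$ in each regime) one is reduced to choosing $H=H(N,\alpha)$ so that
\[
\frac NH+\sum_{0<\|\mathbf h\|_\infty\le H}w(\mathbf h)\,|S_N(\mathbf h)|\ll_{\alpha,k,r}N\,F(N).
\]

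Finally, I would estimate $S_N(\mathbf h)$ by van der Corput's method, splitting $[1,N]$ dyadically and, on each block $[M,2M]$, comparing the two parts of the phase: it behaves like $h_1x^\alpha$ when $M\gg|h_2|/|h_1|$ (second derivative $\asymp|h_1|M^{\alpha-2}$) and like $h_2r\alpha x^{\alpha-1}$ when $M\ll|h_2|/|h_1|$ (second derivative $\asymp|h_2|M^{\alpha-3}$). The worst case is $h_1=0$, which is entirely of the second type and is precisely the one-dimensional problem of bounding $\sum_{n\le N}e(c\,n^{\alpha-1})$, i.e.\ the discrepancy of $(n^{\alpha-1})$ modulo one. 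For $\alpha$ near $1$ the plain second-derivative test suffices here: it gives $|S_N(0,h_2)|\ll|h_2|^{1/2}N^{(\alpha-1)/2}+|h_2|^{-1/2}N^{(3-\alpha)/2}$, whose weighted sum is $\ll N^{(\alpha-1)/2}H^{1/2}+N^{(3-\alpha)/2}$; together with the $(1,k-1)$-direction terms ($\ll N^{\alpha/2}H^{1/2}$) and the generic $\mathbf h$ (lower order), optimising $H$ yields the bound with $F(N)=N^{(1-\alpha)/2}$ for $\alpha\in(1,5/4)$. For $\alpha\in[5/4,11/6)$ and $\alpha\in[11/6,2)$ the second-derivative test is replaced in the $x^{\alpha-1}$-regime by one, respectively two, applications of the $A$- and $B$-processes of van der Corput's method (the appropriate exponent pairs); identifying the dominant term after summation over $\mathbf h$ and re-optimising $H$ then produces $F(N)=N^{(\alpha-3)/14}(\log N)^{1/2}$ and $F(N)=N^{(\alpha-2)/6}(\log N)^{1/2}$, the $(\log N)^{1/2}$ coming from the $B$-process and the dual sum $\sum_{h\le H}h^{-1}$, and the breakpoints $5/4$ and $11/6$ (equivalently $\alpha-1=1/4$ and $\alpha-1=5/6$) being exactly where a new exponent pair begins to win.

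The main obstacle is this last step: organising the two-dimensional dual sum --- in particular the range where $|h_2|$ is large compared to $|h_1|$, so that the $x^{\alpha-1}$-part of the phase takes over and the discrepancy degrades --- and, in each sub-interval of $(1,2)$, choosing the van der Corput process that yields exactly the exponents and breakpoints in $F$. The earlier steps are routine, the only delicate point being that $\cE$ is a genuine polygon, so one must exploit the decay of its Fourier coefficients rather than invoke a Koksma--Hlawka inequality, and must check that the boundary-layer count $E(N)$ is of size $\ll n^{\alpha-2}$ per $n$ and hence negligible against $N\,F(N)$.
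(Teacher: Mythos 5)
Your overall skeleton matches the paper's: reduce membership in $\cP_{k,1}$ to the condition that $(\{n^\alpha\},\{r\alpha n^{\alpha-1}\})$ lies in a fixed tilted polygon (up to a boundary layer of width $O_{\alpha,k,r}(n^{\alpha-2})$ absorbing the Taylor remainder), control the resulting count by an Erd\H{o}s--Tur\'an--Koksma-type inequality together with derivative tests on $\sum_n e(h_0 n^\alpha+h_1 r\alpha n^{\alpha-1})$, and pass from dyadic blocks to $[1,N]$. However, there are two genuine problems with the way you propose to close the argument. First, the step you treat as a technicality --- an ETK inequality for the tilted polygon $\cE$ with weights $w(\mathbf h)\ll(\max(1,|h_1|)\max(1,|h_2|))^{-1}$ --- is exactly the step the paper does \emph{not} attempt. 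The standard ETK inequality (Lemma~\ref{ETK}) controls only the axis-parallel-box discrepancy $D_N$; to handle the tilted set the paper passes to the isotropic discrepancy $J_N$ and uses the inequality \eqref{isotropic}, $J_N\ll D_N^{1/2}$ in dimension $2$, accepting a square-root loss. Your route instead requires finite-Fourier-support majorants and minorants of the indicator of $\cE$ with $L^1$ error $O(1/H)$ and the stated coefficient decay; this is possible in principle but is a substantive construction that you have not supplied, and it is the crux of your plan rather than a footnote.

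Second, and more seriously, the exponents you claim to obtain are inconsistent with the method you describe. The factors $1/6$ and $1/14$ and the power $(\log N)^{1/2}$ in $F$ arise in the paper precisely from taking the square root (via $J_N\ll D_N^{1/2}$) of the discrepancy bounds $D(N,L)\ll N^{(\alpha-2)/3}\log N+\cdots$ and $N^{(\alpha-3)/7}\log N+\cdots$ of Lemma~\ref{mainlemma}, which come from the van der Corput and Sargos--Gritsenko second- and third-derivative tests for $h_0\neq0$ and from Kusmin--Landau for $h_0=0$. If your direct Fourier treatment of $\cE$ worked, optimising $N/H$ against $H^{1/2}N^{\alpha/2}$ would yield the error $N^{(\alpha-2)/3}$ --- a \emph{stronger} bound than $N^{(\alpha-2)/6}(\log N)^{1/2}$ --- and your dual sum $\sum_{h\le H}h^{-1}$ produces a full $\log N$, never a $(\log N)^{1/2}$. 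Likewise, the breakpoint $11/6$ is not where ``a new exponent pair begins to win'': in the paper it is where the Kusmin--Landau condition $HN^{\alpha-2}\le 1/2$ for the $h_0=0$ frequencies fails once the third-derivative test forces the larger choice $H=\lfloor N^{(3-\alpha)/7}\rfloor$. So either your plan proves something stronger than you state (in which case the unproved polygon-ETK step must be filled in), or the computation has not actually been carried out; as written, the derivation of the stated $F$ is not established.
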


Theorem~\ref{main2} gives an upper bound for the convergence speed of \eqref{eq01'}.
We show an extended statement (Proposition~\ref{mainprop}) in Section~\ref{discrepancy}, 
which can be applied to a short interval $[N,N+L)$.
Theorem~\ref{main2} is derived from the extended statement.

So far, we have stated only asymptotic results.
In general, an asymptotic result does not give the information 
how long an interval containing no numbers $n$ in the set in \eqref{eq01'} is.
Hence, we need a non-asymptotic result in order to know such information.
To state a non-asymptotic result, 
let us define the minimum length $L_{\alpha,k,r}(x)$ as 
\[
L_{\alpha,k,r}(x)
= \min\{ y\ge0 : \exists n\in[x,x+y]_{\mathbb{Z}},\ (\lfloor{(n+rj)^\alpha}\rfloor)_{j=0}^{k-1}\in\cP_{k,1} \}
\]
for $\alpha\in(1,2)$, $x\ge1$, and integers $k\ge3$ and $r\ge1$.
The following theorem gives an upper bound for $L_{\alpha,k,r}(x)$.

\begin{theorem}\label{main3}
	For all $\alpha\in(1,2)$ and all integers $k\ge3$ and $r\ge1$, 
	we have $L_{\alpha,k,r}(x)=O_{\alpha,k,r}(x^{2-\alpha})$ as $x\to\infty$.
\end{theorem}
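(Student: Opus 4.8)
The plan is to show that there is a constant $C=C(\alpha,k,r)>0$ such that every interval $[x,x+Cx^{2-\alpha}]_{\mathbb{Z}}$ with $x$ large contains an $n$ with $(\lfloor(n+rj)^\alpha\rfloor)_{j=0}^{k-1}\in\cP_{k,1}$; since $L_{\alpha,k,r}(x)\le Cx^{2-\alpha}$ then follows, this gives the claimed bound as $x\to\infty$. Write $f(x)=x^\alpha$. The first step isolates a convenient sufficient condition: \emph{if $n$ is large and}
\[
\{f(n)\}\in\Bigl[\tfrac14,\tfrac34\Bigr]\quad\text{and}\quad \bigl\|\Delta_r f(n)\bigr\|_{\mathbb{R}/\mathbb{Z}}<\tfrac1{8(k-1)},
\]
\emph{then $(\lfloor f(n+rj)\rfloor)_{j=0}^{k-1}$ is an arithmetic progression.} Indeed, Taylor expansion gives $f(n+rj)=f(n)+j\,\Delta_r f(n)+E_j(n)$ with $|E_j(n)|\ll_{k,r}n^{\alpha-2}\to0$; taking $b=\lfloor f(n)\rfloor$ and $c$ the integer nearest $\Delta_r f(n)$, the hypotheses yield $0\le f(n+rj)-b-cj<1$ for all $j\in[0,k)_{\mathbb{Z}}$, i.e.\ $\lfloor f(n+rj)\rfloor=b+cj$.

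Second, since $\Delta_r f$ is increasing with $(\Delta_r f)'(t)\asymp_{\alpha,r}t^{\alpha-2}$, over the window $W=[x,x+Cx^{2-\alpha}]_{\mathbb{Z}}$ the value $\Delta_r f$ increases by $\asymp_{\alpha,r}C$, so for $C$ large $W$ contains a full sub-stretch $S=[a,a+M]_{\mathbb{Z}}$, of length $M\asymp_{\alpha,k,r}x^{2-\alpha}$, on which $\|\Delta_r f(n)\|_{\mathbb{R}/\mathbb{Z}}<\tfrac1{8(k-1)}$ throughout and $\Delta_r f$ crosses one integer near the centre of $S$. By the first step it then suffices to produce $n\in S$ with $\{f(n)\}\in[\tfrac14,\tfrac34]$. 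On $S$ the increments obey $\{f(n+1)\}-\{f(n)\}=\Delta_1 f(n)\bmod1$, and $\Delta_1 f$ varies over $S$ by only $\asymp_{\alpha,k,r}1$, so the motion of $\{f(n)\}$ is essentially governed by the ``speed'' $v:=\{\Delta_1 f(a)\}$. If $v$ is not too close to $0$, say $\|v\|_{\mathbb{R}/\mathbb{Z}}\ge x^{(\alpha-2)/4}$, then over the $O(1/\|v\|)\le M$ steps required the speed is essentially constant, $\{f(n)\}$ advances by about $v$ each step, and the orbit $n\mapsto\{f(n)\}$ cannot stay in the arc $(-\tfrac14,\tfrac14)\bmod1$; hence it enters $[\tfrac14,\tfrac34]$. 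If instead $\|v\|_{\mathbb{R}/\mathbb{Z}}<x^{(\alpha-2)/4}$, then because $\Delta_1 f$ increases monotonically across $S$ by $\asymp((k-1)r)^{-1}$ — far more than $x^{(\alpha-2)/4}$ — after a negligibly short initial part the speeds $\{f'(n)\}$ grow to a definite positive size while staying in $(0,\tfrac14)$; consequently $\{f(n)\}$ drifts monotonically over a portion of $S$ of length $\asymp M$ by a total amount $\asymp_{\alpha,k,r}x^{2-\alpha}((k-1)r)^{-2}\to\infty$, so it winds around $\mathbb{R}/\mathbb{Z}$ and in particular meets $[\tfrac14,\tfrac34]$.

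The hard part is this last step: controlling the fractional parts $\{f(n)\}$ on a window as short as $Cx^{2-\alpha}$, where the global equidistribution underlying Corollary~\ref{main00'} gives no direct leverage. The mechanism is the dichotomy above — either the linear term $\Delta_1 f(a)$ is far enough from an integer that the one‑dimensional orbit escapes a half‑circle within $O(1/\|v\|)$ steps, in which case one must verify that the quadratic Taylor remainder, of size $\asymp\|v\|^{-2}x^{\alpha-2}$ over that many steps, is negligible compared with $\|v\|$ (this is exactly why the threshold is a fractional power of $x$), or $\Delta_1 f(a)$ is nearly integral and then the acceleration $f''\asymp x^{\alpha-2}$, accumulated over the critical length $\asymp x^{2-\alpha}$, produces a genuine monotone drift $\gg1$ in $\{f(n)\}$. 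The remaining work — handling a possible integer crossing of $\Delta_1 f$ inside $S$, separating the two regimes cleanly, and tracking the implicit constants so that $C$ depends only on $\alpha,k,r$ — is routine Taylor estimation, but is where the care lies.
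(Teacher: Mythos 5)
Your argument is correct in outline, but it is a genuinely different route from the paper's. The paper deduces Theorem~\ref{main3} from the short-interval discrepancy estimate of Proposition~\ref{mainprop}: it bounds the discrepancy of the two-dimensional sequence $\bigl((n^\alpha,r\alpha n^{\alpha-1})\bigr)_{n=N}^{N+L-1}$ via the Erd\H{o}s--Tur\'an--Koksma inequality together with the first, second and third derivative tests for exponential sums, converts this to isotropic discrepancy by \eqref{isotropic}, and then chooses $L=\lceil 4C^2(k-1)^2N^{2-\alpha}\rceil$ so that the error term $N^{(2-\alpha)/2}/L^{1/2}$ drops below the main term $1/(k-1)$, forcing the count in $[N,N+L)$ to be positive. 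You instead argue dynamically and with no exponential sums: the slow coordinate $\Delta_r f$ sweeps through a full unit interval over a window of length $Cx^{2-\alpha}$, producing a sub-stretch $S$ of length $\asymp x^{2-\alpha}$ on which the common-difference condition holds automatically, and then a rotation-escape/drift dichotomy on the speed $\|\Delta_1 f\|$ forces $\{f(n)\}$ into $[\tfrac14,\tfrac34]$ somewhere in $S$. Both arguments are sound; yours is elementary and self-contained (and close in spirit to the fractional-part tracking in the paper's Appendix~\ref{optimality}), and makes the dependence of the constant on $\alpha,k,r$ transparent, while the paper's route reuses machinery it needs anyway for Theorem~\ref{main2} and yields the stronger quantitative statement that on windows only slightly longer than $N^{2-\alpha}$ the density of good $n$ is already close to $1/(k-1)$. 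The one place where your sketch genuinely needs the care you flag is the boundary behaviour in the escape argument (e.g.\ the landing point of the perturbed orbit sitting exactly at $\tfrac14$, or $\|\Delta_1 f(a)\|$ near $\tfrac12$ making the orbit nearly $2$-periodic); these are handled by shrinking the target arc relative to the arc used in your first step, and do not hide an obstruction.
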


At glance, the growth rate $O_{\alpha,k,r}(x^{2-\alpha})$ is strange 
because it becomes smaller when $\alpha$ increases.
However, for all $\alpha\in(1,2)$ and all integers $k\ge4$ and $r\ge1$, 
the growth rate $O_{\alpha,k,r}(x^{2-\alpha})$ is best in a certain meaning.
When $k=3$, we expect that $L_{\alpha,3,r}(x)=O_{\alpha,r}(x^{1-\alpha/2})$ for all $\alpha\in(1,2)$ and $r\in\mathbb{N}$.
For details, see Appendix~\ref{optimality}.

\section{Uniform distribution modulo $1$}\label{uni-dist}

To prove main theorems, 
uniform distribution modulo $1$ is a key point.
Unless there is confusion, $\{x\}$ denotes the fractional part $x-\lfloor{x}\rfloor$ of $x\in\mathbb{R}$.
Also, for $\mathbf{x}=(x_1,x_2,\ldots,x_d)\in \mathbb{R}^d$, 
define the notation 
\[
\{\mathbf{x}\} = (\{x_1\}, \{x_2\}, \ldots, \{x_d\}).
\]
Let $(\mathbf{x}_n)_{n=1}^\infty$ be a sequence of $\mathbb{R}^d$.
We say that $(\mathbf{x}_n)_{n=1}^\infty$ is \textit{uniformly distributed modulo $1$} 
if every convex set $\cC\subset[0,1)^d$ satisfies that 
\begin{equation}
	\lim_{N\to\infty}\frac{1}{N}
	\#\bigl\{ n\in[1,N]_{\mathbb{Z}} : \{\mathbf{x}_n\}\in\cC \bigr\}
	= \mu(\cC), \label{eq13}
\end{equation}
where $\mu$ denotes the Lebesgue measure on $\mathbb{R}^d$.
It is known that $(\mathbf{x}_n)_{n=1}^\infty$ is uniformly distributed modulo $1$ 
if and only if 
\begin{equation}\label{Weyl}
	\lim_{N\to\infty} \frac{1}{N}\sum_{n=1}^N e(\langle{\mathbf{h},\mathbf{x}_n}\rangle) = 0
\end{equation}
for all non-zero $\mathbf{h}\in\mathbb{Z}^d$, 
where the function $e(x)$ defined as $e^{2\pi ix}$, 
and $\langle{\cdot,\cdot}\rangle$ denotes the standard inner product on $\mathbb{R}^d$. 
One can also say that 
$(\mathbf{x}_n)_{n=1}^\infty$ is uniformly distributed modulo $1$ if and only if 
$(\langle{\mathbf{h},\mathbf{x}_n}\rangle)_{n=1}^\infty$ is uniformly distributed modulo $1$ for all non-zero $\mathbf{h}\in\mathbb{Z}^d$.
Due to this equivalence, the following facts hold: 
if a sequence $(\mathbf{x}_n)_{n=1}^\infty=\bigl( (x_{1,n},\ldots,x_{d,n}) \bigr)_{n=1}^\infty$ is uniformly distributed modulo $1$, 
then 
\begin{itemize}
	\item
	so is the sequence $(\mathbf{x}_n\mathbf{A})_{n=1}^\infty$ 
	for every integer matrix $\mathbf{A}$ of order $d$ and rank $d$;
	\item
	so is the sequence $(x_{i,n})_{n=1}^\infty$ for every $i\in[1,d]_{\mathbb{Z}}$.
\end{itemize}
For details, see \cite[Theorem~6.2]{KN}.

To investigate uniform distribution modulo $1$, 
we need to estimate exponential sums in general.
However, if a function $f\in\cH$ is \textit{subpolynomial}, i.e., $f(x)\ll x^n$ for some $n\in\mathbb{N}$, 
then it is easy to investigate whether 
the sequence $(f(n))_{n=n_0}^\infty$ is uniformly distributed modulo $1$.

\begin{proposition}[Boshernitzan \cite{Bo1}]\label{ud-Hardy}
	Let $n_0\in\mathbb{N}$.
	For every subpolynomial $f\in\cH$ defined on the interval $[n_0,\infty)$, 
	the following conditions are equivalent.
	\begin{itemize}
		\item
		$(f(n))_{n=n_0}^\infty$ is uniformly distributed modulo $1$.
		\item
		For every polynomial $p(x)\in\mathbb{Q}[x]$, 
		the ratio $(f(x)-p(x))/\log x$ diverges to positive or negative infinity as $x\to\infty$, 
		where the sign of infinity depends on $p$.
	\end{itemize}
\end{proposition}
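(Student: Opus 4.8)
The plan is to prove the two implications separately, with the bulk of the work in ``ratio condition $\Rightarrow$ uniform distribution''; I begin with the converse. Suppose $(f(x)-p(x))/\log x$ fails to diverge to $\pm\infty$ for some $p\in\mathbb{Q}[x]$. Since $f-p\in\cH$, this ratio is itself a Hardy-field germ and hence tends to a limit in $[-\infty,+\infty]$; the failure forces that limit to be a finite real $L$, so $g\coloneqq f-p=O(\log x)$, and because $g\in\cH$ we also get $xg'(x)\to L$, in particular $g'(x)=O(1/x)$. Let $q\in\mathbb{N}$ clear the denominators of $p$, so that $qp(n)\in\mathbb{Z}$ for every integer $n$ and $e(qf(n))=e(qg(n))$. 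Using $g'(x)=O(1/x)$, a comparison of $\sum_{n\le N}e(qg(n))$ with $\int_1^N e(qg(x))\,dx$ together with a dyadic decomposition $[N2^{-j-1},N2^{-j}]$ shows that the phase of $e(qg(\cdot))$ changes by $o(1)$ over each block, so no cancellation occurs and $\bigl|\sum_{n\le N}e(qg(n))\bigr|\gg N$; hence the Weyl sum in \eqref{Weyl} with $\mathbf{h}=q\neq0$ does not tend to $0$, and $(f(n))_{n}$ is not uniformly distributed modulo $1$. (If $g$ is bounded this is even simpler: $g$ tends to a constant $c$ and the average tends to $e(qc)$, of modulus $1$.)

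For the main implication, assume $(f-p)/\log x\to\pm\infty$ for every $p\in\mathbb{Q}[x]$. By Weyl's criterion it suffices to prove $\frac1N\sum_{n\le N}e(hf(n))\to0$ for every integer $h\ge1$, and since $hf-p=h(f-p/h)$ with $p/h\in\mathbb{Q}[x]$, the germ $hf$ inherits the hypothesis, so we may take $h=1$. Because $f$ is subpolynomial and lies in a Hardy field, all of its derivatives are eventually monotone of controlled size; in particular $f\prec x^{m}$ forces $f^{(m)}(x)\to0$ (integrate the monotone germ $f^{(m)}$), so there is a least $k\ge1$ with $f^{(k)}(x)\to0$. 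Now split according to the size of $f^{(k)}$ on a dyadic block $[M,2M]$. If $f^{(k)}(x)\succ x^{-k}$, then on $[M,2M]$ one has $M^{k}|f^{(k)}(M)|\to\infty$ while $f^{(k)}(M)\to0$, and van der Corput's method (the $k$-th derivative test for $k\ge2$, Kusmin--Landau for $k=1$) gives $\sum_{M\le n\le2M}e(f(n))=o(M)$; summing over $M=N,N/2,N/4,\dots$ yields $\frac1N\sum_{n\le N}e(f(n))\to0$. If instead $f^{(k)}(x)=O(x^{-k})$, integrating $k$ times produces real constants and shows $f(x)=\tilde p(x)+O(\log x)$ for some $\tilde p\in\mathbb{R}[x]$ of degree $<k$; applying the hypothesis with $p$ the polynomial obtained from $\tilde p$ by dropping its irrational coefficients forces $\tilde p$ to carry an irrational coefficient in some degree $\ge1$, whence $(\tilde p(n))_{n}$ is uniformly distributed modulo $1$ by Weyl's theorem on polynomials. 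One then upgrades this to $\frac1N\sum_{n\le N}e(f(n))\to0$ by combining a quantitative Weyl bound for $\sum e(\tilde p(n))$ with the slow variation of the $O(\log x)$ remainder.

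The delicate point, and the one I expect to be the main obstacle, is exactly this last upgrade: passing from equidistribution of the polynomial part $\tilde p(n)$ to equidistribution of $f(n)=\tilde p(n)+g(n)$ when $g$ is only $O(\log x)$. A crude partial summation against $e(g(n))$ costs a factor $\log N$, which is fatal, so one needs either a genuine power saving $\sum_{n\le N}e(\tilde p(n))\ll N^{1-\delta}$ --- available from Weyl's inequality when the irrational coefficient occupies a ``relevant'' position, but requiring extra case analysis otherwise --- or a direct van der Corput estimate applied to $f$ itself on a well-chosen scale, exploiting the interaction of the polynomial oscillation with the monotone logarithmic drift. The Diophantine borderline, where $\tilde p$ has Liouville-type coefficients and only $\sum_{n\le N}e(\tilde p(n))=o(N)$ holds with no power saving, is where the Hardy-field hypothesis on $f$ must be reused to pin down $g$ finely enough, and making all of these sub-cases close uniformly is the technical heart of the proof.
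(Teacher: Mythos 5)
The first thing to say is that the paper does not prove Proposition~\ref{ud-Hardy} at all: it is quoted from Boshernitzan \cite{Bo1} and used as a black box, so the comparison here is with the literature rather than with an internal argument. Your outline does follow the correct skeleton of that proof (Weyl's criterion, reduction to $h=1$, a dichotomy according to whether the first derivative $f^{(k)}$ tending to $0$ satisfies $f^{(k)}(x)\succ x^{-k}$ or $f^{(k)}(x)=O(x^{-k})$, van der Corput/Kusmin--Landau on dyadic blocks in the first case, and $f=\tilde p+E$ with $E=O(\log x)$ in the second). But it is an outline, not a proof: you explicitly leave open the step you yourself identify as the heart of the matter, namely deducing $\frac1N\sum_{n\le N}e(\tilde p(n)+E(n))\to0$ from equidistribution of $(\tilde p(n))_n$, and the routes you propose (a power saving from Weyl's inequality, a separate treatment of Liouville coefficients) are the wrong tools and will not close uniformly. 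The missing idea is \emph{well-distribution}: a real polynomial with an irrational coefficient in some degree $\ge1$ satisfies $\frac1L\sum_{M<n\le M+L}e(\tilde p(n))\to0$ as $L\to\infty$ uniformly in $M$ (classical, via Weyl differencing). Since $E=f-\tilde p$ lies in a Hardy field and is $O(\log x)$, one gets $E'(x)=O(1/x)$, so $E$ is constant up to $o(1)$ on each block of length $L=\lfloor\sqrt N\rfloor$ inside $[1,N]$; pulling $e(E(\cdot))$ out of each block sum and applying well-distribution finishes the case with no Diophantine analysis whatsoever.

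The converse implication also contains a false step. From $g=f-p$ with $xg'(x)\to L$ finite you assert that the phase of $e(qg(\cdot))$ changes by $o(1)$ over each dyadic block, hence ``no cancellation'' and $\bigl|\sum_{n\le N}e(qg(n))\bigr|\gg N$. If $L\ne0$ the phase changes by $qL\log2+o(1)$ per dyadic block, which is not $o(1)$; and if $L=0$ the per-block change is $o(1)$ but there are $\asymp\log N$ blocks, so the total variation of the phase on $[1,N]$ can still be unbounded (take $g(x)=\sqrt{\log x}$), and smallness per block does not yield a lower bound for the full sum. What you actually need, and what is true, is only $\limsup_{N\to\infty}\frac1N\bigl|\sum_{n\le N}e(qg(n))\bigr|>0$: fix $c\in(0,1)$ with $q(|L|+1)\log(1/c)<1/10$, note that the phase varies by less than $1/10$ on $[cN,N]$ for large $N$, so $\bigl|\sum_{cN<n\le N}e(qg(n))\bigr|\ge\frac12(1-c)N$, and then compare the sums over $[1,N]$ and $[1,cN]$ to contradict \eqref{Weyl}. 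So both halves of your argument need repair, and the forward half is missing its main step.
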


The next corollary is a simple application of Proposition~\ref{ud-Hardy}.

\begin{corollary}
	Let $n_0=\lceil{e^e}\rceil=16$, 
	and let $f$ be a function in \eqref{equation=examples}.
	Then the sequence $\bigl( (f(n),f'(n)) \bigr)_{n=n_0}^\infty$ is uniformly distributed modulo $1$.
\end{corollary}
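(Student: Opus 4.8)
The plan is to invoke the Weyl criterion in the equivalent form recalled right after \eqref{Weyl}: the sequence $\bigl((f(n),f'(n))\bigr)_{n\ge n_0}$ is uniformly distributed modulo $1$ if and only if, for every non-zero $\mathbf h=(h_1,h_2)\in\mathbb{Z}^2$, the scalar sequence $\bigl(h_1f(n)+h_2f'(n)\bigr)_{n\ge n_0}$ is uniformly distributed modulo $1$. So I fix such an $\mathbf h$ and set $g\coloneqq h_1f+h_2f'$. Since $f$ lies in some Hardy field $\mathcal{F}$, closure under differentiation gives $f'\in\mathcal{F}$, and as every Hardy field contains $\mathbb{Q}$ we get $g\in\mathcal{F}\subset\cH$. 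Each $f$ in \eqref{equation=examples} satisfies $f(x)\ll x^2$ and, by a direct computation, $f'(x)\ll x$, so $g$ is subpolynomial. Hence Proposition~\ref{ud-Hardy} applies to $g$, and the problem reduces to showing that for every $p(x)\in\mathbb{Q}[x]$ the ratio $\bigl(g(x)-p(x)\bigr)/\log x$ tends to $+\infty$ or to $-\infty$. (Uniform distribution modulo $1$ is a tail property, so the specific value $n_0=16$ plays no role; it is there only to make $\log\log x$ defined and positive and to secure (a2) on $[n_0,\infty)$.)

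I would then run a short growth-order analysis, splitting according to whether $h_1=0$. If $h_1\ne0$, then $g(x)\sim h_1f(x)$, and condition (a1) --- which each of the four functions satisfies with $d=1$ --- yields $\log x\prec x\prec f(x)\prec x^2$. Consequently $g(x)-p(x)$ is eventually monotone of a fixed sign with $|g(x)-p(x)|\succ\log x$: it is $\asymp|h_1|f(x)$ when $\deg p\le1$, and $\asymp$ the leading term of $p$ when $\deg p\ge2$ (this last case uses $f(x)\prec x^2$ to see that a genuine quadratic term of $p$ wins). If $h_1=0$, then $g=h_2f'$, and (a1) forces $f'(x)\succ\log x$ (see Section~\ref{uni-dist}), while a direct computation gives $f'(x)\prec x$; hence again $g(x)-p(x)$ is eventually of one sign with $|g(x)-p(x)|\succ\log x$, being $\asymp|h_2|f'(x)$ when $p$ is constant and $\asymp$ the leading term of $p$ otherwise. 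In every case Proposition~\ref{ud-Hardy} gives that $\bigl(g(n)\bigr)_{n\ge n_0}$ is uniformly distributed modulo $1$, and the Weyl criterion then produces the corollary.

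The one genuinely delicate point --- the one to be careful about --- is the sub-case $h_1=0$, where the statement really does use the standing assumptions $\alpha>1$, $\beta>1$, $\gamma>0$: the relation $f'(x)\succ\log x$ fails exactly at the boundary $\beta=1$, since there $f'(x)=\log x+1$ and $\bigl(f'(n)\bigr)_{n\ge n_0}$, hence the joint sequence, is not uniformly distributed modulo $1$; the hypothesis $\beta>1$ is precisely what rescues $f'$. Beyond this, everything is routine bookkeeping resting on the immediate comparisons that $x^{\alpha-1}$, $(\log x)^\beta$, $x(\log x)^{-\gamma}$ and $x\bigl((\log\log x)^\gamma\log x\bigr)^{-1}$ all lie strictly between $\log x$ and $x$, while $x^\alpha$, $x(\log x)^\beta$, $x^2(\log x)^{-\gamma}$ and $x^2(\log\log x)^{-\gamma}$ all lie strictly between $x$ and $x^2$.
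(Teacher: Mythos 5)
Your argument is correct and follows the paper's own proof exactly: reduce via the Weyl criterion to the scalar sequences $h_0f+h_1f'$, observe that these are subpolynomial elements of $\cH$, and verify the polynomial-divergence condition of Proposition~\ref{ud-Hardy} --- the paper simply leaves your growth-order case analysis as ``easily checked.'' (One harmless slip: for $f(x)=x^2(\log\log x)^{-\gamma}$ the dominant term of $f'$ is $2x(\log\log x)^{-\gamma}$ rather than $x\bigl((\log\log x)^\gamma\log x\bigr)^{-1}$, but both expressions lie strictly between $\log x$ and $x$, so your conclusion is unaffected.)
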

\begin{proof}
	Take a non-zero $(h_0,h_1)\in\mathbb{Z}^2$ arbitrarily.
	All we need is to show that 
	the sequence $(h_0f(n)+h_1f'(n))_{n=n_0}^\infty$ is uniformly distributed modulo $1$.
	It can be easily checked that for every $p(x)\in\mathbb{Q}[x]$ 
	the ratio $(h_0f(x)+h_1f'(x)-p(x))/\log x$ diverges to positive or negative infinity as $x\to\infty$.
	Since the function $h_0f+h_1f'$ belongs to $\cH$ and is subpolynomial, 
	Proposition~\ref{ud-Hardy} implies that the sequence $(h_0f(n)+h_1f'(n))_{n=n_0}^\infty$ is uniformly distributed modulo $1$.
	Therefore, we conclude this corollary.
\end{proof}

For the function $f(x)=x^\alpha$ with $\alpha\in(d,d+1)$ and $d\in\mathbb{N}$, 
it can be proved that 
the sequence $\bigl( (f(n), f'(n), f''(n)/2!, \ldots, f^{(d)}(n)/d!) \bigr)_{n=1}^\infty$ is uniformly distributed modulo $1$ 
in the same way as the above corollary.

Next, using uniform distribution modulo $1$, 
we state two propositions that imply Theorems~\ref{main0} and \ref{main1}.

\begin{proposition}\label{main0'}
	Let $n_0,d\in\mathbb{N}$, 
	and let $f\colon [n_0,\infty)\to\mathbb{R}$ be a $(d+1)$-times differentiable function 
	satisfying that 
	\begin{itemize}
		\item[(A1)]
		The $(d+1)$-st derivative $f^{(d+1)}(x)$ vanishes as $x\to\infty$;
		\item[(A2)]
		$\bigl( (f(n), f'(n), f''(n)/2!, \ldots, f^{(d)}(n)/d!) \bigr)_{n=1}^\infty$ is uniformly distributed modulo $1$;
		\item[(A3)]
		$\inf_{x\ge n_0} f'(x)\ge1$.
	\end{itemize}
	Then, for all integers $k\ge d+2$ and $r\ge1$, 
	the equality \eqref{eq:main0} holds.
	Also, $\mu(\cC_{k,d+1})$ is bounded below by $1/\prod_{i=1}^d \binom{k-1}{i}$.
\end{proposition}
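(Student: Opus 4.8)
The plan is to recast the condition $(\lfloor f(n+rj)\rfloor)_{j=0}^{k-1}\in\cP_{k,d}$ as the membership of a $(d+1)$-tuple of fractional parts attached to $f$ in a fixed piecewise-polyhedral subset of $[0,1)^{d+1}$, apply uniform distribution modulo $1$ to extract the density, and then identify that density with $\mu(\cC_{k,d+1})$ by an exact change of variables; the final sentence is a separate, purely geometric estimate on the polytope.

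Write $u_i(n):=\Delta_r^i f(n)$. Two preliminaries. First, the Lagrange remainder for Newton's forward-difference interpolation on the nodes $n,n+r,\dots,n+rd$ together with (A1) gives
\[
f(n+rj)=\sum_{i=0}^d\binom{j}{i}u_i(n)+E_{n,j},\qquad E_{n,j}=r^{d+1}\binom{j}{d+1}f^{(d+1)}(\xi_{n,j})\to0
\]
uniformly for $j\in[0,k)_{\mathbb Z}$ (and $E_{n,j}=0$ for $j\le d$). Second, expanding each $u_i(n)$ by Taylor against $f(n),f'(n),\dots,f^{(d)}(n)/d!$ yields $u_i(n)=\sum_{m=i}^d i!\,S(m,i)\,r^m\cdot\tfrac{f^{(m)}(n)}{m!}+o(1)$ with $S$ the Stirling numbers of the second kind; that is, modulo $1$ and up to an $o(1)$ error, $(u_0(n),\dots,u_d(n))$ is the image of $(f(n),f'(n),\dots,f^{(d)}(n)/d!)$ under a fixed upper-triangular integer matrix with nonzero diagonal $i!\,r^i$, hence of rank $d+1$. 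By (A2), the invariance of uniform distribution under full-rank integer matrices, and the fact that an $o(1)$ perturbation does not affect uniform distribution, the sequence $\bigl(\mathbf y(n)\bigr)_n$ with $\mathbf y(n):=(\{u_0(n)\},\dots,\{u_d(n)\})$ is uniformly distributed in $[0,1)^{d+1}$.

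Peeling off the integer parts, and using that $f'\ge1$ makes the entries $\lfloor f(n+rj)\rfloor$ strictly increasing and positive once $n$ is large, one gets: for all large $n$, $(\lfloor f(n+rj)\rfloor)_{j=0}^{k-1}\in\cP_{k,d}$ if and only if $\bigl(\lfloor Q(j;\mathbf y(n))+E_{n,j}\rfloor\bigr)_{j=0}^{k-1}$ is the restriction of a polynomial of degree $\le d$, where $Q(x;\mathbf y):=\sum_{i=0}^d\binom{x}{i}y_i$; indeed $\lfloor f(n+rj)\rfloor=\sum_i\binom{j}{i}\lfloor u_i(n)\rfloor+\lfloor Q(j;\mathbf y(n))+E_{n,j}\rfloor$, and the first summand is already the restriction of a degree-$\le d$ polynomial with integer values. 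Put $\cD:=\{\mathbf y\in[0,1)^{d+1}:(\lfloor Q(j;\mathbf y)\rfloor)_{j=0}^{k-1}\text{ restricts a degree-}{\le}d\text{ polynomial}\}$. This is a finite disjoint union of convex polytopes — one for each admissible integer vector of ``floor values'' — with null boundary (contained in the finitely many hyperplanes $Q(j;\mathbf y)\in\mathbb Z$). Since $E_{n,j}\to0$, a routine sandwich between the $\delta$-interior and the $\delta$-neighbourhood of $\cD$, combined with the uniform distribution of $(\mathbf y(n))_n$, gives $\frac1N\#\{n\in[n_0,N]_{\mathbb Z}:(\lfloor f(n+rj)\rfloor)_{j=0}^{k-1}\in\cP_{k,d}\}\to\mu(\cD)$. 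To identify $\mu(\cD)$ with $\mu(\cC_{k,d+1})$ I would use $\pi:\cC_{k,d+1}\to[0,1)^{d+1}$, $\pi(\mathbf z)=(\{z_0\},\dots,\{z_d\})$. On each slab $\cC_{k,d+1}\cap(\boldsymbol\mu+[0,1)^{d+1})$ with $\boldsymbol\mu\in\mathbb Z^{d+1}$ this is translation by $-\boldsymbol\mu$, hence measure-preserving, and $Q(j;\pi(\mathbf z))=Q(j;\mathbf z)-Q(j;\boldsymbol\mu)$ is a number in $[0,1)$ minus the integer $Q(j;\boldsymbol\mu)$, so $(\lfloor Q(j;\pi(\mathbf z))\rfloor)_j$ restricts the polynomial $-\sum_i\binom{x}{i}\mu_i$, i.e. $\pi(\mathbf z)\in\cD$. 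It is injective because a nonzero $\mathbb Z^{d+1}$-translate of $\cC_{k,d+1}$ is disjoint from it (if $\mathbf z,\mathbf z+\boldsymbol\nu\in\cC_{k,d+1}$ then $Q(j;\boldsymbol\nu)\in(-1,1)\cap\mathbb Z=\{0\}$ at $j=0,\dots,k-1$, so, as $k\ge d+2$, $\boldsymbol\nu=\mathbf0$), and surjective onto $\cD$ because if $(\lfloor Q(j;\mathbf w)\rfloor)_j$ restricts a $\mathbb Z$-valued degree-$\le d$ polynomial $\tilde q$, then $\tilde q(x)=\sum_i\binom{x}{i}\mu_i$ with $\mu_i=\Delta_1^i\tilde q(0)\in\mathbb Z$ — the integrality of the binomial coefficients of an integer-valued polynomial being exactly why $\cC_{k,d+1}$ is phrased via $\binom{j}{i}$ — and then $\mathbf w-\boldsymbol\mu\in\cC_{k,d+1}$ with $\pi(\mathbf w-\boldsymbol\mu)=\mathbf w$. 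Hence $\mu(\cD)=\mu(\cC_{k,d+1})$, which proves the density formula.

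For the lower bound $\mu(\cC_{k,d+1})\ge 1/\prod_{i=1}^d\binom{k-1}{i}$ I would argue geometrically: integrating out $y_0$ gives
\[
\mu(\cC_{k,d+1})=\int_{\mathbb R^d}\max\!\Bigl(0,\ 1-\operatorname{osc}_{0\le j\le k-1}\textstyle\sum_{i=1}^d\binom{j}{i}y_i\Bigr)\,dy,
\]
and since the oscillation is the gauge of the centrally symmetric convex body $K:=\{y\in\mathbb R^d:\operatorname{osc}_{0\le j\le k-1}\sum_{i=1}^d\binom{j}{i}y_i\le1\}$, this equals $\mu(K)/(d+1)$; one then bounds $\mu(K)$ below by inscribing a rescaled affine copy of the analogous region in one fewer variable (essentially a copy of $\cC_{k-1,d}$), obtaining a recursion in $d$ with base case $\cC_{k,1}=[0,1)$. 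I expect this step to be the main obstacle: the crude inclusions — bounding every value $\sum_i\binom{j}{i}y_i$ by a constant, or every consecutive difference by $1/(k-1)$ — lose extra factorial or power-of-$(k-1)$ factors, so one must inscribe a cleverer region inside $K$ (for instance exploiting that in the extremal regime the interpolating polynomial $\sum_i\binom{x}{i}y_i$ may be taken monotone on an initial segment of $[0,k-1]$) in order to land exactly on $1/\prod_{i=1}^d\binom{k-1}{i}$. Everything else above — the uniform-distribution reduction and the measure-preserving bijection — is essentially bookkeeping once the integer matrix in the second preliminary is identified.
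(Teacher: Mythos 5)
Your treatment of the limit \eqref{eq:main0} is essentially the paper's own argument in different packaging: where you use the Newton forward-difference expansion $f(n+rj)=\sum_{i}\binom{j}{i}\Delta_r^if(n)+E_{n,j}$ and then observe that $(\Delta_r^if(n))_i$ is an $o(1)$ perturbation of a fixed full-rank upper-triangular integer image of $(f(n),\dots,f^{(d)}(n)/d!)$, the paper works directly with that exact image $a_i(n)=\sum_{l\ge i}\frac{r^l}{l!}f^{(l)}(n)S(l,i)i!$ obtained from Taylor's theorem plus the Stirling identity, and the Lagrange remainder is the paper's $\frac{(rj)^{d+1}}{(d+1)!}f^{(d+1)}(n+\theta)$. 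Your sandwich between the $\delta$-interior and $\delta$-neighbourhood of $\cD$ is the paper's pair $\cC_{k,d+1}^{\mp}(\epsilon)$, and your unfolding bijection $\pi$ is the paper's disjoint decomposition over integer translates $(s_i)$; these are correct and amount to the same computation.

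The genuine gap is the final claim $\mu(\cC_{k,d+1})\ge 1/\prod_{i=1}^d\binom{k-1}{i}$, which is part of the proposition and which you explicitly leave unfinished: reducing to $\mu(K)/(d+1)$ for the oscillation body $K$ is fine as far as it goes, but the decisive step --- producing an inscribed region of the right volume --- is only conjectured (``I expect this step to be the main obstacle''), and it is not clear your proposed recursion on $d$ lands on the exact constant. The paper's Lemma~\ref{conv-set-lb} does this in one stroke, with no recursion and no gauge-function computation: define
$\cC'_{k,d+1}=\{(y_i)_{i=0}^d: 0\le y_0<1,\ 0\le\sum_{i=0}^{j}\binom{k-1}{i}y_i<1\ (\forall j\in[1,d]_{\mathbb{Z}})\}$,
whose volume is exactly $1/\prod_{i=1}^d\binom{k-1}{i}$ because it is the preimage of the unit cube under a triangular map with determinant $\prod_{i=1}^d\binom{k-1}{i}$, and then check $\cC'_{k,d+1}\subset\cC_{k,d+1}$ by writing, for each $j\in[1,k)_{\mathbb{Z}}$, the functional $\sum_{i=0}^d\binom{j}{i}y_i$ as the convex combination $\sum_{l=0}^d c_l\bigl(\sum_{i=0}^l\binom{k-1}{i}y_i\bigr)$ with the telescoping coefficients $c_l=\binom{j}{l}\binom{k-1}{l}^{-1}-\binom{j}{l+1}\binom{k-1}{l+1}^{-1}\ge0$ summing to $1$. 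Without this (or some completed substitute), your proof establishes the limit formula but not the stated positivity/lower bound.
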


\begin{proposition}\label{main1'}
	Let $n_0,d\in\mathbb{N}$, 
	and let $f\colon [n_0,\infty)\to\mathbb{R}$ be a $(d+1)$-times differentiable function 
	satisfying that 
	\begin{itemize}
		\item[(B1)]
		The $(d+1)$-st derivative $f^{(d+1)}(x)$ eventually decreases, 
		and vanishes as $x\to\infty$;
		\item[(B2)]
		$\lim_{x\to\infty} x^{d+1}f^{(d+1)}(x)=\infty$;
		\item[(B3)]
		For every $\delta\in(0,1)$, there exist $c(\delta)\ge1$ and $x_0(\delta)\ge n_0/\delta$ such that 
		every $x\ge x_0(\delta)$ satisfies $f^{(d+1)}(\delta x)\le c(\delta)f^{(d+1)}(x)$;
		\item[(B4)]
		$(f(n))_{n=n_0}^\infty$, $(f'(n))_{n=n_0}^\infty$, $(f''(n)/2!)_{n=n_0}^\infty$, ..., 
		$(f^{(d)}(n)/d!)_{n=n_0}^\infty$ are uniformly distributed modulo $1$;
		\item[(B5)]
		$\inf_{x\ge n_0} f'(x)\ge1$.
	\end{itemize}
	Then, for every integer $k\ge d+2$, the equality \eqref{eq:main1} holds.
\end{proposition}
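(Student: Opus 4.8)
The plan is to count the underlying pairs $(n,r)$ directly. Identifying a $k$-term arithmetic progression $P\subset[n_0,N]_{\mathbb Z}$ with the unique pair $(n,r)$ for which $n\ge n_0$, $r\ge1$, $n+(k-1)r\le N$ and $P=\{n,n+r,\dots,n+(k-1)r\}$, the left side of \eqref{eq:main1} equals
\[
\mathcal{N}(N):=\#\bigl\{(n,r):n\ge n_0,\ r\ge1,\ n+(k-1)r\le N,\ (\lfloor f(n+rj)\rfloor)_{j=0}^{k-1}\in\cP_{k,d}\bigr\}.
\]
By (B1)–(B2) we may, after harmlessly enlarging $n_0$, assume $f^{(d+1)}>0$ and decreasing on $[n_0,\infty)$; the $O(N)$ pairs thereby discarded are negligible, since $f^{(d+1)}(N)^{-1/(d+1)}\to\infty$. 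I would then record two identities with remainder, valid for integers $0\le j\le k-1$: Taylor's formula $f(n+rj)=M_{n,r}(j)+\rho_{n,r}(j)$ with $M_{n,r}(j):=\sum_{a=0}^{d}\tfrac{f^{(a)}(n)}{a!}(rj)^a$ and $0\le\rho_{n,r}(j)=\tfrac{f^{(d+1)}(\xi)}{(d+1)!}(rj)^{d+1}\le\tfrac{(k-1)^{d+1}}{(d+1)!}r^{d+1}f^{(d+1)}(n)$ for some $\xi\in[n,N]$; and, from $\Delta_r^{d+1}f(x)=r^{d+1}\int_{[0,1]^{d+1}}f^{(d+1)}\bigl(x+r(t_1+\dots+t_{d+1})\bigr)\,dt$ and monotonicity of $f^{(d+1)}$, the bounds $r^{d+1}f^{(d+1)}(x+(d+1)r)\le\Delta_r^{d+1}f(x)\le r^{d+1}f^{(d+1)}(x)$, so that $\Delta_r^{d+1}f(x)\asymp_{c(\cdot),k,d}r^{d+1}f^{(d+1)}(x)$ once $(d+1)r\le x$ by (B3).

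For the upper bound, suppose $r>2f^{(d+1)}(N)^{-1/(d+1)}$ and $n_0\le n$, $n+(k-1)r\le N$. If $(\lfloor f(n+rj)\rfloor)_{j=0}^{k-1}\in\cP_{k,d}$ then its $(d+1)$-st forward difference at $0$ vanishes, whence $\Delta_r^{d+1}f(n)=\sum_{i=0}^{d+1}(-1)^{d+1-i}\binom{d+1}{i}\{f(n+ri)\}$, a quantity of modulus at most $2^{d}$; but $\Delta_r^{d+1}f(n)\ge r^{d+1}f^{(d+1)}(n+(d+1)r)\ge r^{d+1}f^{(d+1)}(N)>2^{d+1}$ (using $n+(d+1)r\le n+(k-1)r\le N$ since $k\ge d+2$), a contradiction. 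Hence no $r>2f^{(d+1)}(N)^{-1/(d+1)}$ contributes, while for each of the $\le 2f^{(d+1)}(N)^{-1/(d+1)}$ remaining values of $r$ there are trivially $\le N$ admissible $n$; thus $\mathcal{N}(N)\ll_{c(\cdot),k,d}Nf^{(d+1)}(N)^{-1/(d+1)}$.

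For the lower bound I would first isolate a sufficient condition. Write $M_{n,r}(j)=\sum_{i=0}^{d}W_i\binom{j}{i}$; the change between monomial and binomial bases gives $(W_0,\dots,W_d)^{\top}=\mathbf{A}\bigl(\tfrac{f^{(0)}(n)}{0!}r^0,\dots,\tfrac{f^{(d)}(n)}{d!}r^d\bigr)^{\top}$ for the fixed upper-triangular integer matrix $\mathbf{A}=(i!\,S(a,i))_{0\le i,a\le d}$, which has determinant $\prod_{i=0}^{d}i!\ne0$. If $\{W_i\}<2^{-k}$ for every $i\in[0,d]_{\mathbb Z}$ and $\tfrac{(k-1)^{d+1}}{(d+1)!}r^{d+1}f^{(d+1)}(n)\le\tfrac12$, then for each $j\in[0,k)_{\mathbb Z}$
\[
0\ \le\ f(n+rj)-\sum_{i=0}^{d}\binom{j}{i}\lfloor W_i\rfloor\ =\ \sum_{i=0}^{d}\binom{j}{i}\{W_i\}+\rho_{n,r}(j)\ <\ 2^{k-1}\!\cdot 2^{-k}+\tfrac12\ =\ 1,
\]
using $\rho_{n,r}(j)\ge0$; so $\lfloor f(n+rj)\rfloor=\sum_{i=0}^{d}\binom{j}{i}\lfloor W_i\rfloor$ is a polynomial of degree $\le d$ in $j$, and $(n,r)$ is admissible. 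Putting $c_1:=\bigl(\tfrac{(d+1)!}{2(k-1)^{d+1}}\bigr)^{1/(d+1)}$ and $R_N:=\lfloor c_1\,c(\tfrac12)^{-1/(d+1)}f^{(d+1)}(N)^{-1/(d+1)}\rfloor$ (so that, by (B3), the last inequality holds for all $n\in[\tfrac N2,N]$ and $r\le R_N$), and noting $R_N\to\infty$ by (B2) while $R_N=o(N)$, it suffices to prove
\[
\#\bigl\{(n,r)\in\bigl[\tfrac N2,\tfrac{3N}{4}\bigr]_{\mathbb Z}\times[1,R_N]_{\mathbb Z}:\ \{W_i(n,r)\}<2^{-k}\ \ (0\le i\le d)\bigr\}\ \gg_{c(\cdot),k,d}\ Nf^{(d+1)}(N)^{-1/(d+1)}.
\]
Since $\mathbf{A}$ is an integer matrix of nonzero determinant, this reduces to showing that the two-parameter sequence $\bigl(\tfrac{f^{(0)}(n)}{0!}r^0,\dots,\tfrac{f^{(d)}(n)}{d!}r^d\bigr)$, indexed by $(n,r)$ over these boxes, is uniformly distributed modulo $1$ as $N\to\infty$: the displayed count is then $\bigl(2^{-k(d+1)}+o(1)\bigr)$ times the box-size $\asymp Nf^{(d+1)}(N)^{-1/(d+1)}$. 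To obtain this, one bounds for each non-zero $\mathbf{h}=(h_a)\in\mathbb Z^{d+1}$ the double sum $\sum_{n}\sum_{r\le R_N}e\bigl(\sum_{a=0}^{d}h_a\tfrac{f^{(a)}(n)}{a!}r^a\bigr)$: the inner sum over $r$ is a polynomial exponential sum of degree $i^\ast:=\max\{a:h_a\ne0\}$ with leading coefficient $h_{i^\ast}f^{(i^\ast)}(n)/i^\ast!$, so Weyl's inequality (preceded, if $i^\ast\ge2$, by Weyl differencing down to a linear sum in $r$) reduces matters to averaging a quantity of the form $\min\bigl(R_N^{O(1)},\|m\,f^{(i^\ast)}(n)/i^\ast!\|^{-1}\bigr)$ over $n\in[\tfrac N2,\tfrac{3N}{4}]$, which is controlled by the uniform distribution modulo $1$ of $(f^{(i^\ast)}(n)/i^\ast!)_n$ — hence of each of its non-zero integer multiples — supplied by (B4), the case $i^\ast=0$ being immediate from the uniform distribution of $(f(n))_n$. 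Combined with the upper bound this gives \eqref{eq:main1}.

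The hard part is the lower bound, and within it this last equidistribution step: (B4) supplies only \emph{individual}, and only qualitative (no discrepancy rate), uniform distribution of the $(f^{(i)}(n)/i!)_n$, while the $r$-range $R_N$ grows with $N$, so the error terms produced by Weyl differencing — which one wants to average against the $f^{(i^\ast)}$'s — must be controlled \emph{uniformly} in $N$. Arranging this (for instance by summing the Weyl bound over the difference parameters before averaging, and invoking uniform distribution only on the long interval $[\tfrac N2,\tfrac{3N}{4}]$, or by separating off the sparse set of $n$ for which $\sum_a h_a\tfrac{f^{(a)}(n)}{a!}r^a$ has a badly approximable leading coefficient) is the technical core of the argument; every other step, in particular the entire upper bound, is soft and uses only (B1)–(B3).
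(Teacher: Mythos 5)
Your upper bound is complete and correct, and in fact cleaner than the paper's: you pass directly to the $(d+1)$-st difference $\Delta_r^{d+1}f(n)=r^{d+1}\int_{[0,1]^{d+1}}f^{(d+1)}(n+r(t_1+\cdots+t_{d+1}))\,dt$ and compare it with the bounded quantity $\sum_{i=0}^{d+1}(-1)^{d+1-i}\binom{d+1}{i}\{f(n+ri)\}$, whereas the paper reaches the same threshold $r\ll f^{(d+1)}(n)^{-1/(d+1)}$ through a longer monotonicity computation with $g(x)=\Delta_x^df(m_0+(k-d-1)x)-\Delta_x^df(m_0)$. Your reduction of the lower bound is also sound: the sufficient condition $\{W_i\}<2^{-k}$, the choice of $R_N$ via (B3), and the reduction through the unimodular change of basis $\mathbf{A}$ all match the paper's scheme.

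The genuine gap is the one you flag yourself: the two-parameter equidistribution of $\bigl(f(n),rf'(n),\ldots,r^df^{(d)}(n)/d!\bigr)$ over boxes $[\,\cdot\,]\times[1,R_N]$ is asserted, not proved, and the route you sketch would not close it as stated. Weyl differencing the inner sum over $r\le R_N$ down to a linear sum produces shift parameters $h_1,\ldots,h_{i^\ast-1}$ ranging up to $R_N$, so the quantity to be averaged over $n$ is $\min\bigl(R_N,\|m\,f^{(i^\ast)}(n)/i^\ast!\|^{-1}\bigr)$ with $m=i^\ast!\,h_1\cdots h_{i^\ast-1}$ running over $\asymp R_N^{i^\ast-1}$ integers that grow with $N$. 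Hypothesis (B4) gives, for each \emph{fixed} nonzero $m$, only the qualitative statement $\frac1N\#\{n:\|mc_n\|\le 1/T\}\to 2/T$, with no uniformity in $m$; since the relevant $m$ change with $N$, you cannot interchange the limits, and the unboundedness of $\|\cdot\|^{-1}$ prevents you from invoking equidistribution directly. This is exactly the obstruction the paper's Lemmas~\ref{diff-ineq}--\ref{lem:Weyl} are designed to circumvent: van der Corput's inequality is applied to the $r$-sum with a shift range $H$ that is \emph{fixed} (sent to infinity only after $N\to\infty$), and the degree is lowered by induction so that the average over $n$ is ultimately taken only at the base case $\frac1{N}\sum_n e(c_n)$, for finitely many fixed nonzero integer multiples of the sequences in (B4). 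To complete your argument you would need to either adopt this fixed-$H$ differencing scheme or strengthen (B4) to a quantitative discrepancy bound; as written, the "technical core" you acknowledge is precisely the missing proof.
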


The above propositions do not use the notion of Hardy fields, 
but uniform distribution modulo $1$ is used instead.
In general, it is not so easy to investigate uniform distribution modulo $1$, 
but it is easy for $f\in\cH$ as stated in Proposition~\ref{ud-Hardy}.
This is why we have used the notion of Hardy fields in Theorems~\ref{main0} and \ref{main1}.
Propositions~\ref{main0'} and \ref{main1'} are proved in Section~\ref{proofs}.

Before proving Theorems~\ref{main0} and \ref{main1} while assuming Propositions~\ref{main0'} and \ref{main1'}, 
we remark some properties of functions in $\cH$ \cite{FW}: 
\begin{itemize}
	\item[(H1)]
	Every $f\in\cH$ has eventually constant sign;
	\item[(H2)]
	Every $f\in\cH$ is eventually monotone;
	\item[(H3)]
	For every $f\in\cH$, 
	the limit $\displaystyle\lim_{x\to\infty} f(x)$ exists as an element of $\mathbb{R}\cup\{\pm\infty\}$;
	\item[(H4)]
	If $f\in\cH$ and if $g\in\cLE$ is eventually non-zero, then $f/g\in\cH$;
	\item[(H5)]
	For every $f\in\cH$ and every $g\in\cLE$ that is eventually non-zero, 
	the limit $\displaystyle\lim_{x\to\infty} f(x)/g(x)$ exists as an element of $\mathbb{R}\cup\{\pm\infty\}$;
	\item[(H6)]
	If eventually positive $f\in\cH$ and $g\in\cLE$ satisfy $f(x)\succ g(x)$ (resp.\ $f(x)\prec g(x)$) 
	and if $\displaystyle\lim_{x\to\infty} f(x)=\lim_{x\to\infty} g(x)=\infty$, 
	then $f'(x)\succ g'(x)$ (resp.\ $f'(x)\prec g'(x)$).
	\item[(H7)]
	If $f\in\cH$ is eventually positive, then $\log f(\cdot)\in\cH$.
\end{itemize}
Property (H1) is derived from the fact that $f\in\cH$ is eventually zero or has a reciprocal.
Property (H2) follows from (H1) by considering the derivative $f'\in\cH$.
Property (H3) follows from (H2) and the monotone convergence theorem.
Property (H5) follows from (H3) and (H4).
Property (H6) follows from (H5) and L'Hospital's rule.
For (H7), see \cite[Theorem~5.3]{Bo2}.
The remaining (H4) is verified as follows.
The set $\cLE$ is a Hardy field by the equivalence relation in Section~\ref{results} \cite{Hardy1,Hardy2}, 
and is contained in every maximal Hardy field 
(a Hardy field $\cF$ is called \textit{maximal} if there are not any Hardy fields strictly containing $\cF$) \cite{Bo2,Bo3}.
Also, for every Hardy field $\cF$, there exists a maximal Hardy field containing $\cF$ (use Zorn's lemma).
Therefore, for $f\in\cH$ and $g\in\cLE$ in (H4), the ratio $f/g$ belongs to $\cH$.

\begin{proof}[Proof of Theorems~$\ref{main0}$ and $\ref{main1}$ assuming  Propositions~$\ref{main0'}$ and $\ref{main1'}$]
	Let $f\colon [n_0,\infty)\to\mathbb{R}$ be a differentiable function in $\cH$ 
	and satisfy (a1) and (a2).
	All we need is to show (B1)--(B4) and (A2).
	\par
	Proof of (B1) and (B2).
	The relation $x^{-1}\prec f^{(d+1)}(x)\prec 1$ follows from (a1) and (H6).
	Thus, $f^{(d+1)}(x)$ converges to $+0$ as $x\to\infty$.
	This and (H2) imply (B1).
	Also, since the relation $f^{(d+1)}(x)\succ x^{-1}$ yields that 
	$xf^{(d+1)}(x)$ diverges to positive infinity as $x\to\infty$, 
	so does $x^{d+1}f^{(d+1)}(x)$.
	\par
	Proof of (A2) and (B4).
	Properties (a1) and (H6) imply that 
	$x^{d-i}\log x\prec f^{(i)}(x)\prec x^{d+1-i}$ for all $i\in[0,d]_{\mathbb{Z}}$.
	This fact and Proposition~\ref{ud-Hardy} imply (A2).
	Finally, (B4) follows from (A2) immediately.
	\par
	Proof of (B3).
	All we need is to show that for every $\delta\in(0,1)$, 
	\begin{equation}
		\limsup_{x\to\infty} \frac{f^{(d+1)}(\delta x)}{f^{(d+1)}(x)} < \infty.
		\label{eq03}
	\end{equation}
	Let $g(x)=1/f^{(d+1)}(x)$.
	Instead of \eqref{eq03}, we show that for every $\beta>1$, 
	\begin{equation}
		\limsup_{x\to\infty} \frac{g(\beta x)}{g(x)} < \infty,
		\label{eq03'}
	\end{equation}
	which is equivalent to \eqref{eq03}.
	First, the relation $1\prec g(x)\prec x$ follows from $x^{-1}\prec f^{(d+1)}(x)\prec 1$, 
	and moreover the function $\log g(\cdot)$ belongs to $\cH$ due to (H7).
	These facts and (H5) imply that 
	the ratio $\log g(x)/\log x$ converges to some finite $\gamma\in[0,1]$ as $x\to\infty$.
	Since both $\log g(x)$ and $\log x$ diverge to positive infinity, 
	L'Hospital's rule and (H5) yield that 
	\[
	\lim_{x\to\infty} \frac{xg'(x)}{g(x)}
	= \lim_{x\to\infty} \frac{g'(x)/g(x)}{1/x}
	= \lim_{x\to\infty} \frac{\log g(x)}{\log x} = \gamma.
	\]
	Thus, there exists $x_0>0$ such that $xg'(x)\le(\gamma+1)g(x)$ for all $x\ge x_0$.
	Also, since the relation $g'(x)\prec 1$ holds due to (H6), 
	the derivative $g'$ is eventually decreasing due to (H2).
	\par
	Let $\beta>1$.
	The mean value theorem implies that 
	$g(\beta x) - g(x) = (\beta-1)xg'(\beta'x)$ for some $\beta'=\beta'(x)\in(1,\beta)$.
	Since $g'$ is eventually decreasing, every sufficiently large $x\ge x_0$ satisfies 
	\[
	g(\beta x) - g(x) = (\beta-1)xg'(\beta'x)
	\le (\beta-1)xg'(x)
	\le (\beta-1)(\gamma+1)g(x).
	\]
	Therefore, the left-hand side in \eqref{eq03'} is bounded above by $(\beta-1)(\gamma+1)+1$.
\end{proof}

\section{Proofs of Propositions~\ref{main0'} and \ref{main1'}}\label{proofs}

First, we begin with the proof of Proposition~\ref{main0'}, 
which is a basis of subsequent proofs.

\begin{proof}[Proof of Proposition~$\ref{main0'}$]
	Without loss of generality, 
	we may assume $n_0=1$.
	Fix integers $k\ge d+2$ and $r,d\ge1$.
	Taylor's theorem implies that 
	for every $n\in\mathbb{N}$ and $j\in[1,k)_{\mathbb{Z}}$ 
	there exists $\theta=\theta(n,j)\in(n,n+rj)$ such that 
	\begin{equation}
		f(n+rj) = \sum_{l=0}^d \frac{(rj)^l}{l!}f^{(l)}(n) + \frac{(rj)^{d+1}}{(d+1)!}f^{(d+1)}(n+\theta).
		\label{eq05}
	\end{equation}
	The falling factorials satisfy the formula $x^n=\sum_{i=0}^n S(n,i)(x)_i$, 
	where $S(n,i)$, $i\in[0,n]_{\mathbb{Z}}$, denote the Stirling numbers of the second kind.
	Thus, \eqref{eq05} can be rewritten as 
	\[
	f(n+rj) = \sum_{i=0}^d a_i\binom{j}{i} + \frac{(rj)^{d+1}}{(d+1)!}f^{(d+1)}(n+\theta),
	\]
	where 
	\begin{equation}
		a_i=a_i(n) \coloneqq \sum_{l=i}^d \frac{r^l}{l!}f^{(l)}(n)S(l,i)i!.
		\label{eq17}
	\end{equation}
	For convenience, we set $s_0=0$ in this proof.
	For every $\mathbf{s}=(s_i)_{i=1}^d\in\mathbb{Z}^d$, $n\in\mathbb{N}$ and $j\in[1,k)_{\mathbb{Z}}$, 
	we have 
	\begin{equation}
		f(n+rj) = \sum_{i=0}^d (\lfloor{a_i}\rfloor - s_i)\binom{j}{i} + \delta_{\mathbf{s}},
		\label{eq06}
	\end{equation}
	where 
	\[
	\delta_{\mathbf{s}} = \delta_{\mathbf{s}}(n,j)
	\coloneqq \sum_{i=0}^d (\{a_i\} + s_i)\binom{j}{i} + \frac{(rj)^{d+1}}{(d+1)!}f^{(d+1)}(n+\theta).
	\]
	Let $\epsilon\in(0,1/2)$ be arbitrary.
	Thanks to (A1), 
	we can take $x_0\ge1$ such that every $x\ge x_0$ satisfies 
	\begin{equation*}
		\frac{(r(k-1))^{d+1}}{(d+1)!}\abs{f^{(d+1)}(x)} \le \epsilon.
	\end{equation*}
	\par
	Now, let us show that 
	\begin{equation}
		\liminf_{N\to\infty} \frac{1}{N}
		\#\{ n\in[1,N]_{\mathbb{Z}} : (\lfloor{f(n+rj)}\rfloor)_{j=0}^{k-1}\in\cP_{k,d} \}
		\ge \mu(\cC_{k,d+1}^{-}(\epsilon)), \label{eq07}
	\end{equation}
	where the convex set $\cC_{k,d+1}^{-}(\epsilon)$ is defined as 
	\begin{equation}
		\cC_{k,d+1}^{-}(\epsilon) = \biggl\{ (y_i)_{i=0}^d\in\mathbb{R}^{d+1} : 
		0\le y_0<1,\ \epsilon\le\sum_{i=0}^d \binom{j}{i}y_i<1-\epsilon\ (\forall j\in[1,k)_{\mathbb{Z}}) \biggr\}.
		\label{eqC-}
	\end{equation}
	If the relations $\mathbf{s}=(s_i)_{i=1}^d\in\mathbb{Z}^d$, $n\ge x_0$, 
	and $(\{a_i(n)\}+s_i)_{i=0}^d\in\cC_{k,d+1}^{-}(\epsilon)$ hold, 
	then $0\le\delta_{\mathbf{s}}(n,j)<1$ and 
	\[
	\lfloor{f(n+rj)}\rfloor = \sum_{i=0}^d (\lfloor{a_i(n)}\rfloor - s_i)\binom{j}{i}
	\]
	for all $j\in[1,k)_{\mathbb{Z}}$.
	This implies the inclusion relation 
	\begin{align}
		&\quad \bigcup_{s_1,\ldots,s_d\in\mathbb{Z}}
		\{ n\in[x_0,\infty)_{\mathbb{Z}} : (\{a_i(n)\}+s_i)_{i=0}^d\in\cC_{k,d+1}^{-}(\epsilon) \}
		\label{eq04}\\
		&\subset \{ n\in\mathbb{N} : (\lfloor{f(n+rj)}\rfloor)_{j=0}^{k-1}\in\cP_{k,d} \}\nonumber.
	\end{align}
	The union \eqref{eq04} is disjoint because 
	\begin{enumerate}
		\item
		the vectors $(\binom{j}{i})_{i=1}^d\in\mathbb{R}^d$, $j\in[1,k)_{\mathbb{Z}}$, span $\mathbb{R}^d$;
		\item
		thus, if $(s_i)_{i=1}^d,(s'_i)_{i=1}^d\in\mathbb{Z}^d$ are not equal to each other, 
		then $\sum_{i=1}^d \binom{j}{i}(s_i-s'_i)$ is a non-zero integer for some $j\in[1,k)_{\mathbb{Z}}$.
	\end{enumerate}
	Also, the vectors $\mathbf{a}(n) \coloneqq (a_0(n),a_1(n),\ldots,a_d(n))$, $n\in\mathbb{N}$, 
	can be expressed as 
	\begin{equation*}
		\mathbf{a}(n) = ( f(n), f'(n), f''(n)/2!, \ldots, f^{(d)}(n)/d! )\mathbf{A}
	\end{equation*}
	by using the integer matrix $\mathbf{A} = (a_{ij})_{0\le i,j\le d}$ 
	whose entry $a_{ij}$ is equal to $r^iS(i,j)j!$ if $i\ge j$, and zero if $i<j$.
	Note that $\mathbf{A}$ has full rank.
	Since $(\mathbf{a}(n))_{n=1}^\infty$ is uniformly distributed modulo $1$ thanks to (A2), 
	it turns out that 
	\begin{equation}
	\begin{split}
		&\quad \liminf_{N\to\infty} \frac{1}{N}
		\#\{ n\in[1,N]_{\mathbb{Z}} : (\lfloor{f(n+rj)}\rfloor)_{j=0}^{k-1}\in\cP_{k,d} \}\\
		&\ge \liminf_{N\to\infty} \sum_{s_1,\ldots,s_d\in\mathbb{Z}} \frac{1}{N}
		\#\{ n\in[x_0,N]_{\mathbb{Z}} : (\{a_i(n)\}+s_i)_{i=0}^d\in\cC_{k,d+1}^{-}(\epsilon) \}\\
		&= \sum_{s_1,\ldots,s_d\in\mathbb{Z}} \mu\Bigl( \cC_{k,d+1}^{-}(\epsilon)\cap\prod_{i=0}^d [s_i,s_i+1) \Bigr)
		= \mu(\cC_{k,d+1}^{-}(\epsilon)),
	\end{split}\label{sumC-}
	\end{equation}
	where all the sums in \eqref{sumC-} are finite sums because of the boundedness of $\cC_{k,d+1}^{-}(\epsilon)$.
	Therefore, \eqref{eq07} holds.
	\par
	Next, let us show that 
	\begin{equation}
		\limsup_{N\to\infty} \frac{1}{N}
		\#\{ n\in[1,N]_{\mathbb{Z}} : (\lfloor{f(n+rj)}\rfloor)_{j=0}^{k-1}\in\cP_{k,d} \}
		\le \mu(\cC_{k,d+1}^{+}(\epsilon)), \label{eq08}
	\end{equation}
	where the convex set $\cC_{k,d+1}^{+}(\epsilon)$ is defined as 
	\begin{equation}
		\cC_{k,d+1}^{+}(\epsilon) = \biggl\{ (y_i)_{i=0}^d\in\mathbb{R}^{d+1} : 
		0\le y_0<1,\ -\epsilon\le\sum_{i=0}^d \binom{j}{i}y_i<1+\epsilon\ (\forall j\in[1,k)_{\mathbb{Z}}) \biggr\}.
		\label{eqC+}
	\end{equation}
	Take an arbitrary integer $m\ge x_0$ such that $(\Delta_r^d\lfloor{f(m+rj)}\rfloor)_{j=0}^{k-d-1}$ is a constant sequence.
	Then the sequence $(\lfloor{f(m+rj)}\rfloor)_{j=0}^{k-1}$ is expressed as 
	\[
	\lfloor{f(m+rj)}\rfloor = \sum_{i=0}^d \Delta_r^i\lfloor{f(m)}\rfloor\cdot\binom{j}{i}
	\]
	due to Newton's forward difference formula.
	Recalling the definition of $a_i(m)$ and 
	putting $s_i=\lfloor{a_i(m)}\rfloor-\Delta_r^{i}\lfloor{f(m)}\rfloor$ for $i\in[1,d]_{\mathbb{Z}}$, 
	we have that $\lfloor{a_0(m)}\rfloor=\lfloor{f(m)}\rfloor$ and 
	\[
	\lfloor{f(m+rj)}\rfloor = \sum_{i=0}^d (\lfloor{a_i(m)}\rfloor-s_i)\binom{j}{i}
	\quad(\forall j\in[0,k)_{\mathbb{Z}}).
	\]
	This and \eqref{eq06} imply that 
	$f(m+rj) = \lfloor{f(m+rj)}\rfloor + \delta_{\mathbf{s}}(m,j)$ 
	and $0\le\delta_{\mathbf{s}}(m,j)<1$ for all $j\in[1,k)_{\mathbb{Z}}$, 
	whence $(\{a_i(m)\}+s_i)_{i=0}^d\in\cC_{k,d+1}^{+}(\epsilon)$.
	Therefore, we obtain the inclusion relation 
	\begin{align*}
		&\quad \{ n\in[x_0,\infty)_{\mathbb{Z}} : (\lfloor{f(n+rj)}\rfloor)_{j=0}^{k-1}\in\cP_{k,d} \}\\
		&\subset \bigcup_{s_1,\ldots,s_d\in\mathbb{Z}}
		\{ n\in\mathbb{N} : (\{a_i(n)\}+s_i)_{i=0}^d\in\cC_{k,d+1}^{+}(\epsilon) \}.
	\end{align*}
	Since $(\mathbf{a}(n))_{n=1}^\infty$ is uniformly distributed modulo $1$ thanks to (A2), 
	it turns out that 
	\begin{equation}
	\begin{split}
		&\quad \limsup_{N\to\infty} \frac{1}{N}
		\#\{ n\in[1,N]_{\mathbb{Z}} : (\lfloor{f(n+rj)}\rfloor)_{j=0}^{k-1}\in\cP_{k,d} \}\\
		&= \limsup_{N\to\infty} \frac{1}{N}
		\#\{ n\in[x_0,N]_{\mathbb{Z}} : (\lfloor{f(n+rj)}\rfloor)_{j=0}^{k-1}\in\cP_{k,d} \}\\
		&\le \limsup_{N\to\infty} \sum_{s_1,\ldots,s_d\in\mathbb{Z}} \frac{1}{N}
		\#\{ n\in[1,N]_{\mathbb{Z}} : (\{a_i(n)\}+s_i)_{i=0}^d\in\cC_{k,d+1}^{+}(\epsilon) \}\\
		&= \sum_{s_1,\ldots,s_d\in\mathbb{Z}} \mu\Bigl( \cC_{k,d+1}^{+}(\epsilon)\cap\prod_{i=0}^d [s_i,s_i+1) \Bigr)
		= \mu(\cC_{k,d+1}^{+}(\epsilon)),
	\end{split}\label{sumC+}
	\end{equation}
	which is just \eqref{eq08}.
	\par
	Finally, once letting $\epsilon\to+0$ in \eqref{eq07} and \eqref{eq08}, 
	we conclude the limit in Proposition~\ref{main0'}.
	Also, the inequality $\mu(\cC_{k,d+1})\ge1/\prod_{i=1}^d \binom{k-1}{i}$ 
	is derived from the lemma below.
\end{proof}

\begin{lemma}\label{conv-set-lb}
	Let $k\ge d+2$ and $d\ge1$ be integers.
	Then $\mu(\cC_{k,d+1})\ge1/\prod_{i=1}^d \binom{k-1}{i}$.
\end{lemma}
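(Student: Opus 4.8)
\emph{Plan.} The strategy is to exhibit an explicit full-dimensional parallelepiped $\Pi\subseteq\cC_{k,d+1}$ of Lebesgue measure exactly $\bigl(\prod_{i=1}^d\binom{k-1}{i}\bigr)^{-1}$; monotonicity of $\mu$ then yields the lemma. First I would note that $\binom{0}{i}=0$ for $i\ge1$, so the constraint $0\le y_0<1$ is precisely the $j=0$ instance of the remaining ones; hence
\[
\cC_{k,d+1}=\Bigl\{\,y\in\mathbb{R}^{d+1}:\ 0\le\langle b_j,y\rangle<1\ \ \text{for all }j\in[0,k)_{\mathbb{Z}}\,\Bigr\},\qquad b_j:=\bigl(\tbinom{j}{0},\tbinom{j}{1},\dots,\tbinom{j}{d}\bigr).
\]
Since $k-1\ge d+1$, the numbers $\binom{k-1}{i}$ are positive for $0\le i\le d$. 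I would then define $c_0,\dots,c_d\in\mathbb{R}^{d+1}$ by $(c_m)_i=\binom{k-1}{i}$ for $0\le i\le m$ and $(c_m)_i=0$ for $i>m$, and put $\Pi:=\{\,y\in\mathbb{R}^{d+1}:\ 0\le\langle c_m,y\rangle<1\ \text{for all }m\in[0,d]_{\mathbb{Z}}\,\}$.

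The key step is to show that each $b_j$ is a convex combination of $c_0,\dots,c_d$. To this end, set $\rho_i(j):=\binom{j}{i}/\binom{k-1}{i}$ for $1\le i\le d$ and $\rho_0(j):=1$. The crux is the monotone chain
\[
1=\rho_0(j)\ge\rho_1(j)\ge\cdots\ge\rho_d(j)\ge0\qquad(j\in[0,k)_{\mathbb{Z}}),
\]
which follows from $\rho_1(j)=j/(k-1)\le1$, from $\rho_{i+1}(j)/\rho_i(j)=(j-i)/(k-1-i)\le1$ whenever $\rho_i(j)>0$ (while $\rho_i(j)=\rho_{i+1}(j)=0$ when $j<i$), and from non-negativity of binomial coefficients. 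Granting this, the numbers $\lambda_{j,m}:=\rho_m(j)-\rho_{m+1}(j)$ for $0\le m\le d-1$ together with $\lambda_{j,d}:=\rho_d(j)$ are non-negative, telescope to $\sum_{m=0}^d\lambda_{j,m}=\rho_0(j)=1$, and satisfy $\sum_{m=i}^d\lambda_{j,m}=\rho_i(j)$. Consequently the $i$-th coordinate of $\sum_{m=0}^d\lambda_{j,m}c_m$ is $\binom{k-1}{i}\sum_{m=i}^d\lambda_{j,m}=\binom{k-1}{i}\rho_i(j)=\binom{j}{i}$, i.e.\ $\sum_m\lambda_{j,m}c_m=b_j$.

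With that identity in hand, for every $y\in\Pi$ and every $j\in[0,k)_{\mathbb{Z}}$ one gets $\langle b_j,y\rangle=\sum_m\lambda_{j,m}\langle c_m,y\rangle$, a convex combination of numbers in $[0,1)$ and hence itself in $[0,1)$; therefore $\Pi\subseteq\cC_{k,d+1}$. Finally $\Pi$ is the preimage of $[0,1)^{d+1}$ under the linear map whose matrix $C$ has rows $c_0,\dots,c_d$; this $C$ is lower triangular with diagonal entries $\binom{k-1}{0},\binom{k-1}{1},\dots,\binom{k-1}{d}$, so $\lvert\det C\rvert=\prod_{m=0}^d\binom{k-1}{m}=\prod_{i=1}^d\binom{k-1}{i}$ and $\mu(\Pi)=\lvert\det C\rvert^{-1}$, which is the claimed lower bound for $\mu(\cC_{k,d+1})$.

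I expect the only genuinely non-routine point to be finding the vectors $c_m$ — equivalently, observing that rescaling the $i$-th coordinate by $\binom{k-1}{i}^{-1}$ carries all the $b_j$ into the order simplex $\{x\in\mathbb{R}^d:1\ge x_1\ge\cdots\ge x_d\ge0\}$ of volume $1/d!$; the monotone chain above is exactly what makes this work, and everything after it is telescoping plus a triangular determinant. As a sanity check, for $d=1$ this produces $c_0=(1,0)$, $c_1=(1,k-1)$ with in fact $\Pi=\cC_{k,2}$, so the bound is attained, matching the value $1/(k-1)$ in Corollary~\ref{main00'}.
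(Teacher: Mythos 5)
Your proof is correct and follows essentially the same route as the paper: your parallelepiped $\Pi$ is exactly the paper's auxiliary set $\cC'_{k,d+1}$, your convex-combination coefficients $\lambda_{j,m}=\rho_m(j)-\rho_{m+1}(j)$ are the paper's $c_l$, and the volume computation via the triangular determinant is the paper's ``easy calculation.'' Nothing further is needed.
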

\begin{proof}
	Define the convex set $\cC'_{k,d+1}$ as 
	\[
	\cC'_{k,d+1} =
	\biggl\{ (y_0,y_1,\ldots,y_d)\in\mathbb{R}^{d+1} : 0\le y_0<1,\ 0\le \sum_{i=0}^j \binom{k-1}{i}y_i < 1\ (\forall j\in[1,d]_{\mathbb{Z}}) \biggr\}.
	\]
	We show the inclusion relation $\cC'_{k,d+1}\subset\cC_{k,d+1}$.
	Let $(y_0,y_1,\ldots,y_d)\in\cC'_{k,d+1}$ and $j\in[1,k)_{\mathbb{Z}}$.
	Set the real numbers $c_0,c_1,\ldots,c_d\ge0$ as 
	\[
	c_l =
	\begin{dcases}
		\binom{j}{l}\binom{k-1}{l}^{-1} - \binom{j}{l+1}\binom{k-1}{l+1}^{-1} & l\in[0,d)_{\mathbb{Z}},\\
		\binom{j}{d}\binom{k-1}{d}^{-1} & l=d.
	\end{dcases}
	\]
	Then the inequality $0\le\sum_{i=0}^d \binom{j}{i}y_i<1$ in the definition of $\cC_{k,d+1}$ is equal to 
	the sum of the inequalities $0\le\sum_{i=0}^l \binom{k-1}{i}y_i<1$, $l\in[0,d]_{\mathbb{Z}}$, multiplied by $c_l$: 
	\begin{align*}
		&\quad \sum_{l=0}^d c_l\sum_{i=0}^l \binom{k-1}{i}y_i
		= \sum_{i=0}^d \binom{k-1}{i}y_i\sum_{l=i}^d c_l\\
		&= \sum_{i=0}^d \binom{k-1}{i}y_i\binom{j}{i}\binom{k-1}{i}^{-1}
		= \sum_{i=0}^d \binom{j}{i}y_i.
	\end{align*}
	Since $j\in[1,k)_{\mathbb{Z}}$ is arbitrary, 
	the point $(y_0,y_1,\ldots,y_d)$ lies in $\cC_{k,d+1}$.
	Therefore, $\cC'_{k,d+1}\subset\cC_{k,d+1}$.
	Finally, we conclude that $\mu(\cC_{k,d+1})\ge\mu(\cC'_{k,d+1})=1/\prod_{i=1}^d \binom{k-1}{i}$ by easy calculation.
\end{proof}

\begin{remark}
	Let $f(x)=x\log x$.
	Then the sequence $\bigl( (f(n),f'(n)) \bigr)_{n=1}^\infty$ is not uniformly distributed modulo $1$ 
	because $(f'(n))_{n=1}^\infty$ does not satisfy the second condition in Proposition~\ref{ud-Hardy}.
	However, one can show that for every convex set $\cC\subset[0,1)^2$ and every $r\in\mathbb{N}$, 
	\begin{align*}
		&\quad \frac{1}{N}\#\{ n\in[1,N]_{\mathbb{Z}} : (\{f(n)\},\{rf'(n)\})\in\cC \}\\
		&= \iint_\cC \Bigl( \mathbf{1}_{\le\{r\log N\}}(y) + \frac{1}{e^{1/r}-1} \Bigr)\frac{e^{(y-\{r\log N\})/r}}{r}\,dxdy
		\quad(N\to\infty),
	\end{align*}
	where $\mathbf{1}_{\le c}(y)=1$ if $y\le c$, 
	and $\mathbf{1}_{\le c}(y)=0$ if $y>c$.
	This implies that for every convex set $\cC\subset[0,1)^2$ and every $r\in\mathbb{N}$, 
	\begin{align*}
		&\quad \frac{1}{(e^{1/r}-1)r}\mu(\cC)
		\le \liminf_{N\to\infty} \frac{1}{N}\#\{ n\in[1,N]_{\mathbb{Z}} : (\{f(n)\},\{rf'(n)\})\in\cC \}\\
		&\le \limsup_{N\to\infty} \frac{1}{N}\#\{ n\in[1,N]_{\mathbb{Z}} : (\{f(n)\},\{rf'(n)\})\in\cC \}
		\le \frac{e^{1/r}}{(e^{1/r}-1)r}\mu(\cC).
	\end{align*}
	Hence, it follows that for all integers $k\ge3$ and $r\ge1$, 
	\begin{align*}
		&\quad \frac{1}{(e^{1/r}-1)r(k-1)}
		\le \liminf_{N\to\infty} \frac{1}{N}
		\#\{ n\in[1,N]_{\mathbb{Z}} : (\lfloor{f(n+rj)}\rfloor)_{j=0}^{k-1}\in\cP_{k,1} \}\\
		&\le \limsup_{N\to\infty} \frac{1}{N}
		\#\{ n\in[1,N]_{\mathbb{Z}} : (\lfloor{f(n+rj)}\rfloor)_{j=0}^{k-1}\in\cP_{k,1} \}
		\le \frac{e^{1/r}}{(e^{1/r}-1)r(k-1)}
	\end{align*}
	in the same way as the proof of Proposition~\ref{main0'}.
	The above both-hand sides converge to $1/(k-1)$ as $r\to\infty$.
\end{remark}

Next, to prove Proposition~\ref{main1'}, 
we need to evaluate exponential sums $\sum_{r=1}^R e(p(r))$ for polynomials $p(x)$.
Such an evaluation is achieved by induction on the degree of $p(x)$.
The following lemma is often used to make the degree of a polynomial decrease.

\begin{lemma}\label{diff-ineq}
	Let $z_1,z_2,\ldots,z_N\in\mathbb{C}$ and $H\in[1,N]_{\mathbb{Z}}$.
	Then 
	\[
	\abs{\sum_{n=1}^N z_n}^2
	\le \frac{N+H-1}{H^2}\Bigl( H\sum_{n=1}^N |z_n|^2
	+ 2\sum_{h=1}^{H-1} (H-h)\Re\sum_{n=1}^{N-h} z_{n+h}\overline{z}_n \Bigr).
	\]
\end{lemma}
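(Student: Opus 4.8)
The plan is to prove this by the classical van der Corput / Weyl differencing argument: duplicate the sum $\sum z_n$ into a double sum over short blocks of length $H$, apply the Cauchy--Schwarz inequality once, and then expand the resulting square and re-collect terms according to the size of the shift.

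First I would extend the sequence by setting $z_n=0$ for all integers $n\le0$ and $n>N$, so that sums over $n$ (or an auxiliary index $m$) may be taken over all of $\mathbb{Z}$ without changing their values. The key identity is
\[
H\sum_{n\in\mathbb{Z}} z_n = \sum_{m\in\mathbb{Z}}\sum_{h=0}^{H-1} z_{m+h},
\]
since each index $n$ contributes to exactly $H$ values of $m$, namely $m=n-H+1,\ldots,n$. Then I would observe that the block sum $\sum_{h=0}^{H-1} z_{m+h}$ vanishes unless the window $[m,m+H-1]$ meets $[1,N]$, i.e.\ unless $2-H\le m\le N$, a set of exactly $N+H-1$ integers. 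Applying Cauchy--Schwarz to this sum of $N+H-1$ terms gives
\[
H^2\Bigl|\sum_{n} z_n\Bigr|^2 \le (N+H-1)\sum_{m}\Bigl|\sum_{h=0}^{H-1} z_{m+h}\Bigr|^2 .
\]

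Next I would expand the inner square as $\sum_{h_1=0}^{H-1}\sum_{h_2=0}^{H-1} z_{m+h_1}\overline{z_{m+h_2}}$ and sum over $m\in\mathbb{Z}$. Substituting $n=m+h_2$ and writing $h=h_1-h_2$, the $m$-sum becomes $\sum_n z_{n+h}\overline{z_n}$, which depends only on $h$; the number of pairs $(h_1,h_2)\in[0,H-1]_{\mathbb{Z}}^2$ with $h_1-h_2=h$ is $H-|h|$. This yields
\[
\sum_{m}\Bigl|\sum_{h=0}^{H-1} z_{m+h}\Bigr|^2 = \sum_{|h|\le H-1}(H-|h|)\sum_{n} z_{n+h}\overline{z}_n .
\]
The $h=0$ term equals $H\sum_{n=1}^N|z_n|^2$. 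For $h\ge1$, the $h$ and $-h$ terms are complex conjugates of one another (reindex $n\mapsto n+h$ in the $-h$ term), so they combine to $2(H-h)\Re\sum_n z_{n+h}\overline{z}_n$, and the support condition on $z$ restricts $n$ to $[1,N-h]_{\mathbb{Z}}$. Substituting this expression back into the Cauchy--Schwarz bound and dividing by $H^2$ produces exactly the claimed inequality.

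I do not expect a genuine obstacle here, since this is the standard differencing lemma; the only points requiring care are (i) checking that the effective range of $m$ has precisely $N+H-1$ elements, so the constant appears in the stated form, and (ii) the index substitutions showing that $\sum_m z_{m+h_1}\overline{z_{m+h_2}}$ depends only on $h_1-h_2$ and that the off-diagonal contributions pair up into real parts. Both are routine once the zero-extension convention is fixed at the outset.
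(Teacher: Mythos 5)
Your proof is correct and complete: it is the standard van der Corput differencing argument (zero-extension, blocking identity $H\sum_n z_n=\sum_m\sum_{h=0}^{H-1}z_{m+h}$, Cauchy--Schwarz over the $N+H-1$ relevant windows, and pairing the $\pm h$ off-diagonal terms into real parts), and all the bookkeeping — the count $N+H-1$, the multiplicity $H-|h|$, and the truncation of the inner sum to $n\le N-h$ — checks out. The paper does not prove this lemma itself but simply cites Lemma~3.1 of Kuipers--Niederreiter, whose proof is essentially the one you have written out.
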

\begin{proof}
	See \cite[Lemma~3.1]{KN}.
\end{proof}

\begin{lemma}\label{lem:exp-sum}
	Let $N_m,R_m\in\mathbb{N}$ diverge to positive infinity as $m\to\infty$, 
	and $d\ge0$ be an integer.
	For $n\in\mathbb{N}$, let $q_n(x)$ be a polynomial of degree less than $d$; 
	let $c_n\in\mathbb{R}$ and $p_n(x) = c_nx^d + q_n(x)$.
	If $(c_n)_{n=1}^\infty$ is uniformly distributed modulo $1$, 
	then 
	\[
	\lim_{m\to\infty} \frac{1}{N_mR_m}\sum_{n=1}^{N_m} \sum_{r=1}^{R_m} e(p_n(r)) = 0.
	\]
\end{lemma}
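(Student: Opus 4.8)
The plan is to prove the lemma by induction on $d$, combining Cauchy--Schwarz with the Weyl-differencing inequality of Lemma~\ref{diff-ineq}. The base case $d=0$ is immediate: a polynomial of degree less than $0$ is identically zero, so $p_n(x)=c_n$ and $\sum_{r=1}^{R_m} e(p_n(r))=R_m\,e(c_n)$; hence the normalized double sum equals $\frac{1}{N_m}\sum_{n=1}^{N_m} e(c_n)$, which tends to $0$ by Weyl's criterion \eqref{Weyl} (with $\mathbf{h}=1$) since $(c_n)_{n=1}^\infty$ is uniformly distributed modulo $1$.

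For the inductive step, fix a positive integer $H$ (to be sent to infinity only at the very end); for $m$ large we have $H\le R_m$. Write $S_m=\sum_{n=1}^{N_m}\sum_{r=1}^{R_m} e(p_n(r))$. By Cauchy--Schwarz, $|S_m|^2\le N_m\sum_{n=1}^{N_m}\bigl|\sum_{r=1}^{R_m} e(p_n(r))\bigr|^2$. Applying Lemma~\ref{diff-ineq} to each inner sum with $z_r=e(p_n(r))$ and the chosen $H$, and using $z_{r+h}\overline{z_r}=e\bigl(p_n(r+h)-p_n(r)\bigr)$, then summing over $n$, gives
\[
|S_m|^2 \le \frac{N_m(R_m+H-1)}{H^2}\Bigl( H R_m N_m + 2\sum_{h=1}^{H-1}(H-h)\,\Re\sum_{n=1}^{N_m}\sum_{r=1}^{R_m-h} e\bigl(p_n(r+h)-p_n(r)\bigr) \Bigr).
\]

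Now comes the point where the induction feeds back in. For fixed $h$, the polynomial $r\mapsto p_n(r+h)-p_n(r)$ has the shape $(d h c_n)r^{d-1}+\tilde q_n(r)$ with $\deg\tilde q_n<d-1$, because $(r+h)^d-r^d=d h r^{d-1}+\cdots$ while $q_n(r+h)-q_n(r)$ has degree at most $\deg q_n-1<d-1$. Since $dh$ is a nonzero integer, the sequence $(d h c_n)_{n=1}^\infty$ is again uniformly distributed modulo $1$ by Weyl's criterion; hence the inductive hypothesis, applied with $d-1$ in place of $d$ and with the parameters $N_m$ and $R_m-h$, yields $\frac{1}{N_m(R_m-h)}\sum_{n,r}e\bigl(p_n(r+h)-p_n(r)\bigr)\to 0$, and therefore also $\frac{1}{N_m R_m}\sum_{n,r}e\bigl(p_n(r+h)-p_n(r)\bigr)\to 0$ as $m\to\infty$, for each fixed $h$.

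Dividing the displayed inequality by $(N_m R_m)^2$, the first term contributes $\frac{R_m+H-1}{H R_m}\to\frac1H$, while the second term is a fixed finite linear combination over $h=1,\dots,H-1$ of quantities tending to $0$ (with bounded coefficients, since $\frac{2(R_m+H-1)}{H^2 R_m}$ is bounded), so it vanishes in the limit. Hence $\limsup_{m\to\infty}|S_m|^2/(N_m R_m)^2\le 1/H$, and letting $H\to\infty$ gives $S_m/(N_m R_m)\to 0$, as desired. The only delicate point is the order of the two limiting operations: $H$ must be held fixed while $m\to\infty$ so that the error sum over $h$ remains finite, and only afterwards is $H$ sent to infinity. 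This is the standard van der Corput scheme, and I expect it to be the sole mildly subtle step; the verification that differencing lowers the degree by exactly one and preserves uniform distribution of the leading coefficient is routine.
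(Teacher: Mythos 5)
Your proof is correct and follows essentially the same route as the paper's: induction on $d$, with the inductive step carried out by Cauchy--Schwarz plus Lemma~\ref{diff-ineq}, the observation that $\Delta_h p_n$ has leading coefficient $dhc_n$ (still uniformly distributed), and the order of limits $m\to\infty$ first, then $H\to\infty$. No gaps; only a cosmetic slip in the stated bound for the coefficient of the $h$-sum (it is at most $2(R_m+H-1)/(HR_m)$ after accounting for the factor $H-h$), which does not affect the argument.
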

\begin{proof}
	We show the desired statement by induction on $d$.
	First, assume $d=0$.
	Then $p_n(x)=c_n$ for all $n\in\mathbb{N}$, and thus 
	the uniform distribution modulo $1$ of $(c_n)_{n=1}^\infty$ implies that 
	\[
	\lim_{m\to\infty} \frac{1}{N_mR_m}\sum_{n=1}^{N_m} \sum_{r=1}^{R_m} e(p_n(r))
	= \lim_{m\to\infty} \frac{1}{N_m}\sum_{n=1}^{N_m} e(c_n) = 0.
	\]
	\par
	Next, assuming that the desired statement is true for $d-1$ with $d\ge1$, 
	we show that the desired statement is also true for $d$.
	Take an arbitrary $H\in\mathbb{N}$.
	Lemma~\ref{diff-ineq} yields the inequality 
	\[
	\abs{\sum_{r=1}^{R_m} e(p_n(r))}^2
	\le \frac{R_m+H-1}{H^2}\Bigl( HR_m + 2\sum_{h=1}^{H-1} (H-h)\Re\sum_{r=1}^{R_m-h} e(\Delta_h p_n(r)) \Bigr).
	\]
	The above and Cauchy-Schwarz inequalities imply that 
	\begin{equation}
	\begin{split}
		&\quad \abs{\frac{1}{N_mR_m}\sum_{n=1}^{N_m} \sum_{r=1}^{R_m} e(p_n(r))}^2
		\le \frac{1}{N_mR_m^2}\sum_{n=1}^{N_m} \abs{\sum_{r=1}^{R_m} e(p_n(r))}^2\\
		&\le \frac{R_m+H-1}{H^2N_mR_m^2}\Biggl( HN_mR_m + 2\sum_{h=1}^{H-1} (H-h)\Re\sum_{n=1}^{N_m} \sum_{r=1}^{R_m-h} e(\Delta_h p_n(r)) \Biggr)\\
		&\le \frac{R_m+H}{H^2N_mR_m^2}\Biggl( HN_mR_m + 2H\sum_{h=1}^{H-1} \abs{\sum_{n=1}^{N_m} \sum_{r=1}^{R_m-h} e(\Delta_h p_n(r))} \Biggr)\\
		&\le \frac{R_m+H}{HR_m}\Biggl( 1 + 2\sum_{h=1}^{H-1} \abs{\frac{1}{N_mR_m}\sum_{n=1}^{N_m} \sum_{r=1}^{R_m-h} e(\Delta_h p_n(r))} \Biggr).
	\end{split}\label{eq19}
	\end{equation}
	Now, for all $h,n\in\mathbb{N}$, 
	the polynomial $\Delta_h p_n(x)$ is expressed as $dhc_nx^{d-1} + q_{h,n}(x)$, 
	where the degree of $q_{h,n}(x)$ is less than $d-1$.
	Since $(c_n)_{n=1}^\infty$ is uniformly distributed modulo $1$, 
	so is $(dhc_n)_{n=1}^\infty$ for every $h\in\mathbb{N}$.
	Thus, the hypothesis by induction implies that for every $h\in\mathbb{N}$ 
	\[
	\lim_{m\to\infty} \frac{1}{N_mR_m}\sum_{n=1}^{N_m} \sum_{r=1}^{R_m-h} e(\Delta_h p_n(r)) = 0.
	\]
	It follows from \eqref{eq19} that 
	\[
	\limsup_{m\to\infty} \abs{\frac{1}{N_mR_m}\sum_{n=1}^{N_m} \sum_{r=1}^{R_m} e(p_n(r))}^2
	\le 1/H.
	\]
	Due to the arbitrariness of $H$, 
	we find that the desired statement is true for $d$.
\end{proof}

\begin{lemma}\label{lem:Weyl}
	Let $d\ge0$ be an integer and $\mathbf{A}$ be an integer matrix of order $d+1$ and rank $d+1$; 
	let $\mathbf{x}(n)=(x_0(n),x_1(n),\ldots,x_d(n))\in\mathbb{R}^{d+1}$ and 
	\[
	\mathbf{y}(n,r) = (y_0(n,r),y_1(n,r),\ldots,y_d(n,r))
	= (x_0(n),rx_1(n),\ldots,r^dx_d(n))\mathbf{A}
	\]
	for $n,r\in\mathbb{N}$.
	If $N_m,R_m\in\mathbb{N}$ diverge to positive infinity as $m\to\infty$ 
	and if each entry $(x_i(n))_{n=1}^\infty$ of $(\mathbf{x}(n))_{n=1}^\infty$ is uniformly distributed modulo $1$, 
	then for every convex set $\cC\subset[0,1)^{d+1}$, 
	\begin{equation*}
		\lim_{m\to\infty} \frac{\#\bigl\{ (n,r)\in[1,N_m]_{\mathbb{Z}}\times[1,R_m]_{\mathbb{Z}} : \{\mathbf{y}(n,r)\}\in\cC \bigr\}}{N_mR_m}
		= \mu(\cC),
	\end{equation*}
	where $\mu$ denotes the Lebesgue measure on $\mathbb{R}^{d+1}$.
\end{lemma}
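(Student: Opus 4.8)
The plan is to prove the two-dimensional analogue of Weyl's criterion for the family $(\mathbf{y}(n,r))$, counted over the rectangles $[1,N_m]_{\mathbb{Z}}\times[1,R_m]_{\mathbb{Z}}$, and then to recognise the resulting double exponential sum as an instance of Lemma~\ref{lem:exp-sum}. For each $m$, let $\nu_m$ be the push-forward under $(n,r)\mapsto\{\mathbf{y}(n,r)\}$ of the uniform probability measure on $[1,N_m]_{\mathbb{Z}}\times[1,R_m]_{\mathbb{Z}}$, regarded as a probability measure on $[0,1)^{d+1}$. I would first recall that $\nu_m(\cC)\to\mu(\cC)$ for every convex $\cC\subset[0,1)^{d+1}$---the boundary of such a $\cC$ being Lebesgue-null---follows exactly as in the one-dimensional case recalled around \eqref{eq13}--\eqref{Weyl} (see also \cite{KN}), provided that
\[
\lim_{m\to\infty}\frac{1}{N_mR_m}\sum_{n=1}^{N_m}\sum_{r=1}^{R_m}e(\langle\mathbf{h},\mathbf{y}(n,r)\rangle)=0
\]
for every non-zero $\mathbf{h}\in\mathbb{Z}^{d+1}$. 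Thus the whole problem reduces to this exponential-sum estimate.

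Next I would unwind the inner product. From $\mathbf{y}(n,r)=(x_0(n),rx_1(n),\ldots,r^dx_d(n))\mathbf{A}$ one obtains
\[
\langle\mathbf{h},\mathbf{y}(n,r)\rangle=\langle\mathbf{g},(x_0(n),rx_1(n),\ldots,r^dx_d(n))\rangle=\sum_{i=0}^d g_ir^ix_i(n),
\]
where $\mathbf{g}=\mathbf{h}\mathbf{A}^{\top}$; since $\mathbf{A}$ is an integer matrix of full rank, $\mathbf{g}$ is a non-zero element of $\mathbb{Z}^{d+1}$. Let $d'\in[0,d]_{\mathbb{Z}}$ be the largest index with $g_{d'}\neq0$. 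For each fixed $n$, the quantity $\sum_{i=0}^d g_ir^ix_i(n)=\sum_{i=0}^{d'}g_ir^ix_i(n)$ is a polynomial in $r$ of the form $p_n(r)=c_nr^{d'}+q_n(r)$, with leading coefficient $c_n=g_{d'}x_{d'}(n)$ and $\deg q_n<d'$ (where $q_n\equiv0$ when $d'=0$).

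Finally, since $(x_{d'}(n))_{n=1}^\infty$ is uniformly distributed modulo $1$ and $g_{d'}$ is a non-zero integer, the sequence $(c_n)_{n=1}^\infty=(g_{d'}x_{d'}(n))_{n=1}^\infty$ is uniformly distributed modulo $1$ as well (multiplication by a non-zero integer preserves uniform distribution modulo $1$, by \eqref{Weyl}). Applying Lemma~\ref{lem:exp-sum} with $d$ replaced by $d'$---its statement already permits the value $0$---then gives $\frac{1}{N_mR_m}\sum_{n=1}^{N_m}\sum_{r=1}^{R_m}e(p_n(r))\to0$, which is exactly what remained. I expect the only point requiring genuine care to be the first step, namely the transfer from the exponential-sum bound back to the convex-set count for a doubly-indexed sequence summed over arbitrary rectangles; but this is just the standard multi-index form of Weyl's equidistribution theorem, so essentially all of the real content has already been established in Lemma~\ref{lem:exp-sum}.
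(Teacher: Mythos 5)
Your proposal is correct and follows essentially the same route as the paper: reduce to the two-parameter Weyl criterion, write $\langle\mathbf{h},\mathbf{y}(n,r)\rangle$ as a polynomial in $r$ whose coefficients are $g_ix_i(n)$ with $\mathbf{g}=\mathbf{h}\mathbf{A}^{\top}\neq\mathbf{0}$ by full rank, pick the top nonvanishing index, and invoke Lemma~\ref{lem:exp-sum}. The paper's proof is identical in substance, with your $d'$ playing the role of its $i_0$.
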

\begin{proof}
	If the following criterion holds, 
	Lemma~\ref{lem:Weyl} follows in the same way as Weyl's theorem on uniform distribution.
	\textit{Weyl's criterion: for every non-zero $\mathbf{h}=(h_0,h_1,\ldots,h_d)\in\mathbb{Z}^{d+1}$,} 
	\begin{equation}
		\lim_{m\to\infty} \frac{1}{N_mR_m}\sum_{n=1}^{N_m} \sum_{r=1}^{R_m} e(\langle{\mathbf{y}(n,r),\mathbf{h}}\rangle)
		= 0. \label{eq:Weyl}
	\end{equation}
	Hence, taking a non-zero $\mathbf{h}\in\mathbb{Z}^{d+1}$ arbitrarily, 
	we show \eqref{eq:Weyl}.
	For $i,j\in[0,d]_{\mathbb{Z}}$, denote the $(i,j)$-th entry of $\mathbf{A}$ by $a_{ij}$.
	For $n\in\mathbb{N}$, regard $\langle{\mathbf{y}(n,r),\mathbf{h}}\rangle$ as a polynomial $p_n(r)$ of $r$: 
	\[
	p_n(r) = \langle{\mathbf{y}(n,r),\mathbf{h}}\rangle
	= \sum_{j=0}^d y_j(n,r)h_j
	= \sum_{i=0}^d r^ix_i(n)\sum_{j=0}^d a_{ij}h_j.
	\]
	Take the maximum number $i_0$ of all $i\in[0,d]_{\mathbb{Z}}$ 
	such that $\sum_{j=0}^d a_{ij}h_j$ is not zero 
	(such a number $i$ exists because the square matrix $\mathbf{A}$ has full rank).
	Then, for every $n\in\mathbb{N}$, the degree of $p_n(x)$ is at most $i_0$.
	Since $(x_{i_0}(n))_{n=1}^\infty$ is uniformly distributed modulo $1$, 
	so is the sequence $(x_{i_0}(n)\sum_{j=0}^d a_{i_0j}h_j)_{n=1}^\infty$.
	Therefore, Lemma~\ref{lem:exp-sum} implies \eqref{eq:Weyl}, 
	and we obtain Lemma~\ref{lem:Weyl}.
\end{proof}

Now, let us show Proposition~\ref{main1'}.
Since \eqref{eq:main1} consists of the following inequalities: 
\begin{align*}
	&\text{(liminf)} &
	&\liminf_{N\to\infty} \frac{\#\{ P\subset[1,N]_{\mathbb{Z}} : 
	P\in\cP_{k,1},\ (\lfloor{f(n)}\rfloor)_{n\in P}\in\cP_{k,d} \}}{Nf''(N)^{-1/(d+1)}}
	> 0,\\
	&\text{(limsup)} &
	&\limsup_{N\to\infty} \frac{\#\{ P\subset[1,N]_{\mathbb{Z}} : 
	P\in\cP_{k,1},\ (\lfloor{f(n)}\rfloor)_{n\in P}\in\cP_{k,d} \}}{Nf''(N)^{-1/(d+1)}}
	< \infty,
\end{align*}
we prove the above inequalities.
Also, note that $f^{(d+1)}(x)>0$ for every sufficiently large $x>0$ because of (B2).

\begin{proof}[Proof of Proposition~$\ref{main1'}$ (liminf)]
	Without loss of generality, 
	we may assume $n_0=1$.
	Fix integers $k\ge d+2$ and $d\ge1$, 
	and let $N\in\mathbb{N}$ be sufficiently large.
	Take arbitrary $\epsilon\in(0,1)$ and $0<\delta_1<\cdots<\delta_t<\delta_{t+1}=1$.
	Put 
	\[
	R_i = R_i(N) = \Bigl\lfloor{\frac{(\epsilon(d+1)!)^{1/(d+1)}}{k-1}f^{(d+1)}(\delta_i N)^{-1/(d+1)}}\Bigr\rfloor
	\]
	for $i\in[t]$ and $N\in\mathbb{N}$.
	Then all $x\ge\delta_i N$ and $r\in[1,R_i]_{\mathbb{Z}}$ satisfy 
	\begin{equation}
		0 < \frac{(r(k-1))^{d+1}}{(d+1)!}f^{(d+1)}(x)
		\overset{\text{(B1)}}{\le} \frac{(R_i(k-1))^{d+1}}{(d+1)!}f^{(d+1)}(\delta_i N)
		\le \epsilon.
		\label{Y3}
	\end{equation}
	Now, the following inequality holds: 
	\begin{equation}
	\begin{split}
		&\quad \#\{ P\subset[1,N]_{\mathbb{Z}} : P\in\cP_{k,1},\ (\lfloor{f(n)}\rfloor)_{n\in P}\in\cP_{k,d} \}\\
		&\ge \#\{ (n,r)\in[1,N-(k-1)R_t]_{\mathbb{Z}}\times[1,R_t]_{\mathbb{Z}} : (\lfloor{f(n+rj)}\rfloor)_{j=0}^{k-1}\in\cP_{k,d} \}\\
		&\ge \#\{ (n,r)\in[1,N]_{\mathbb{Z}}\times[1,R_t]_{\mathbb{Z}} : (\lfloor{f(n+rj)}\rfloor)_{j=0}^{k-1}\in\cP_{k,d} \}
		- (k-1)R_t^2\\
		&\overset{(a)}{\ge} \sum_{i=1}^t M_i(N) - (k-1)R_t^2,
	\end{split}\label{Y4}
	\end{equation}
	where for $i\in[1,t]_{\mathbb{Z}}$ and $N\in\mathbb{N}$ the value $M_i(N)$ is defined as 
	\[
	M_i(N) =
	\#\{ (n,r)\in(\delta_i N,\delta_{i+1}N]_{\mathbb{Z}}\times[1,R_i]_{\mathbb{Z}} : (\lfloor{f(n+rj)}\rfloor)_{j=0}^{k-1}\in\cP_{k,d} \},
	\]
	and the monotonicity $R_1\le R_2\le\cdots\le R_t$ is used to obtain $(a)$.
	For $n,r\in\mathbb{N}$ and $i\in[0,d]_{\mathbb{Z}}$, 
	define the real number $a_i=a_i(n,r)$ as the right-hand side in \eqref{eq17}.
	Then the vectors $\mathbf{a}(n,r) \coloneqq (a_0(n,r),a_1(n,r),\ldots,a_d(n,r))$, $n,r\in\mathbb{N}$, 
	can be expressed as 
	\[
	\mathbf{a}(n,r) = ( f(n), rf'(n), r^2f''(n)/2!, \ldots, r^df^{(d)}(n)/d! )\mathbf{A},
	\]
	where the integer matrix $\mathbf{A}=(a_{ij})_{0\le i,j\le d}$ is defined as 
	$a_{ij}=S(i,j)j!$ if $i\ge j$, and $a_{ij}=0$ if $i<j$.
	Also, define the convex set $\cC_{k,d+1}^{-}(\epsilon)$ as 
	\[
	\cC_{k,d+1}^{-}(\epsilon) = \biggl\{ (y_i)_{i=0}^d\in\mathbb{R}^{d+1} : 
	0\le y_0<1,\ 0\le\sum_{i=0}^d \binom{j}{i}y_i<1-\epsilon\ (\forall j\in[1,k)_{\mathbb{Z}}) \biggr\}.
	\]
	Due to \eqref{Y3}, the same argument as the proof of Proposition~\ref{main0'} implies that 
	if integers $n\ge\delta_iN$ and $r\in[1,R_i]_{\mathbb{Z}}$ and a vector $(s_j)_{j=1}^d\in\mathbb{Z}^d$ 
	satisfy $(\{a_j(n,r)\}+s_j)_{j=0}^d\in\cC_{k,d+1}^{-}(\epsilon)$, 
	then $(\lfloor{f(n+rj)}\rfloor)_{j=0}^{k-1}$ belongs to $\cP_{k,d}$, 
	where $s_0\coloneqq0$.
	Thus, 
	\begin{align}
		&\quad \frac{\#\{ P\subset[1,N]_{\mathbb{Z}} : P\in\cP_{k,1},\ (\lfloor{f(n)}\rfloor)_{n\in P}\in\cP_{k,d} \}}{Nf^{(d+1)}(N)^{-1/(d+1)}}\nonumber\\
		&\overset{\eqref{Y4}}{\ge} \sum_{i=1}^t
		\frac{M_i(N)}{Nf^{(d+1)}(N)^{-1/(d+1)}} - \frac{(k-1)R_t^2}{Nf^{(d+1)}(N)^{-1/(d+1)}}\nonumber\\
		&\ge \sum_{i=1}^t \sum_{s_1,\ldots,s_d\in\mathbb{Z}}
		\frac{M'_i(\delta_i,\delta_{i+1},N)}{Nf^{(d+1)}(N)^{-1/(d+1)}} - \frac{(k-1)R_t^2}{Nf^{(d+1)}(N)^{-1/(d+1)}}, \label{eq09}
	\end{align}
	where for $i\in[1,t]_{\mathbb{Z}}$, $N\in\mathbb{N}$ and $y>x\ge0$ the value $M'_i(x,y,N)$ is defined as 
	\[
	M'_i(x,y,N) =
	\#\{ (n,r)\in(xN,yN]_{\mathbb{Z}}\times[1,R_i]_{\mathbb{Z}} : (\{a_j(n,r)\}+s_j)_{j=0}^d\in\cC_{k,d+1}^{-}(\epsilon) \}.
	\]
	The absolute value of the second term of \eqref{eq09} is bounded above by 
	\begin{align*}
		&\quad \frac{(k-1)R_t^2}{Nf^{(d+1)}(N)^{-1/(d+1)}}
		\le \frac{k-1}{Nf^{(d+1)}(N)^{-1/(d+1)}}\cdot\frac{(\epsilon(d+1)!)^{2/(d+1)}}{(k-1)^2}f^{(d+1)}(\delta_tN)^{-2/(d+1)}\\
		&\overset{\text{(B1)}}{\le} \frac{f^{(d+1)}(N)^{-2/(d+1)}}{Nf^{(d+1)}(N)^{-1/(d+1)}}\cdot\frac{(\epsilon(d+1)!)^{2/(d+1)}}{k-1}
		\le \frac{1}{Nf^{(d+1)}(N)^{1/(d+1)}}\cdot\frac{(d+1)!}{k-1} \underset{\text{(B2)}}{\xrightarrow{N\to\infty}} 0.
	\end{align*}
	Also, the following inequality holds: 
	\begin{align*}
		&\quad \frac{M'_i(\delta_i,\delta_{i+1},N)}{Nf^{(d+1)}(N)^{-1/(d+1)}}
		\overset{\text{(B3)}}{\ge} \frac{M'_i(\delta_i,\delta_{i+1},N)}{c(\delta_i)^{1/(d+1)}Nf^{(d+1)}(\delta_i N)^{-1/(d+1)}}\\
		&= \frac{M'_i(0,\delta_{i+1},N) - M'_i(0,\delta_i,N)}{c(\delta_i)^{1/(d+1)}Nf^{(d+1)}(\delta_i N)^{-1/(d+1)}}\\
		&= \frac{M'_i(0,\delta_{i+1},N)}{\delta_{i+1}NR_i}
		\cdot\frac{c(\delta_i)^{-1/(d+1)}\delta_{i+1}R_i}{f^{(d+1)}(\delta_i N)^{-1/(d+1)}}
		- \frac{M'_i(0,\delta_i,N)}{\delta_i NR_i}
		\cdot\frac{c(\delta_i)^{-1/(d+1)}\delta_i R_i}{f^{(d+1)}(\delta_i N)^{-1/(d+1)}}.
	\end{align*}
	Once taking the limit $N\to\infty$ in the above inequality, 
	Lemma~\ref{lem:Weyl} implies that 
	\begin{align*}
		&\quad \liminf_{N\to\infty} \frac{M'_i(\delta_i,\delta_{i+1},N)}{Nf^{(d+1)}(N)^{-1/(d+1)}}\\
		&\ge \mu\Bigl( \cC_{k,d+1}^{-}(\epsilon)\cap\prod_{j=0}^d [s_j,s_j+1) \Bigr)
		\cdot \frac{(\epsilon(d+1)!)^{1/(d+1)}c(\delta_i)^{-1/(d+1)}}{k-1}(\delta_{i+1}-\delta_i).
	\end{align*}
	Therefore, letting $N\to\infty$ in \eqref{eq09}, we obtain 
	\begin{align*}
		&\quad \liminf_{N\to\infty} \frac{\#\{ P\subset[1,N]_{\mathbb{Z}} : P\in\cP_{k,1},\ (\lfloor{f(n)}\rfloor)_{n\in P}\in\cP_{k,d} \}}{Nf^{(d+1)}(N)^{-1/(d+1)}}\\
		&\ge \sum_{i=1}^t \sum_{s_1,\ldots,s_d\in\mathbb{Z}}
		\mu\Bigl( \cC_{k,d+1}^{-}(\epsilon)\cap\prod_{j=0}^d [s_j,s_j+1) \Bigr)
		\cdot \frac{(\epsilon(d+1)!)^{1/(d+1)}c(\delta_i)^{-1/(d+1)}}{k-1}(\delta_{i+1}-\delta_i)\\
		&= \mu(\cC_{k,d+1}^{-}(\epsilon))(\epsilon(d+1)!)^{1/(d+1)}\sum_{i=1}^t \frac{c(\delta_i)^{-1/(d+1)}}{k-1}(\delta_{i+1}-\delta_i)
		> 0,
	\end{align*}
	where the last inequality is derived from 
	$\mu(\cC_{k,d+1}^{-}(\epsilon))\ge(1-\epsilon)^{d+1}/\prod_{i=1}^d \binom{k-1}{i} > 0$ 
	(see Lemma~\ref{conv-set-lb}).
\end{proof}

\begin{remark}\label{rem:lb}
	Let us consider the special case $f(x)=x^\alpha$ with $\alpha\in(d,d+1)$.
	Then we can take $c(\delta)$ in (B3) as $c(\delta)=\delta^{\alpha-d-1}$.
	Thus, 
	\begin{align*}
		&\quad \liminf_{N\to\infty}
		\frac{\#\{ P\subset[1,N]_{\mathbb{Z}} : P\in\cP_{k,1},\ (\lfloor{n^\alpha}\rfloor)_{n\in P}\in\cP_{k,d} \}}{((\alpha)_{d+1})^{-1/(d+1)}N^{2-\alpha/(d+1)}}\\
		&\ge \mu(\cC_{k,d+1}^{-}(\epsilon))(\epsilon(d+1)!)^{1/(d+1)}
		\sum_{i=1}^t \delta_i^{1-\alpha/(d+1)}(\delta_{i+1}-\delta_i).
	\end{align*}
	Since $\epsilon\in(0,1)$ and $0<\delta_1<\cdots<\delta_t<\delta_{t+1}=1$ are arbitrary, 
	we obtain 
	\begin{align*}
		&\quad \liminf_{N\to\infty} \frac{\#\{ P\subset[1,N]_{\mathbb{Z}} : P\in\cP_{k,1},\ (\lfloor{n^\alpha}\rfloor)_{n\in P}\in\cP_{k,d} \}}{N^{2-\alpha/(d+1)}}\\
		&\ge C_{k,d}\Bigl( \frac{(d+1)!}{(\alpha)_{d+1}} \Bigr)^{1/(d+1)} \int_0^1 x^{1-\alpha/(d+1)}\,dx\\
		&= C_{k,d}\Bigl( \frac{(d+1)!}{(\alpha)_{d+1}} \Bigr)^{1/(d+1)}\frac{1}{2-\alpha/(d+1)} \eqqcolon \tilde{A}_{\alpha,k},
	\end{align*}
	where 
	\[
	C_{k,d} = \sup_{0<x<1} \mu(\cC_{k,d+1}^{-}(x))x^{1/(d+1)}
	\ge \frac{\sup_{0<x<1} (1-x)^{d+1} x^{1/(d+1)}}{\prod_{i=1}^d \binom{k-1}{i}}
	\]
	because the inequality $\mu(\cC_{k,d+1}^{-}(x)) \ge (1-x)^{d+1}/\prod_{i=1}^d \binom{k-1}{i}$
	is derived from Lemma~\ref{conv-set-lb}.
	Therefore, the constant $A_{\alpha,k}$ in Theorem~\ref{main01} 
	is an arbitrary value in the interval $(0,\tilde{A}_{\alpha,k})$.
\end{remark}

\begin{proof}[Proof of Proposition~$\ref{main1'}$ (limsup)]
	Without loss of generality, 
	we may assume $n_0=1$.
	Fix integers $k\ge d+2$ and $d\ge1$, 
	and take an arbitrary $\beta>1$.
	Due to (B2), we can take an integer $N_0\ge x_0(1/\beta)$ such that 
	every $x\ge N_0$ satisfies that $f^{(d+1)}(x)>0$ and $1+(k-1)R(x)/x<\beta$, 
	where 
	\begin{equation*}
		R(x) \coloneqq \Bigl( \frac{2^d c(1/\beta)}{k-d-1} \Bigr)^{1/(d+1)}f^{(d+1)}(x)^{-1/(d+1)}.
	\end{equation*}
	First, we show that 
	if integers $n\ge N_0$ and $r\ge1$ satisfy $(\lfloor{f(n+rj)}\rfloor)_{j=0}^{k-1}\in\cP_{k,d}$, 
	then $r < R(n)$ by contradiction.
	Suppose that integers $m_0\ge N_0$ and $r_0\ge1$ satisfied that 
	$(\lfloor{f(m_0+r_0j)}\rfloor)_{j=0}^{k-1}\in\cP_{k,d}$ and $r_0 \ge R_0 \coloneqq R(m_0)$.
	The derivative of the function 
	\begin{align*}
		g(x) &\coloneqq \Delta_x^d f(m_0+(k-d-1)x) - \Delta_x^d f(m_0)\\
		&= \sum_{i=0}^d \binom{d}{i}(-1)^i f(m_0+(k-1-i)x) - \sum_{i=0}^d \binom{d}{i}(-1)^i f(m_0+(d-i)x)
	\end{align*}
	is equal to 
	\begin{align*}
		g'(x) &= \sum_{i=0}^d \binom{d}{i}(-1)^i (k-1-i)f'(m_0+(k-1-i)x)\\
		&\quad- \sum_{i=0}^d \binom{d}{i}(-1)^i (d-i)f'(m_0+(d-i)x)\\
		&= (k-1)\Delta_x^d f'(m_0+(k-d-1)x) + \sum_{i=0}^d \binom{d}{i}(-1)^{i+1} if'(m_0+(k-1-i)x)\\
		&\quad- d\Delta_x^d f'(m_0) - \sum_{i=0}^d \binom{d}{i}(-1)^{i+1} if'(m_0+(d-i)x).
	\end{align*}
	Using the equality $\binom{d}{i}i=d\binom{d-1}{i-1}$, 
	we have 
	\begin{align*}
		g'(x) &= (k-1)\Delta_x^d f'(m_0+(k-d-1)x) + d\sum_{i=1}^d \binom{d-1}{i-1}(-1)^{i+1} f'(m_0+(k-1-i)x)\\
		&\quad- d\Delta_x^d f'(m_0) - d\sum_{i=1}^d \binom{d-1}{i-1}(-1)^{i+1} f'(m_0+(d-i)x)\\
		&= (k-1)\Delta_x^d f'(m_0+(k-d-1)x) + d\sum_{i=0}^{d-1} \binom{d-1}{i}(-1)^i f'(m_0+(k-2-i)x)\\
		&\quad- d\Delta_x^d f'(m_0) - d\sum_{i=0}^{d-1} \binom{d-1}{i}(-1)^i f'(m_0+(d-1-i)x)\\
		&= (k-1)\Delta_x^d f'(m_0+(k-d-1)x) + d\Delta_x^{d-1} f'(m_0+(k-d-1)x)\\
		&\quad- d\Delta_x^d f'(m_0) - d\Delta_x^{d-1} f'(m_0)\\
		&= (k-1)\Delta_x^d f'(m_0+(k-d-1)x) + d\Delta_x^{d-1} f'(m_0+(k-d-1)x)\\
		&\quad- d\Delta_x^{d-1} f'(m_0+x).
	\end{align*}
	The mean value theorem implies that for all $x>0$ 
	there exist $\theta_1,\ldots,\theta_d\in(0,x)$, $\theta'_1\in[0,(k-d-2)x]$ and $\theta'_2\ldots,\theta'_d\in(0,x)$ 
	such that 
	\begin{align*}
		g'(x)
		&= (k-1)x\Delta_x^{d-1} f''(m_0+(k-d-1)x+\theta_1)\\
		&\quad+ d(k-d-2)x\Delta_x^{d-1} f''(m_0+x+\theta'_1)\\
		&= \cdots\\
		&= (k-1)x^d f^{(d+1)}(m_0+(k-d-1)x+\theta)\\
		&\quad+ d(k-d-2)x^d f^{(d+1)}(m_0+x+\theta')\\
		&\overset{(a)}{>} 0,
	\end{align*}
	where $\theta=\theta_1+\cdots+\theta_d$ and $\theta'=\theta'_1+\cdots+\theta'_d$; 
	the inequality $(a)$ follows from the fact that $f^{(d+1)}(y)>0$ for all $y\ge N_0$.
	Thus, $g'(x)$ is positive, 
	and $g(x)$ increases.
	Recalling that $(\lfloor{f(m_0+r_0j)}\rfloor)_{j=0}^{k-1}$ belongs to $\cP_{k,d}$, 
	we have 
	\begin{equation}
	\begin{split}
		2^d &= \Delta_{r_0}^d \lfloor{f(m_0+(k-d-1)r_0)}\rfloor - \Delta_{r_0}^d \lfloor{f(m_0)}\rfloor + 2^d\\
		&> \Delta_{r_0}^d f(m_0+(k-d-1)r_0) - \Delta_{r_0}^d f(m_0)\\
		&\overset{(b)}{\ge} \Delta_{R_0}^d f(m_0+(k-d-1)R_0) - \Delta_{R_0}^d f(m_0)\\
		&\overset{(c)}{=} (k-d-1)R_0\Delta_{R_0}^d f'(m_0+\theta_0)\\
		&\overset{(c)}{=} \cdots \overset{(c)}{=} (k-d-1)R_0^{d+1}f^{(d+1)}(m_0+\theta)\\
		&= 2^d c(1/\beta)\frac{f^{(d+1)}(m_0+\theta)}{f^{(d+1)}(m_0)},
	\end{split}\label{eq10}
	\end{equation}
	where the monotonicity of $g$ and the inequality $r_0\ge R_0$ have been used to obtain $(b)$; 
	the mean value theorem has been used to obtain $(c)$; 
	$\theta_0\in(0,(k-d-1)R_0)$, $\theta_1,\ldots,\theta_d\in(0,R_0)$, 
	and $\theta=\theta_0+\theta_1+\cdots+\theta_d$.
	Put $\beta_0=1+(k-1)R_0/m_0$.
	Since the inequality $\beta_0<\beta$ holds due to $m_0\ge N_0$, 
	it follows that 
	\begin{equation}
	\begin{split}
		f^{(d+1)}(m_0+\theta) &\overset{\text{(B1)}}{\ge} f^{(d+1)}(m_0+(k-1)R_0) = f^{(d+1)}(\beta_0m_0)\\
		&\ge f^{(d+1)}(\beta m_0) \overset{\text{(B3)}}{\ge} c(1/\beta)^{-1}f^{(d+1)}(m_0).
	\end{split}\label{eq11}
	\end{equation}
	Thus, \eqref{eq10} and \eqref{eq11} yield that 
	\begin{equation*}
		2^d > 2^d c(1/\beta)\frac{f^{(d+1)}(m_0+\theta)}{f^{(d+1)}(m_0)}
		\ge 2^d c(1/\beta)\frac{c(1/\beta)^{-1}f^{(d+1)}(m_0)}{f^{(d+1)}(m_0)} = 2^d,
	\end{equation*}
	which is a contradiction.
	Therefore, if $(\lfloor{f(n+rj)}\rfloor)_{j=0}^{k-1}\in\cP_{k,d}$, $n\ge N_0$ and $r\ge1$, 
	then $r < R(n)$.
	\par
	Next, we show Proposition~\ref{main1'} (limsup).
	Let $N\in\mathbb{N}$ be sufficiently large.
	Since the inequality 
	\begin{align*}
		&\quad \#\{ P\subset[1,N]_{\mathbb{Z}} : P\in\cP_{k,1},\ (\lfloor{f(n)}\rfloor)_{n\in P}\in\cP_{k,d} \}\\
		&\le \#\{ (n,r)\in[1,N]_{\mathbb{Z}}\times[1,N]_{\mathbb{Z}} : (\lfloor{f(n+rj)}\rfloor)_{j=0}^{k-1}\in\cP_{k,d} \}\\
		&\le \#\{ (n,r)\in[1,N_0]_{\mathbb{Z}}\times[1,N]_{\mathbb{Z}} \} + \#\{ (n,r)\in[N_0,N]_{\mathbb{Z}}\times[1,N]_{\mathbb{Z}} : r<R(n) \}\\
		&\le N_0N + \sum_{n=N_0}^N R(n)
	\end{align*}
	holds, it follows that 
	\begin{align*}
		&\quad \limsup_{N\to\infty}
		\frac{\#\{ P\subset[1,N]_{\mathbb{Z}} : P\in\cP_{k,1},\ (\lfloor{f(n)}\rfloor)_{n\in P}\in\cP_{k,d} \}}{Nf^{(d+1)}(N)^{-1/(d+1)}}\\
		&\le \limsup_{N\to\infty} \frac{f^{(d+1)}(N)^{1/(d+1)}}{N}\sum_{n=N_0}^N R(n)
		\overset{\text{(B1)}}{\le} \limsup_{N\to\infty} f^{(d+1)}(N)^{1/(d+1)}R(N)\\
		&= \Bigl( \frac{2^d c(1/\beta)}{k-d-1} \Bigr)^{1/(d+1)} < \infty.
	\end{align*}
\end{proof}

\begin{remark}\label{rem:ub}
	Let us consider the special case $f(x)=x^\alpha$ with $\alpha\in(d,d+1)$.
	Then we can take $c(\delta)$ in (B3) as $c(\delta)=\delta^{\alpha-d-1}$.
	Thus, 
	\begin{align*}
		&\quad \limsup_{N\to\infty}
		\frac{\#\{ P\subset[1,N]_{\mathbb{Z}} : P\in\cP_{k,1},\ (\lfloor{n^\alpha}\rfloor)_{n\in P}\in\cP_{k,d} \}}{((\alpha)_{d+1})^{-1/(d+1)}N^{2-\alpha/(d+1)}}\\
		&\le \limsup_{N\to\infty} \frac{f^{(d+1)}(N)^{1/(d+1)}}{N}\sum_{n=1}^N \Bigl( \frac{2^d c(1/\beta)}{k-d-1} \Bigr)^{1/(d+1)}f^{(d+1)}(n)^{-1/(d+1)}\\
		&= \limsup_{N\to\infty} N^{\alpha/(d+1)-2}\sum_{n=1}^N \Bigl( \frac{2^d \beta^{d+1-\alpha}}{k-d-1} \Bigr)^{1/(d+1)}n^{1-\alpha/(d+1)}\\
		&= \Bigl( \frac{2^d \beta^{d+1-\alpha}}{k-d-1} \Bigr)^{1/(d+1)}\frac{1}{2-\alpha/(d+1)}.
	\end{align*}
	The arbitrariness of $\beta>1$ yields 
	\begin{align*}
		&\quad \limsup_{N\to\infty}
		\frac{\#\{ P\subset[1,N]_{\mathbb{Z}} : P\in\cP_{k,1},\ (\lfloor{n^\alpha}\rfloor)_{n\in P}\in\cP_{k,d} \}}{N^{2-\alpha/(d+1)}}\\
		&\le \Bigl( \frac{2^d}{(\alpha)_{d+1}(k-d-1)} \Bigr)^{1/(d+1)}\frac{1}{2-\alpha/(d+1)}
		\eqqcolon \tilde{B}_{\alpha,k}.
	\end{align*}
	Therefore, the constant $B_{\alpha,k}$ in Theorem~\ref{main01} 
	is an arbitrary value in the interval $(\tilde{B}_{\alpha,k},\infty)$.
\end{remark}

\section{Further analysis: discrepancy and short intervals}\label{discrepancy}

In this section, we show Theorems~\ref{main2} and \ref{main3}.
These theorems are derived from the following proposition.

\begin{proposition}\label{mainprop}
	Let $\alpha\in(1,2)$ and $c>0$, 
	and let $k\ge3$ and $r\ge1$ be integers.
	Then, there exists $N_0=N_0(\alpha,k,r)\in\mathbb{N}$ such that 
	for all $N\in[N_0,\infty)_{\mathbb{Z}}$ and $L\in[1,cN]_{\mathbb{Z}}$, 
	\begin{equation}
	\begin{split}
		&\quad \frac{1}{L}\#\{ n\in[N,N+L)_{\mathbb{Z}} : (\lfloor{(n+rj)^\alpha}\rfloor)_{j=0}^{k-1}\in\cP_{k,1} \}
		- \frac{1}{k-1}\\
		&\ll_{\alpha,k,r,c}
		\begin{cases}
			N^{(\alpha-2)/6}(\log N)^{1/2} + N^{(2-\alpha)/2}/L^{1/2} & \alpha\in(1,2),\\
			N^{(\alpha-3)/14}(\log N)^{1/2} + N^{(2-\alpha)/2}/L^{1/2} & \alpha\in(1,3/2),\\
			(N^{(\alpha-3)/14} + N^{(3-\alpha)/6}/L^{1/2})(\log N)^{1/2} & \alpha\in[3/2,11/6).
		\end{cases}\label{shortintformula}
	\end{split}
	\end{equation}
\end{proposition}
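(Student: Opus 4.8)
\textit{Proof strategy for Proposition~\ref{mainprop}.}
The plan is to run the argument of the proof of Proposition~\ref{main0'} in the case $d=1$, $f(x)=x^\alpha$, but over the short window $[N,N+L)$ while keeping every error term explicit, and to quantify the convergence by exponential-sum estimates. Writing $a_0(n)=n^\alpha$ and $a_1(n)=r\alpha n^{\alpha-1}$ as in \eqref{eq17}, Taylor's theorem gives $f(n+rj)=a_0(n)+a_1(n)j+\tfrac{(rj)^2}{2}f''(n+\theta_{n,j})$ with $\theta_{n,j}\in(0,rj)$; since $L\le cN$ and $f''(x)\asymp_\alpha x^{\alpha-2}$, there is $\epsilon=\epsilon(N)\asymp_{\alpha,k,r,c}N^{\alpha-2}$ with $\tfrac{(r(k-1))^2}{2}\abs{f''(n+\theta_{n,j})}\le\epsilon<\tfrac12$ for all $n\in[N,N+L)_{\mathbb Z}$, $j\in[0,k)_{\mathbb Z}$, once $N\ge N_0(\alpha,k,r)$. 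Exactly the inclusions used in the proof of Proposition~\ref{main0'} then yield the sandwich
\begin{equation*}
\sum_{n=N}^{N+L-1}\mathbf 1_{\cE^-(\epsilon)}\bigl(\{a_0(n)\},\{a_1(n)\}\bigr)
\le\#\{n\in[N,N+L)_{\mathbb Z}:(\lfloor(n+rj)^\alpha\rfloor)_{j=0}^{k-1}\in\cP_{k,1}\}
\le\sum_{n=N}^{N+L-1}\mathbf 1_{\cE^+(\epsilon)}\bigl(\{a_0(n)\},\{a_1(n)\}\bigr),
\end{equation*}
where $\cE^{\pm}(\epsilon)\subset[0,1)^2$ is the image of the convex polygon $\cC_{k,2}^{\pm}(\epsilon)$ (see \eqref{eqC-} and \eqref{eqC+}) under the quotient $\mathbb R^2\to\mathbb R^2/\mathbb Z^2$, and $\mu(\cE^{\pm}(\epsilon))=\tfrac1{k-1}+O_k(\epsilon)$ by Lemma~\ref{conv-set-lb} and a direct slicing computation. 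Thus the whole problem reduces to estimating the discrepancy of the plane sequence $\bigl((\{n^\alpha\},\{r\alpha n^{\alpha-1}\})\bigr)_{n=N}^{N+L-1}$ relative to the fixed finite family of polygons $\cE^{\pm}(\epsilon)$.

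I would handle this by majorising/minorising $\mathbf 1_{\cE^\pm(\epsilon)}$ on the torus by trigonometric polynomials $T^\pm$ of degree $\le H$ (a parameter to be chosen), with $\widehat{T^\pm}(\mathbf 0)=\mu(\cE^\pm(\epsilon))+O_k(1/H)$ and Fourier coefficients inheriting the decay of those of $\mathbf 1_{\cE^\pm(\epsilon)}$, so that
\begin{equation*}
\Bigl|\tfrac1L\#\{n\in[N,N+L)_{\mathbb Z}:(\ldots)\in\cP_{k,1}\}-\tfrac1{k-1}\Bigr|
\ll_{\alpha,k,r,c} N^{\alpha-2}+\tfrac1H
+\tfrac1L\sum_{0<|\mathbf h|\le H}\bigl|\widehat{T^\pm}(\mathbf h)\bigr|\,\Bigl|\sum_{n=N}^{N+L-1}e\bigl(h_0 n^\alpha+h_1 r\alpha n^{\alpha-1}\bigr)\Bigr|,
\end{equation*}
and the $N^{\alpha-2}$ term is absorbed into each case of \eqref{shortintformula}. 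The structural point I would exploit for the modes with $h_0=0$: because every edge of $\cC_{k,2}^{\pm}(\epsilon)$ has an \emph{integral, non-horizontal} normal (from $\{(1,j):0\le j\le k-1\}$ and negatives), the $y_1$-slice length of the periodised region $\cE^\pm(\epsilon)$ is Lipschitz (continuous piecewise linear) on $\mathbb R/\mathbb Z$, whence $|\widehat{\mathbf 1_{\cE^\pm(\epsilon)}}(0,h_1)|\ll_k|h_1|^{-2}$. Combined with the elementary first-derivative (Kusmin--Landau) bound $\bigl|\sum_{n=N}^{N+L-1}e(h_1 r\alpha n^{\alpha-1})\bigr|\ll_{\alpha,r}\min\bigl(L,\ N^{2-\alpha}/|h_1|\bigr)$, summation over $h_1$ gives an $h_0=0$ contribution that is $\ll_{\alpha,k,r}\min(L,N^{2-\alpha})$, i.e.\ $\ll_{\alpha,k,r}L\cdot N^{(2-\alpha)/2}/L^{1/2}$ after dividing by $L$; this is precisely the $N^{(2-\alpha)/2}/L^{1/2}$ summand (and a second-derivative van der Corput bound on the same sum produces the $N^{(3-\alpha)/6}/L^{1/2}$ variant in the range $\alpha\in[3/2,11/6)$).

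For the modes with $h_0\ne0$ I would take $H$ a small power of $N$ (so $H\ll N^{2-\alpha}$), note that the phase $h_0x^\alpha+h_1 r\alpha x^{\alpha-1}$ has $j$-th derivative $\asymp|h_0|N^{\alpha-j}$ uniformly in $h_1$, and estimate $\bigl|\sum_{n=N}^{N+L-1}e(\cdot)\bigr|$ by van der Corput's method. The bookkeeping then amounts to the standard Erdős--Turán(--Koksma) plus isotropic-discrepancy inequality $J_L\ll (D_L^\ast)^{1/2}$ in dimension two: the second-derivative test, which contributes $L|h_0|^{1/2}N^{(\alpha-2)/2}+|h_0|^{-1/2}N^{(2-\alpha)/2}$, yields after summation and optimisation at $H\asymp N^{(2-\alpha)/3}$ the bound $D_L^\ast\ll_{\alpha,k,r}N^{(\alpha-2)/3}\log N$, hence the case-$(1,2)$ error $N^{(\alpha-2)/6}(\log N)^{1/2}$; replacing the second-derivative test by the stronger van der Corput ($A$-process) bound, i.e.\ the exponent pair $(\tfrac16,\tfrac23)$, improves this exponent to $(\alpha-3)/14$, and the admissibility of that process for the relevant phase is exactly what restricts the sharper case to $\alpha<11/6$. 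Re-optimising $H$ in each regime and collecting terms gives the three displayed bounds of \eqref{shortintformula}; combining the upper and lower sandwich estimates and using $\mu(\cC_{k,2}^\pm(\epsilon))=\tfrac1{k-1}+O_k(\epsilon)$ finishes the proof. (Theorems~\ref{main2} and \ref{main3} then follow by specialising $L\asymp N$ and $L\asymp N^{2-\alpha}$ respectively.)

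The main obstacle is the final step: obtaining the refined exponential-sum bounds \emph{over the short interval} $[N,N+L)$ rather than over a dyadic block, and in particular isolating the three $\alpha$-ranges and placing the auxiliary parameter $H$ correctly in each. A secondary, but essential, technical point is the construction of one-sided trigonometric approximants to the periodised polygons $\cE^\pm(\epsilon)$ whose Fourier coefficients exhibit the required anisotropic decay — notably the $|h_1|^{-2}$ decay along the line $h_0=0$ — since it is this decay that lets the crude first-derivative bound suffice and produces the clean $N^{(2-\alpha)/2}/L^{1/2}$ term valid down to $L=1$.
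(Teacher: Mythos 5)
Your proposal is correct and follows essentially the same route as the paper: sandwich the count between the periodised polygons $\cC_{k,2}^{\mp}(\epsilon)$ with $\epsilon\asymp_{\alpha,k,r}N^{\alpha-2}$, bound the box discrepancy of $\bigl((n^\alpha,r\alpha n^{\alpha-1})\bigr)_{n=N}^{N+L-1}$ via Erd\H{o}s--Tur\'an--Koksma together with the Kusmin--Landau, van der Corput second-derivative, and a third-derivative (Sargos--Gritsenko) test, and convert to isotropic discrepancy by the square-root inequality \eqref{isotropic}. The one-sided trigonometric majorants you single out as an essential obstacle are not needed: the $h_0=0$ modes contribute $N^{2-\alpha}/L$ to the box discrepancy via Kusmin--Landau within the Erd\H{o}s--Tur\'an--Koksma sum, and the square root in \eqref{isotropic} already produces the $N^{(2-\alpha)/2}/L^{1/2}$ term.
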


\begin{remark}
	The first one of \eqref{shortintformula} is the best of the three cases 
	when $\alpha\in(1,5/4]\cup[11/6,2)$; 
	the second one of \eqref{shortintformula} is the best of the three cases when $\alpha\in(5/4,3/2)$.
	However, it depends on the growth rate of $L$ 
	whether the third one of \eqref{shortintformula} is the best of the three cases when $\alpha\in[3/2,11/6)$.
	For instance, if $L=N$ and $\alpha\in[3/2,11/6)$, 
	then the third one of \eqref{shortintformula} is the best of the three cases; 
	but if $\epsilon\in(0,(2-\alpha)/3)$, $L=N^{2-\alpha+\epsilon}$ and $\alpha\in[3/2,11/6)$, 
	then the first one of \eqref{shortintformula} is the best of the three cases.
\end{remark}

Proposition~\ref{mainprop} is an asymptotic formula for the number of integers $n\ge1$ in a short interval 
such that $(\lfloor{(n+rj)^\alpha}\rfloor)_{j=0}^{k-1}$ is an AP.
We prove Proposition~\ref{mainprop} at the end of this section.
Note that \eqref{shortintformula} is meaningless when $L=L(N)$ is sufficiently smaller than $N$.
This is because in the case, the right-hand side in \eqref{shortintformula} diverges to positive infinity as $N\to\infty$.
Before proving Proposition~\ref{mainprop}, 
let us show Theorems~\ref{main2} and \ref{main3} by using Proposition~\ref{mainprop}.

\begin{proof}[Proof of Theorem~$\ref{main2}$ assuming Proposition~$\ref{mainprop}$]
	Let $\alpha\in(1,2)$, and let $k\ge3$ and $r\ge1$ be integers.
	Also, define the set $\cQ$ as 
	\begin{equation}
		\cQ = \{ n\in\mathbb{N} : (\lfloor{(n+rj)^\alpha}\rfloor)_{j=0}^{k-1}\in\cP_{k,1} \}.
		\label{eq16}
	\end{equation}
	Then Proposition~\ref{mainprop} implies that 
	\begin{equation*}
		\frac{\#(\cQ\cap[x,2x))}{x} = \frac{1}{k-1} + O_{\alpha,k,r}(F_0(x)),
	\end{equation*}
	where 
	\[
	F_0(x) \coloneqq
	\begin{cases}
		x^{(\alpha-2)/6}(\log x)^{1/2} + x^{(1-\alpha)/2} & \alpha\in(1,5/4)\cup[11/6,2),\\
		x^{(\alpha-3)/14}(\log x)^{1/2} + x^{(1-\alpha)/2} & \alpha\in[5/4,3/2),\\
		(x^{(\alpha-3)/14} + x^{-\alpha/6})(\log x)^{1/2} & \alpha\in[3/2,11/6).
	\end{cases}
	\]
	Noting the ranges of $\alpha$, we have $F_0(x)\ll F(x)$, 
	where $F$ is defined in Theorem~\ref{main2}.
	Let $N\in\mathbb{N}$ be sufficiently large and 
	take $M\in\mathbb{N}$ with $2^M\le N<2^{M+1}$.
	Then 
	\begin{gather*}
		0 \le \frac{\#(\cQ\cap[1,N])}{N} - \frac{1}{N}\sum_{m=1}^M \#(\cQ\cap[2^{-m}N, 2^{1-m}N)) \le 2/N,\\
		\begin{split}
			&\quad \frac{1}{N}\sum_{m=1}^M \#(\cQ\cap[2^{-m}N, 2^{1-m}N))
			= \sum_{m=1}^M \frac{2^{-m}}{k-1} + O_{\alpha,k,r}\Bigl( \sum_{m=1}^M 2^{-m}F(2^{-m}N) \Bigr)\\
			&= \frac{1-2^{-M}}{k-1} + O_{\alpha,k,r}(F(N))
			= \frac{1}{k-1} + O_{\alpha,k,r}(1/N + F(N)).
		\end{split}
	\end{gather*}
	Therefore, Theorem~\ref{main2} holds.
\end{proof}

\begin{proof}[Proof of Theorem~$\ref{main3}$ assuming Proposition~$\ref{mainprop}$]
	Let $\alpha\in(1,2)$, and let $k\ge3$ and $r\ge1$ be integers.
	Define the set $\cQ$ as \eqref{eq16}.
	Thanks to the first inequality of \eqref{shortintformula}, 
	there exist constants $C=C(\alpha,k,r)>0$ and $N_0=N_0(\alpha,k,r)\in\mathbb{N}$ such that 
	for all $N\in[N_0,\infty)_{\mathbb{Z}}$ and $L\in[1,N]_{\mathbb{Z}}$, 
	\begin{equation}
		\abs{\frac{\#(\cQ\cap[N,N+L))}{L} - \frac{1}{k-1}} \le CE_0(N,L),
		\label{eq18}
	\end{equation}
	where $E_0(N,L) \coloneqq N^{(\alpha-2)/6}(\log N)^{1/2} + N^{(2-\alpha)/2}/L^{1/2}$.
	Without loss of generality, we may assume that 
	$N^{(\alpha-2)/6}(\log N)^{1/2} < 1/2C(k-1)$ for every integer $N\ge N_0$.
	Putting $L=L(N)=\lceil{4C^2(k-1)^2N^{2-\alpha}}\rceil$, 
	we have 
	\[
	E_0(N,L) < \frac{1}{2C(k-1)} + \frac{1}{2C(k-1)}
	= \frac{1}{C(k-1)}
	\]
	for every integer $N\ge N_0$.
	Therefore, for every integer $N\ge N_0$, 
	the left-hand side in \eqref{eq18} is less than $1/(k-1)$, 
	whence $\#(\cQ\cap[N,N+L))>0$.
	Finally, the length $L'=L'(N) \coloneqq \max\{N_0+L(N_0),L\}=O_{\alpha,k,r}(N^{2-\alpha})$ satisfies that 
	$\#(\cQ\cap[N,N+L'))>0$ for all $N\in\mathbb{N}$.
\end{proof}

To prove Proposition~\ref{mainprop}, 
we need to estimate the convergence speed of \eqref{eq13} for a uniformly distributed sequence.
For this purpose, let us define two kinds of discrepancies.
For a sequence $(\mathbf{x}_n)_{n=1}^N$ of $\mathbb{R}^d$, 
define the \textit{discrepancy} $D_N$ and \textit{isotropic discrepancy} $J_N$ as 
\begin{align*}
	D_N &= D(\mathbf{x}_1,\ldots,\mathbf{x}_N)\\
	&= \sup_{\substack{0\le a_i<b_i\le 1 \\ i\in[1,d]_{\mathbb{Z}}}}
	\abs{\frac{\#\bigl\{ n\in[1,N]_{\mathbb{Z}} : \{\mathbf{x}_n\}\in\prod_{i=1}^d [a_i,b_i) \bigr\}}{N} - \prod_{i=1}^d (b_i-a_i)},\\
	J_N &= J(\mathbf{x}_1,\ldots,\mathbf{x}_N)
	= \sup_{\substack{\cC\subset[0,1)^d\\ \text{convex}}} \abs{\frac{\#\bigl\{ n\in[1,N]_{\mathbb{Z}} : \{\mathbf{x}_n\}\in\cC \bigr\}}{N}- \mu(\cC)},
\end{align*}
where $\mu$ denotes the Lebesgue measure on $\mathbb{R}^d$.
Although the inequality $D_N\le J_N$ is trivial, 
the following reverse inequality holds \cite[Theorem~1.6, Chapter~2]{KN}: 
\begin{equation}\label{isotropic}
	J_N \leq (4d\sqrt{d}+1)D_N^{1/d}
\end{equation}
for every $d,N\in\mathbb{N}$ and $\mathbf{x}_1,\ldots,\mathbf{x}_N\in\mathbb{R}^d$.
Thanks to \eqref{isotropic}, 
it suffices to give an upper bound for the discrepancy 
in order to estimate the convergence speed of \eqref{eq13}.

Now, the following inequality is useful to evaluate discrepancies.

\begin{lemma}[Koksma-Sz\"{u}sz \cite{Koksma,Szusz}]\label{ETK}
	For all $d,L,H\in\mathbb{N}$ and $\mathbf{x}_1,\ldots, \mathbf{x}_L \in \mathbb{R}^d$, 
	\[
	D(\mathbf{x}_1,\ldots,\mathbf{x}_L)
	\ll_d \frac{1}{H} + \sum_{\substack{0<\|\mathbf{h}\|_\infty \le H \\ \mathbf{h}\in\mathbb{Z}^d}}
	\frac{1}{u(\mathbf{h})}\abs{\frac{1}{L}\sum_{n=1}^L e(\langle{\mathbf{h},\mathbf{x}_n}\rangle)}, 
	\]
	where $u(\mathbf{h}) \coloneqq \prod_{i=1}^d \max\{1,|h_i|\}$.
\end{lemma}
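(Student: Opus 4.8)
The plan is to prove the inequality by the classical Fourier-analytic route that underlies all Erd\H{o}s--Tur\'an--Koksma type estimates: one squeezes the indicator function of an arbitrary box in $[0,1)^d$ between trigonometric polynomials of degree at most $H$ in each variable, and then expands those polynomials into their (finite) Fourier series, so that the exponential sums $\frac1L\sum_{n=1}^L e(\langle\mathbf{h},\mathbf{x}_n\rangle)$ appear explicitly, the leftover being controlled by the quality of the approximation. The one-dimensional input I would use is the Beurling--Selberg--Vaaler construction: for every $[a,b)\subseteq[0,1)$ and every $H\in\mathbb{N}$ there exist real trigonometric polynomials $A^{-},A^{+}$ of degree at most $H$ with $A^{-}(x)\le\mathbf{1}_{[a,b)}(\{x\})\le A^{+}(x)$ for all $x\in\mathbb{R}$, with $A^{+}\ge0$, satisfying
\[
\int_0^1\bigl(A^{+}-A^{-}\bigr)\,dx\ll\frac1{H+1},\qquad
\bigl|\widehat{A^{\pm}}(0)-(b-a)\bigr|\ll\frac1{H+1},\qquad
\bigl|\widehat{A^{\pm}}(h)\bigr|\ll\frac1{\max\{1,|h|\}}
\]
for every $h\in\mathbb{Z}$ (the coefficients vanishing once $|h|>H$). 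Only the qualitative shape of these bounds is used below; one could equally well invoke the cruder Fej\'er-kernel majorant and minorant from the original papers of Koksma and Sz\"usz.

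Given a box $B=\prod_{i=1}^d[a_i,b_i)$ with attached one-dimensional polynomials $A_i^{\pm}$, I would take the product majorant $M(\mathbf{x})=\prod_{i=1}^d A_i^{+}(x_i)$, which satisfies $\mathbf{1}_B(\{\mathbf{x}\})\le M(\mathbf{x})$ since every $A_i^{+}$ is nonnegative. A genuine pointwise minorant needs a small adjustment, because the $A_i^{-}$ can be negative and $\prod_i A_i^{-}$ need not lie below $\mathbf{1}_B$; instead I would set
\[
\widetilde m(\mathbf{x})=M(\mathbf{x})-\sum_{i=1}^d\bigl(A_i^{+}(x_i)-A_i^{-}(x_i)\bigr)\prod_{j\ne i}A_j^{+}(x_j),
\]
and a short induction on $d$ (using $A_i^{+}\ge0$ and $0\le A_i^{+}-\mathbf{1}_{[a_i,b_i)}\le A_i^{+}-A_i^{-}$) shows $\widetilde m(\mathbf{x})\le\mathbf{1}_B(\{\mathbf{x}\})$. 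Both $M$ and $\widetilde m$ are $d$-variate trigonometric polynomials with Fourier support inside $\{\mathbf{h}\in\mathbb{Z}^d:\|\mathbf{h}\|_\infty\le H\}$; multiplying the one-dimensional coefficient bounds coordinatewise (and using $|h_i|\le H$ in each nonzero coordinate) yields $|\widehat{M}(\mathbf{h})|,|\widehat{\widetilde m}(\mathbf{h})|\ll_d 1/u(\mathbf{h})$, while the $L^1$-estimate above gives $\widehat{M}(\mathbf{0})=\mu(B)+O_d(1/H)$ and $\widehat{\widetilde m}(\mathbf{0})=\mu(B)+O_d(1/H)$.

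Since $\widetilde m(\mathbf{x})\le\mathbf{1}_B(\{\mathbf{x}\})\le M(\mathbf{x})$, the deviation $\frac1L\#\{n\in[1,L]_{\mathbb{Z}}:\{\mathbf{x}_n\}\in B\}-\mu(B)$ lies between $\frac1L\sum_{n=1}^L\widetilde m(\mathbf{x}_n)-\mu(B)$ and $\frac1L\sum_{n=1}^L M(\mathbf{x}_n)-\mu(B)$. Expanding $M$ in its finite Fourier series and separating off $\mathbf{h}=\mathbf{0}$,
\[
\frac1L\sum_{n=1}^L M(\mathbf{x}_n)-\mu(B)
=\bigl(\widehat{M}(\mathbf{0})-\mu(B)\bigr)
+\sum_{\substack{0<\|\mathbf{h}\|_\infty\le H\\ \mathbf{h}\in\mathbb{Z}^d}}\widehat{M}(\mathbf{h})\cdot\frac1L\sum_{n=1}^L e(\langle\mathbf{h},\mathbf{x}_n\rangle),
\]
where the first summand is $O_d(1/H)$ and the second is $\ll_d\sum_{0<\|\mathbf{h}\|_\infty\le H}\frac1{u(\mathbf{h})}\bigl|\frac1L\sum_{n=1}^L e(\langle\mathbf{h},\mathbf{x}_n\rangle)\bigr|$; the identical bound holds with $\widetilde m$ in place of $M$. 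Taking the supremum over all boxes $B\subseteq[0,1)^d$ then gives the asserted bound on $D(\mathbf{x}_1,\dots,\mathbf{x}_L)$.

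The only genuinely substantial point is the one-dimensional construction: one needs majorant and minorant trigonometric polynomials of bounded degree that are simultaneously close to $\mathbf{1}_{[a,b)}$ in $L^1$ and whose Fourier coefficients decay like $1/|h|$ with an implied constant independent of the interval $[a,b)$. This is exactly the extremal problem solved by Beurling, Selberg and Vaaler (or, in the form originally needed, the kernel estimates carried out by Koksma and Sz\"usz themselves), and once it is in hand the remainder is routine manipulation of geometric and harmonic sums. For a fully self-contained treatment one may simply appeal to \cite{Koksma,Szusz}.
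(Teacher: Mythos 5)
The paper does not actually prove this lemma; it is stated as a classical result with a citation to \cite{Koksma,Szusz}, so there is no in-paper argument to compare against. Your sketch is the standard Fourier-analytic proof of the multidimensional Erd\H{o}s--Tur\'an--Koksma inequality, and it is correct in outline: the product of the one-dimensional majorants is a valid majorant because each $A_i^{+}$ is nonnegative (being $\ge \mathbf{1}_{[a_i,b_i)}\ge 0$), and your corrected minorant $\widetilde m$ does lie below $\mathbf{1}_B$ --- the telescoping identity $\prod_i \mathbf{1}_i-\prod_i A_i^{+}=-\sum_i (A_i^{+}-\mathbf{1}_i)\prod_{j<i}\mathbf{1}_j\prod_{j>i}A_j^{+}$ together with $0\le A_i^{+}-\mathbf{1}_i\le A_i^{+}-A_i^{-}$ and $\mathbf{1}_j\le A_j^{+}$ gives exactly the inequality you assert. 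The coefficient bounds $|\widehat{M}(\mathbf{h})|,|\widehat{\widetilde m}(\mathbf{h})|\ll_d 1/u(\mathbf{h})$ and $\widehat{M}(\mathbf{0}),\widehat{\widetilde m}(\mathbf{0})=\mu(B)+O_d(1/H)$ then follow by multiplying the one-dimensional estimates, and the sandwich plus the supremum over boxes finishes the proof. The one genuinely substantial ingredient, as you note, is the one-dimensional extremal majorant/minorant of bounded degree with $L^1$-error $O(1/(H+1))$ and Fourier coefficients $O(1/\max\{1,|h|\})$; since you defer that to Beurling--Selberg--Vaaler (or to the Fej\'er-kernel constructions in the original papers), your argument is at the same level of rigor as the paper's, which simply cites the result, and is strictly more informative.
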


The above inequality is sometimes referred as the Erd\H{o}s-Tur\'an-Koksma inequality.
Thanks to Lemma~\ref{ETK}, 
it suffices to evaluate exponential sums in order to find upper bounds for discrepancies.
Next, let us state the following lemmas that are used to evaluate exponential sums.

\begin{lemma}[Kusmin-Landau]\label{1stderiv}
	Let $\cI$ be an interval of $\mathbb{R}$, 
	and $f\colon \cI\to\mathbb{R}$ be a $C^1$ function such that $f'$ is monotone.
	If $\lambda_1>0$ satisfies that 
	\[
	\lambda_1 \leq \min\{|f'(x)-n| : n\in\mathbb{Z}\}
	\] 
	for all $x\in\cI$, then 
	\[
	\sum_{n\in I_{\mathbb{Z}}} e(f(n)) \ll \lambda_1^{-1}. 
	\]
\end{lemma}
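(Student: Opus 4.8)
The statement is the classical first-derivative test (van der Corput's lemma), and the plan is to prove it by the usual telescoping identity followed by summation by parts. First I would dispose of degenerate cases: since $\min_{m\in\mathbb{Z}}|f'(x)-m|\le 1/2$ for every $x$, the hypothesis forces $\lambda_1\le 1/2$, so whenever $\cI_{\mathbb{Z}}$ contains at most one integer the trivial bound $\bigl|\sum_{n\in\cI_{\mathbb{Z}}}e(f(n))\bigr|\le 1\le\lambda_1^{-1}$ already suffices; hence I may assume $\cI_{\mathbb{Z}}=\{a,a+1,\dots,b\}$ with $a<b$. Because $f$ is $C^1$ and $\cI$ is an interval, $f'$ is continuous and monotone, so its image is an interval disjoint from every $(m-\lambda_1,m+\lambda_1)$ and therefore contained in a single strip $[m_0+\lambda_1,\,m_0+1-\lambda_1]$. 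Replacing $f(x)$ by $f(x)-m_0x$ leaves every value $e(f(n))$ with $n\in\mathbb{Z}$ unchanged, so I may assume $\lambda_1\le f'(x)\le 1-\lambda_1$ throughout $\cI$; by the mean value theorem the consecutive differences $w_n\coloneqq f(n+1)-f(n)$ then lie in $[\lambda_1,1-\lambda_1]$ and, $f'$ being monotone, form a monotone sequence in $n$.

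Next I would use the telescoping identity
\[
e(f(n))=\frac{e(f(n))-e(f(n+1))}{1-e(w_n)}\qquad(a\le n\le b-1),
\]
which is legitimate because $|1-e(w_n)|=2\sin(\pi w_n)\ge 2\sin(\pi\lambda_1)\ge 4\lambda_1>0$. Summing over $n=a,\dots,b-1$ and adding back $e(f(b))$ gives
\[
\sum_{n=a}^{b}e(f(n))=e(f(b))+\sum_{n=a}^{b-1}\frac{e(f(n))-e(f(n+1))}{1-e(w_n)},
\]
and I would apply summation by parts to the second sum with $u_n\coloneqq e(f(n))-e(f(n+1))$ --- whose partial sums $U_n=e(f(a))-e(f(n+1))$ satisfy $|U_n|\le 2$ --- and with weights $v_n\coloneqq 1/(1-e(w_n))$. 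This reduces everything to bounding $|v_{b-1}|\le(4\lambda_1)^{-1}$ and the total variation $\sum_n|v_n-v_{n+1}|$.

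The crux is the variation sum, and here the key observation is the elementary identity $1/(1-e(t))=\tfrac12+\tfrac{i}{2}\cot(\pi t)$: thus $v_n=\tfrac12+\tfrac{i}{2}\cot(\pi w_n)$ moves along a fixed vertical line. Since $t\mapsto\cot(\pi t)$ is monotone on $(0,1)$ and $(w_n)$ is monotone, the real sequence $(\cot(\pi w_n))_n$ is monotone, so the variation telescopes:
\[
\sum_{n}|v_n-v_{n+1}|=\tfrac12\bigl|\cot(\pi w_a)-\cot(\pi w_{b-1})\bigr|\le\cot(\pi\lambda_1)\le\frac{1}{\sin(\pi\lambda_1)}\le\frac{1}{2\lambda_1},
\]
using $|\cot(\pi t)|\le\cot(\pi\lambda_1)$ for $t\in[\lambda_1,1-\lambda_1]$ and $\sin(\pi\lambda_1)\ge 2\lambda_1$. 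Feeding these bounds into the summation-by-parts formula, together with $|e(f(b))|=1\le\lambda_1^{-1}$, yields $\bigl|\sum_{n\in\cI_{\mathbb{Z}}}e(f(n))\bigr|\ll\lambda_1^{-1}$, as required.

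I expect the only genuinely delicate point to be the reduction in the first step: verifying carefully that the image of the continuous monotone function $f'$ lies in one strip $[m_0+\lambda_1,\,m_0+1-\lambda_1]$, that the substitution $f\mapsto f-m_0x$ is harmless, and that the monotonicity of $(w_n)$ --- needed for the telescoping of $\sum|v_n-v_{n+1}|$ --- survives. The remaining ingredients (the inequalities $2\sin\pi t\ge 4\lambda_1$ and $\cot\pi\lambda_1\le(2\lambda_1)^{-1}$ on $[\lambda_1,1-\lambda_1]$, the identity for $1/(1-e(t))$, and the bookkeeping in Abel summation) are all routine.
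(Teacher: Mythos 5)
Your proof is correct: the reduction to $\lambda_1\le f'\le 1-\lambda_1$ after subtracting $m_0x$, the telescoping identity, the Abel summation with weights $1/(1-e(w_n))=\tfrac12+\tfrac{i}{2}\cot(\pi w_n)$, and the telescoping of the variation via monotonicity of $(w_n)$ all check out (including the elementary bounds $\sin(\pi\lambda_1)\ge2\lambda_1$ and $\cot(\pi\lambda_1)\le(2\lambda_1)^{-1}$). The paper does not prove this lemma but simply cites \cite[Theorem~2.1]{GK}, and your argument is essentially the standard proof given there, so there is nothing to reconcile.
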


\begin{lemma}[van der Corput]\label{2ndderiv}
	Let $\cI$ be an interval of $\mathbb{R}$ and 
	$f\colon \cI\to\mathbb{R}$ be a $C^2$ function, 
	and let $c\ge1$.
	If $\lambda_2>0$ satisfies that 
	\[
	\lambda_2 \le |f''(x)| \le c\lambda_2
	\] 
	for all $x\in\cI$, then 
	\[
	\sum_{n\in\cI_{\mathbb{Z}}} e(f(n))
	\ll_c \abs{\cI}\lambda_2^{1/2} + \lambda_2^{-1/2},
	\]
	where $|\cI|$ denotes the length of the interval $\cI$.
\end{lemma}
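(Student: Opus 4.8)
The plan is to pass from the exponential sum to exponential integrals by van der Corput's ``process $B$'' (a truncated Poisson summation formula), and then to bound each integral by the second-derivative test for exponential integrals. First, $f''$ is continuous and nowhere zero on $\cI$, hence of constant sign; replacing $f$ by $-f$ if necessary (which conjugates the sum, leaving $\abs{\sum_{n\in\cI_{\mathbb{Z}}}e(f(n))}$ unchanged) we may assume $f''>0$ on $\cI$, so that $f'$ is strictly increasing with range an interval $[\alpha,\beta]$ of length $\beta-\alpha=\int_{\cI}f''\le c\lambda_2\abs{\cI}$. Disposing of the trivial cases by the estimate $\#\cI_{\mathbb{Z}}\le\abs{\cI}+1$, we assume henceforth $\abs{\cI}\ge1$ and $0<\lambda_2<1$.

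The analytic heart is the classical estimate: if $g\in C^2(J)$ satisfies $g''\ge\rho>0$ throughout an interval $J$, then $\abs{\int_J e(g(x))\,dx}\ll\rho^{-1/2}$. I would prove this by splitting $J$ at the zero $x_0$ of $g'$ (unique if it exists): on $\{x\in J:\abs{x-x_0}\le\rho^{-1/2}\}$ the integral is at most the length $2\rho^{-1/2}$, while on each complementary piece $g'$ is monotone with $\abs{g'}\ge\rho^{1/2}$, so one integration by parts, $e(g)=(2\pi i\,g')^{-1}(e(g))'$, together with the fact that $g''/(g')^2$ has constant sign (so that its integral telescopes), gives a contribution $\ll\rho^{-1/2}$. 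Taking $g(x)=f(x)-\nu x$ and $\rho=\lambda_2$ then yields $\abs{\int_J e(f(x)-\nu x)\,dx}\ll\lambda_2^{-1/2}$ for every $\nu\in\mathbb{Z}$ and every subinterval $J\subseteq\cI$.

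Finally I would bridge the sum and the integrals. Partition $\cI$ into $O(1+\beta-\alpha)=O(1+\lambda_2\abs{\cI})$ consecutive subintervals on each of which $f'$ varies by at most $1$, and on each apply van der Corput's sum-to-integral formula (process $B$): the sum of $e(f(n))$ over the integers of such a subinterval equals $\sum_\nu\int e(f(x)-\nu x)\,dx$ taken over the at most two integers $\nu$ in the local range of $f'$, up to a controlled error $O(1)$ (the remaining additive frequencies contributing negligibly, since for them $f'-\nu$ stays bounded away from $0$). By the previous step each subinterval contributes $O(\lambda_2^{-1/2})$, so summing over the $O(1+\lambda_2\abs{\cI})$ subintervals gives $\sum_{n\in\cI_{\mathbb{Z}}}e(f(n))\ll_c(1+\lambda_2\abs{\cI})\lambda_2^{-1/2}\ll_c\abs{\cI}\lambda_2^{1/2}+\lambda_2^{-1/2}$, as desired. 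The step I expect to be the main obstacle is making process $B$ precise with an $O(1)$ error on each piece (this requires a careful treatment of the endpoints of the subintervals); it is classical but technical, and one may instead quote it from the literature. A cheaper route, which loses a factor of size $(\log\abs{\cI})^{1/2}$, is to apply van der Corput's inequality (Lemma~\ref{diff-ineq}) to reduce to the sums $\sum_n e(f(n+h)-f(n))$, whose phases have derivative $f'(x+h)-f'(x)\asymp h\lambda_2$, and then invoke the Kusmin--Landau estimate (Lemma~\ref{1stderiv}).
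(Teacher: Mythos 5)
The paper offers no proof of this lemma: it simply cites \cite[Theorem~2.2]{GK}, and the argument there is genuinely different from, and much lighter than, yours. Graham and Kolesnik deduce the second derivative test directly from the Kusmin--Landau bound (Lemma~\ref{1stderiv}): assuming without loss of generality $f''\ge\lambda_2>0$, so that $f'$ is increasing with range of length $\le c\lambda_2\abs{\cI}$, one partitions $\cI$ into $O_c(1+\lambda_2\abs{\cI})$ pieces on which $f'$ lies within $1/2$ of a fixed integer $\nu$; on each piece the sub-piece where $\abs{f'-\nu}<\lambda_2^{1/2}$ has length $\le2\lambda_2^{-1/2}$ (since $f'$ grows at rate $\ge\lambda_2$) and is estimated trivially, while Lemma~\ref{1stderiv} applied to $f(x)-\nu x$ with $\lambda_1=\lambda_2^{1/2}$ handles the two flanking sub-pieces; summing gives $O_c\bigl((1+\lambda_2\abs{\cI})(\lambda_2^{-1/2}+1)\bigr)\ll_c\abs{\cI}\lambda_2^{1/2}+\lambda_2^{-1/2}$ for $\lambda_2\le1$. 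Your route through process $B$ is also classical and correct in outline---the splitting into pieces where $f'$ varies by at most $1$, the $O(1)$ count of frequencies per piece, and the bound $\ll\lambda_2^{-1/2}$ for each exponential integral are all right---but it buys nothing here and costs a lot: the truncated Poisson summation with $O(1)$ error per piece, which you yourself single out as the main obstacle and propose to quote, is substantially harder than the entire Kusmin--Landau argument above, so your proof ends up outsourcing its most difficult step rather than its easiest. Three smaller points. In the integral estimate you should also cover the case where $g'$ has no zero in $J$ (take $x_0$ to be the endpoint minimizing $\abs{g'}$; monotonicity of $g'$ still gives $\abs{g'(x)}\ge\lambda_2\abs{x-x_0}$). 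The reduction to $\abs{\cI}\ge1$ and $\lambda_2<1$ by the trivial bound $\#\cI_{\mathbb{Z}}\le\abs{\cI}+1$ does not actually dispose of the corner $\abs{\cI}<1$, $\lambda_2$ large, though this is a defect of the statement itself (present equally in \cite{GK}) and is irrelevant to its use in Lemma~\ref{mainlemma}. Finally, your fallback via Lemma~\ref{diff-ineq} and Lemma~\ref{1stderiv} proves only a logarithmically weaker bound, so it cannot serve as a proof of the lemma as stated.
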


\begin{lemma}[Sargos-Gritsenko]\label{3rdderiv}
	Let $\cI$ be an interval of $\mathbb{R}$ and 
	$f\colon \cI\to\mathbb{R}$ be a $C^3$ function, 
	and let $0<c_1<c_2$.
	If $\lambda_3\in(0,1)$ satisfies that 
	\[
	c_1\lambda_3 \le |f'''(x)| \le c_2\lambda_3
	\]
	for all $x\in\cI$, then 
	\[
	\sum_{n\in\cI_{\mathbb{Z}}} e(f(n))
	\ll_{c_1,c_2} \abs{\cI}\lambda_3^{1/6} + \lambda_3^{-1/3}.
	\]
\end{lemma}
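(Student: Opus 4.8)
The plan is to prove this by van der Corput's method; the estimate is (the sharp form of) his ``third derivative test.'' Write $L=|\cI|$ and $\lambda=\lambda_3$ throughout. First I would record two elementary facts. (i) Since $f'''$ is continuous and $|f'''|\ge c_1\lambda>0$ on the interval $\cI$, it has constant sign there, so for $h>0$ the function $g_h(x):=f(x+h)-f(x)$ has $g_h''(x)=\int_x^{x+h}f'''(t)\,dt$, whence $c_1h\lambda\le|g_h''(x)|\le c_2h\lambda$, with $g_h''$ of constant sign, on $\{x:x,x+h\in\cI\}$. (ii) Since $\lambda<1$, if $L<\lambda^{-1/3}$ the trivial bound already gives $\bigl|\sum_{n\in\cI_{\mathbb{Z}}}e(f(n))\bigr|\le\#\cI_{\mathbb{Z}}\le L+1\ll\lambda^{-1/3}$, so from now on I may assume $L\ge\lambda^{-1/3}$, and in particular $\#\cI_{\mathbb{Z}}\asymp L$.

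The main step is the van der Corput $A$-process. Translating $\cI_{\mathbb{Z}}$ to an interval $\{1,\dots,\#\cI_{\mathbb{Z}}\}$, applying Lemma~\ref{diff-ineq} with the $z_n$ enumerating the values $e(f(m))$, $m\in\cI_{\mathbb{Z}}$, and a parameter $H\in[1,\#\cI_{\mathbb{Z}}]_{\mathbb{Z}}$, bounding each differenced sum $\sum_m e(g_h(m))$ (over the relevant subinterval of $\cI_{\mathbb{Z}}$) by the second-derivative test (Lemma~\ref{2ndderiv}, with $\lambda_2\asymp h\lambda$ and ratio $c_2/c_1$), which yields $\sum_m e(g_h(m))\ll_{c_1,c_2}L(h\lambda)^{1/2}+(h\lambda)^{-1/2}$, and then summing over $h$ with $\sum_{h\le H}h^{1/2}\ll H^{3/2}$ and $\sum_{h\le H}h^{-1/2}\ll H^{1/2}$, I would arrive at
\[
\Bigl|\sum_{n\in\cI_{\mathbb{Z}}}e(f(n))\Bigr|^{2}\ \ll_{c_1,c_2}\ \frac{L^{2}}{H}+L^{2}\lambda^{1/2}H^{1/2}+L\lambda^{-1/2}H^{-1/2}.
\]
The choice $H\asymp\lambda^{-1/3}$ is admissible (here the reduction $L\ge\lambda^{-1/3}$ is used) and balances the first two terms at $L^{2}\lambda^{1/3}$, so after taking square roots the main term $L\lambda^{1/6}$ of the lemma appears at once.

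The hard part is the secondary term. The argument above leaves the third term as $L\lambda^{-1/2}H^{-1/2}\asymp L\lambda^{-1/3}$ inside the square, i.e.\ a contribution $L^{1/2}\lambda^{-1/6}$ to the sum, and this is strictly larger than the claimed $\lambda^{-1/3}$ as soon as $L>\lambda^{-1/3}$; eliminating the parasitic factor $L^{1/2}$ is exactly what distinguishes the sharp third-derivative test (attributed here to Gritsenko and Sargos) from the textbook one. The refinement I would try is to difference once more: each $g_h$ has a second derivative of constant sign and of size $\asymp h\lambda$, so differencing it by $h'$ produces a phase $\Delta_{h'}g_h$ with first derivative of constant sign and of size $\asymp hh'\lambda$, to which the first-derivative test (Kusmin--Landau, Lemma~\ref{1stderiv}) applies whenever $hh'\lambda<1/4$, giving $\ll(hh'\lambda)^{-1}$; one then decomposes the ranges of $h,h'$ dyadically, keeping their product of size $\asymp\lambda^{-1}$, and optimizes the two differencing parameters against one another. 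Making this output exactly $\lambda^{-1/3}$ — rather than a cruder bound such as $L^{3/4}(\log L)^{1/4}$ that naive double differencing yields in an intermediate range of $L$ — together with the book-keeping of the logarithmic factors from the $\sum h^{-1}$ sums and the case distinctions governed by the position of $L$ relative to $\lambda^{-1/3}$ and $\lambda^{-2/3}$, is the technical heart of the proof; this is where I would expect essentially all of the effort to go, and it is the reason the statement is quoted from the literature rather than reproved from scratch here.
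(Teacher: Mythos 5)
The paper offers no proof of this lemma at all --- its ``proof'' is the citation to Sargos \cite{Sargos} and Gritsenko \cite{Grisenko} --- so there is no internal argument to measure yours against; the only question is whether your sketch actually establishes the stated bound. Your preliminary reductions (constant sign of $f'''$, the trivial bound when $|\cI|<\lambda_3^{-1/3}$) and your derivation of the main term $|\cI|\lambda_3^{1/6}$ (one Weyl--van der Corput differencing via Lemma~\ref{diff-ineq}, the second-derivative test Lemma~\ref{2ndderiv} on each differenced sum, $H\asymp\lambda_3^{-1/3}$) are correct. You also correctly identify that this leaves a secondary term $|\cI|^{1/2}\lambda_3^{-1/6}$, which exceeds the claimed $\lambda_3^{-1/3}$ precisely when $\lambda_3^{-1/3}<|\cI|<\lambda_3^{-2/3}$; for $|\cI|\ge\lambda_3^{-2/3}$ it is absorbed into the main term, so your argument does prove the lemma outside that middle range.

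The gap is that middle range, and the repair you propose would not close it. Differencing a second time and finishing with Kusmin--Landau (Lemma~\ref{1stderiv}) is the $A^2$-process: to push the two unavoidable terms $|\cI|H^{-1/2}$ and $|\cI|(H')^{-1/4}$ below $|\cI|\lambda_3^{1/6}$ one is forced to take $H\gg\lambda_3^{-1/3}$ and $H'\gg\lambda_3^{-2/3}$, which violates the constraint $HH'\lambda_3\ll1$ required for the first derivative of $\Delta_{h'}\Delta_h f$ to stay away from the nonzero integers; so this route cannot even reproduce the main term, let alone sharpen the secondary one. Replacing $|\cI|^{1/2}\lambda_3^{-1/6}$ by $\lambda_3^{-1/3}$ is the actual content of the Sargos--Gritsenko theorem and rests on a genuinely different mechanism (a spacing and counting argument for the points where $f'$ is close to an integer in \cite{Sargos}, and Gritsenko's variant in \cite{Grisenko}), which your write-up, by your own admission, defers to the literature. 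As it stands the proposal proves only $\ll_{c_1,c_2}|\cI|\lambda_3^{1/6}+|\cI|^{1/2}\lambda_3^{-1/6}+\lambda_3^{-1/3}$; note that substituting this weaker bound into Step~2 of Lemma~\ref{mainlemma} would degrade the exponents in Proposition~\ref{mainprop} and Theorem~\ref{main2} when $\alpha<3/2$, so the sharp secondary term is genuinely needed and the lemma should continue to be quoted from the literature rather than regarded as proved by this sketch.
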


Lemmas~\ref{1stderiv} and \ref{2ndderiv} are called the first and second derivative tests, respectively.
One can see their proofs in \cite[Theorems~2.1 and 2.2]{GK}.
Lemma~\ref{3rdderiv} was shown by Sargos \cite{Sargos} and Grisenko \cite{Grisenko} independently.
Using Lemmas~\ref{1stderiv}--\ref{3rdderiv}, 
we evaluate discrepancies.

\begin{lemma}\label{mainlemma}
	Let $\alpha\in(1,2)$, $r\in\mathbb{N}$, and $c>0$.
	Then, there exists $N_0=N_0(\alpha,r)\in\mathbb{N}$ such that 
	for all $N\in[N_0,\infty)_{\mathbb{Z}}$ and $L\in[1,cN]_{\mathbb{Z}}$, 
	the discrepancy $D(N,L)$ of the sequence $\bigl( (n^\alpha, r\alpha n^{\alpha-1}) \bigr)_{n=N}^{N+L-1}$ satisfies 
	\begin{equation*}
		D(N,L) \ll_{\alpha,c}
		\begin{cases}
			N^{(\alpha-2)/3}\log N + N^{2-\alpha}/L & \alpha\in(1,2),\\
			N^{(\alpha-3)/7}\log N + N^{2-\alpha}/L & \alpha\in(1,3/2),\\
			(N^{(\alpha-3)/7} + N^{(3-\alpha)/3}/L)\log N & \alpha\in[3/2,11/6).
		\end{cases}
	\end{equation*}
\end{lemma}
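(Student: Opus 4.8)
The plan is to bound $D(N,L)$ through the Erd\H{o}s--Tur\'an--Koksma inequality (Lemma~\ref{ETK}) in dimension $2$ and reduce everything to one-dimensional exponential sums. Writing $\mathbf{h}=(h_0,h_1)\in\mathbb{Z}^2$ and
\[
f_{\mathbf{h}}(x)=h_0x^\alpha+h_1r\alpha x^{\alpha-1},
\]
Lemma~\ref{ETK} gives, for any $H\in\mathbb{N}$,
\[
D(N,L)\ll\frac{1}{H}+\sum_{0<\|\mathbf{h}\|_\infty\le H}\frac{1}{u(\mathbf{h})}\biggl|\frac{1}{L}\sum_{n=N}^{N+L-1}e(f_{\mathbf{h}}(n))\biggr|,\qquad u(\mathbf{h})=\max\{1,|h_0|\}\max\{1,|h_1|\},
\]
so the task is to estimate the inner sum uniformly in $\mathbf{h}$, sum against $1/u(\mathbf{h})$, and optimise $H$ at the end. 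Throughout I would split the range of $\mathbf{h}$ according to whether $h_0=0$ or $h_0\ne0$.

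The condition $L\le cN$ is invoked at once: it forces $x\asymp_c N$ for all $x\in[N,N+L]$, so the derivatives of $f_{\mathbf{h}}$ are easy to control there. For $h_0\ne0$ the top-order term of $f_{\mathbf{h}}''(x)=h_0\alpha(\alpha-1)x^{\alpha-2}+h_1r\alpha(\alpha-1)(\alpha-2)x^{\alpha-3}$ (and similarly of $f_{\mathbf{h}}'''$) dominates as soon as $H=o(N)$, because $|h_1|\le H\ll N\le|h_0|N$; hence $|f_{\mathbf{h}}''(x)|\asymp_\alpha|h_0|N^{\alpha-2}$ and $0<|f_{\mathbf{h}}'''(x)|\asymp_\alpha|h_0|N^{\alpha-3}<1$ uniformly on the interval. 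The second derivative test (Lemma~\ref{2ndderiv}) then yields $\sum_n e(f_{\mathbf{h}}(n))\ll_\alpha L|h_0|^{1/2}N^{(\alpha-2)/2}+|h_0|^{-1/2}N^{(2-\alpha)/2}$, and the third derivative test (Lemma~\ref{3rdderiv}) yields $\ll_\alpha L|h_0|^{1/6}N^{(\alpha-3)/6}+|h_0|^{-1/3}N^{(3-\alpha)/3}$. For $h_0=0$ the phase is $h_1r\alpha x^{\alpha-1}$, whose derivative is monotone with $|f_{\mathbf{h}}'(x)|\asymp_{\alpha,c}|h_1|N^{\alpha-2}$; once $H$ is taken below as a power $N^\theta$ with $0<\theta<2-\alpha$, this is $<1/2$ for $N$ large (depending on $\alpha$ and $r$), so the nearest integer is $0$ and the Kusmin--Landau test (Lemma~\ref{1stderiv}) gives $\sum_n e(f_{\mathbf{h}}(n))\ll_{\alpha,c}N^{2-\alpha}/|h_1|$ (using $r\ge1$).

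Dividing by $L$ and by $u(\mathbf{h})$ and summing, the $h_0=0$ terms contribute $\ll_{\alpha,c}\sum_{1\le|h_1|\le H}N^{2-\alpha}/(|h_1|^2L)\ll N^{2-\alpha}/L$. In the $h_0\ne0$ terms one peels off $\sum_{|h_1|\le H}1/\max\{1,|h_1|\}\ll\log H\asymp\log N$, uses $\sum_{h_0=1}^H|h_0|^{-1/2}\ll H^{1/2}$ and $\sum_{h_0=1}^H|h_0|^{-5/6}\ll H^{1/6}$ together with the convergence of $\sum_{h_0\ge1}|h_0|^{-3/2}$ and $\sum_{h_0\ge1}|h_0|^{-4/3}$, and absorbs $\log$-factors via $(2-\alpha)/2<2-\alpha$ (all $\alpha<2$) and $(3-\alpha)/3<2-\alpha$ (when $\alpha<3/2$). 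This produces $D(N,L)\ll_{\alpha,c}H^{-1}+H^{1/2}N^{(\alpha-2)/2}\log N+N^{2-\alpha}/L$ from the second derivative test and $D(N,L)\ll_{\alpha,c}H^{-1}+H^{1/6}N^{(\alpha-3)/6}\log N+(N^{(3-\alpha)/3}/L)\log N+N^{2-\alpha}/L$ from the third. Choosing $H=\lceil N^{(2-\alpha)/3}\rceil$ in the first balances $H^{-1}$ against $H^{1/2}N^{(\alpha-2)/2}$ and gives the first line of the Lemma for all $\alpha\in(1,2)$; choosing $H=\lceil N^{(3-\alpha)/7}\rceil$ in the second balances $H^{-1}$ against $H^{1/6}N^{(\alpha-3)/6}$, and the side condition $\theta=(3-\alpha)/7<2-\alpha$ forces precisely $\alpha<11/6$, giving $N^{(\alpha-3)/7}\log N+(N^{(3-\alpha)/3}/L)\log N+N^{2-\alpha}/L$; comparing exponents, this collapses to the second line when $\alpha<3/2$ (then $N^{2-\alpha}/L$ dominates) and to the third line when $\alpha\in[3/2,11/6)$ (then $(N^{(3-\alpha)/3}/L)\log N$ dominates $N^{2-\alpha}/L$). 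The only genuine obstacle is technical rather than conceptual: checking that the three derivative tests apply with implicit constants depending only on $\alpha$ and $c$, i.e.\ controlling the lower-order parts of $f_{\mathbf{h}}',f_{\mathbf{h}}'',f_{\mathbf{h}}'''$ and the ranges of $\lambda_1,\lambda_2,\lambda_3$ uniformly over $0<\|\mathbf{h}\|_\infty\le H$ and over $1\le L\le cN$, while keeping track of exactly where the threshold $\alpha<11/6$ is forced.
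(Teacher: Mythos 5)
Your proposal matches the paper's proof essentially step for step: the same Erd\H{o}s--Tur\'an--Koksma reduction in dimension $2$, the same split on $h_0=0$ versus $h_0\neq0$, the same derivative tests (van der Corput for the first line, Sargos--Gritsenko for the second and third, Kusmin--Landau for $h_0=0$), the same choices $H\approx N^{(2-\alpha)/3}$ and $H\approx N^{(3-\alpha)/7}$, and the same identification of where the threshold $\alpha<11/6$ enters. The uniformity checks you flag as the remaining technical work are exactly what the paper disposes of by fixing $N_0(\alpha,r)$ large enough that $rH/N\le 1/2$ and the Kusmin--Landau hypothesis $|g'(x)|\le 1/2$ hold throughout.
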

\begin{proof}
	Let $f(x)=x^\alpha$.
	Lemma~\ref{ETK} with $d=2$ implies that for all $L,N,H\in\mathbb{N}$, 
	\[
	D(N,L) \ll \frac{1}{H} + \sum_{\substack{|h_0|,|h_1|\le H \\ (h_0,h_1)\neq(0,0)}} \frac{1}{u(h_0,h_1)}
	\abs{\frac{1}{L}\sum_{n=N}^{N+L-1} e(h_0f(n)+h_1rf'(n))}.
	\]
	Taking an integer 
	\begin{align*}
		&\quad N_0 = N_0(\alpha,r)\\
		&\ge \max\{ (2r)^{3/(1+\alpha)}, 2^{3/(2-\alpha)}, (4r)^{3/2(2-\alpha)},\ 
		(2r)^{7/(4+\alpha)}, 2^{7/(3-\alpha)}, (4r)^{7/(11-6\alpha)} \},
	\end{align*}
	we evaluate the right-hand side above in two ways.
	\par\setcounter{count}{0}
	\noindent\textbf{Step~\num.}
	Let us show that for all $N\in[N_0,\infty)_{\mathbb{Z}}$ and $L\in[1,cN]_{\mathbb{Z}}$, 
	\begin{equation}
		D(N,L) \ll_{\alpha,c} N^{(\alpha-2)/3}\log N + N^{2-\alpha}/L. \label{eq15}
	\end{equation}
	Take $N\in[N_0,\infty)_{\mathbb{Z}}$ and $L\in[1,cN]_{\mathbb{Z}}$ arbitrarily, 
	and put $H=\lfloor{N^{(2-\alpha)/3}}\rfloor$.
	Then, note that $rH/N\le rN^{-(1+\alpha)/3}\le rN_0^{-(1+\alpha)/3}\le 1/2$ 
	and $\log H\ge\log2$.
	Consider the case when $|h_0|,|h_1|\le H$ and $h_0\not=0$.
	When $x\in[N,N+L-1]$, the function $g(x) = h_0f(x) + h_1rf'(x)$ satisfies that 
	\begin{align*}
		\abs{g''(x)} &\le \abs{h_0}f''(x)(1 + rH\abs{f'''(x)/f''(x)})\\
		&\ll \abs{h_0}N^{\alpha-2}(1+rH/N)
		\ll \abs{h_0}N^{\alpha-2},\\
		\abs{g''(x)} &\ge \abs{h_0}f''(x)(1 - rH\abs{f'''(x)/f''(x)})\\
		&\gg_\alpha \abs{h_0}(N+L)^{\alpha-2}(1-rH/N)
		\gg_c \abs{h_0}N^{\alpha-2}.
	\end{align*}
	Thus, Lemma~\ref{2ndderiv} implies that 
	\[
	\frac{1}{L}\sum_{n=N}^{N+L-1} e(h_0f(n)+h_1rf'(n))
	\ll_{\alpha,c} \abs{h_0}^{1/2}N^{(\alpha-2)/2} + \abs{h_0}^{-1/2}N^{(2-\alpha)/2}/L.
	\]
	Therefore, it follows that 
	\begin{align*}
		&\quad \sum_{\substack{|h_0|,|h_1|\le H \\ h_0\neq0}} \frac{1}{u(h_0,h_1)}
		\abs{\frac{1}{L}\sum_{n=N}^{N+L-1} e(h_0f(n)+h_1rf'(n))}\\
		&\ll_{\alpha,c} \sum_{\substack{|h_0|,|h_1|\le H \\ h_0\neq0}}
		\frac{\abs{h_0}^{1/2}N^{(\alpha-2)/2} + \abs{h_0}^{-1/2}N^{(2-\alpha)/2}/L}{u(h_0,h_1)}\\
		&\ll \Bigl( \sum_{h_1=1}^H \frac{1}{h_1} + 1 \Bigr)
		\sum_{h_0=1}^H \bigl( h_0^{-1/2}N^{(\alpha-2)/2} + h_0^{-3/2}N^{(2-\alpha)/2}/L \bigr)\\
		&\ll (\log H)(H^{1/2}N^{(\alpha-2)/2} + N^{(2-\alpha)/2}/L)
		\ll (N^{(\alpha-2)/3} + N^{(2-\alpha)/2}/L)\log N.
	\end{align*}
	\par
	Next, consider the case when $1\le|h_1|\le H$ and $h_0=0$.
	When $x\in[N,N+L-1]$, the function $g(x) = h_1rf'(x)$ satisfies that 
	\begin{gather*}
		\abs{g'(x)} = r\abs{h_1}f''(x) \le 2rHN^{\alpha-2} \le 2rN^{(2/3)(\alpha-2)} \le 2rN_0^{(2/3)(\alpha-2)} \le 1/2,\\
		\abs{g'(x)} \gg_\alpha \abs{h_1}(N+L)^{\alpha-2}
		\gg_c \abs{h_1}N^{\alpha-2}.
	\end{gather*}
	This yields that $\min\{|g'(x)-m| : m\in\mathbb{Z}\} = |g'(x)|$ for all $x\in[N,N+L-1]$.
	Thus, Lemma~\ref{1stderiv} implies that 
	\[
	\frac{1}{L}\sum_{n=N}^{N+L-1} e(h_1rf'(n))
	\ll_{\alpha,c} \abs{h_1}^{-1}N^{2-\alpha}/L.
	\]
	Therefore, it follows that 
	\begin{equation}
	\begin{split}
		&\quad \sum_{\substack{1\le|h_1|\le H \\ h_0=0}} \frac{1}{u(h_0,h_1)}
		\abs{\frac{1}{L}\sum_{n=N}^{N+L-1} e(h_0f(n)+h_1rf'(n))}\\
		&\ll_{\alpha,c} \sum_{1\le|h_1|\le H} \frac{\abs{h_1}^{-1}N^{2-\alpha}/L}{\abs{h_1}}
		\ll N^{2-\alpha}/L.
	\end{split}\label{eq14}
	\end{equation}
	Summarizing the above two cases, we have 
	\begin{align*}
		D(N,L) &\ll \frac{1}{H} + \sum_{\substack{|h_0|,|h_1|\le H \\ (h_0,h_1)\neq(0,0)}} \frac{1}{u(h_0,h_1)}
		\abs{\frac{1}{L}\sum_{n=N}^{N+L-1} e(h_0f(n)+h_1rf'(n))}\\
		&\ll_{\alpha,c} N^{(\alpha-2)/3} + (N^{(\alpha-2)/3} + N^{(2-\alpha)/2}/L)\log N + N^{2-\alpha}/L\\
		&\ll N^{(\alpha-2)/3}\log N + N^{2-\alpha}/L,
	\end{align*}
	which is just \eqref{eq15}.
	\par
	\noindent\textbf{Step~\num.}
	Assume $\alpha\in(1,11/6)$.
	Let us show that for all $N\in[N_0,\infty)_{\mathbb{Z}}$ and $L\in[1,cN]_{\mathbb{Z}}$, 
	\begin{equation}
		D(N,L) \ll_{\alpha,c}
		\begin{cases}
			N^{(\alpha-3)/7}\log N + N^{2-\alpha}/L & \alpha\in(1,3/2),\\
			(N^{(\alpha-3)/7} + N^{(3-\alpha)/3}/L)\log N & \alpha\in[3/2,11/6).
		\end{cases}\label{eq15'}
	\end{equation}
	Take $N\in[N_0,\infty)_{\mathbb{Z}}$ and $L\in[1,cN]_{\mathbb{Z}}$ arbitrarily, 
	and put $H=\lfloor{N^{(3-\alpha)/7}}\rfloor$.
	Then, note that $rH/N\le rN^{-(4+\alpha)/7}\le rN_0^{-(4+\alpha)/7}\le 1/2$ 
	and $\log H\ge\log2$.
	Consider the case when $|h_0|,|h_1|\le H$ and $h_0\not=0$.
	When $x\in[N,N+L-1]$, the function $g(x) = h_0f(x) + h_1rf'(x)$ satisfies that 
	\begin{align*}
		\abs{g'''(x)} &\le \abs{h_0f'''(x)}(1 + rH\abs{f''''(x)/f'''(x)})\\
		&\ll \abs{h_0}N^{\alpha-3}(1+rH/N)
		\ll \abs{h_0}N^{\alpha-3},\\
		\abs{g'''(x)} &\ge \abs{h_0}f'''(x)(1 - rH\abs{f''''(x)/f'''(x)})\\
		&\gg_\alpha \abs{h_0}(N+L)^{\alpha-3}(1-rH/N)
		\gg_c \abs{h_0}N^{\alpha-3}.
	\end{align*}
	Since $0 < \abs{h_0}N^{\alpha-3} \le HN^{\alpha-3} \le N^{(6/7)(\alpha-3)} < 1$, 
	Lemma~\ref{3rdderiv} implies that 
	\[
	\frac{1}{L}\sum_{n=N}^{N+L-1} e(h_0f(n)+h_1rf'(n))
	\ll_{\alpha,c} \abs{h_0}^{1/6}N^{(\alpha-3)/6} + \abs{h_0}^{-1/3}N^{(3-\alpha)/3}/L.
	\]
	Therefore, it follows that 
	\begin{align*}
		&\quad \sum_{\substack{|h_0|,|h_1|\le H \\ h_0\neq0}} \frac{1}{u(h_0,h_1)}
		\abs{\frac{1}{L}\sum_{n=N}^{N+L-1} e(h_0f(n)+h_1rf'(n))}\\
		&\ll_{\alpha,c} \sum_{\substack{|h_0|,|h_1|\le H \\ h_0\neq0}}
		\frac{\abs{h_0}^{1/6}N^{(\alpha-3)/6} + \abs{h_0}^{-1/3}N^{(3-\alpha)/3}/L}{u(h_0,h_1)}\\
		&\ll \Bigl( \sum_{h_1=1}^H \frac{1}{h_1} + 1 \Bigr)
		\sum_{h_0=1}^H \bigl( h_0^{-5/6}N^{(\alpha-3)/6} + h_0^{-4/3}N^{(3-\alpha)/3}/L \bigr)\\
		&\ll (\log H)(H^{1/6}N^{(\alpha-3)/6} + N^{(3-\alpha)/3}/L)
		\ll (N^{(\alpha-3)/7} + N^{(3-\alpha)/3}/L)\log N.
	\end{align*}
	\par
	Next, consider the case when $1\le|h_1|\le H$ and $h_0=0$.
	When $x\in[N,N+L-1]$, the function $g(x) = h_1rf'(x)$ satisfies that 
	\begin{gather*}
		\abs{g'(x)} = r\abs{h_1}f''(x) \le 2rHN^{\alpha-2} \le 2rN^{(6\alpha-11)/7} \le 2rN_0^{(6\alpha-11)/7} \le 1/2,\\
		\abs{g'(x)} \gg_{\alpha,c} \abs{h_1}N^{\alpha-2}.
	\end{gather*}
	This yields that $\min\{|g'(x)-m| : m\in\mathbb{Z}\} = |g'(x)|$ for all $x\in[N,N+L-1]$.
	From the same calculation as Step~1, the inequality \eqref{eq14} follows.
	Summarizing the above two cases, we have 
	\begin{align*}
		D(N,L) &\ll \frac{1}{H} + \sum_{\substack{|h_0|,|h_1|\le H \\ (h_0,h_1)\neq(0,0)}} \frac{1}{u(h_0,h_1)}
		\abs{\frac{1}{L}\sum_{n=N}^{N+L-1} e(h_0f(n)+h_1rf'(n))}\\
		&\ll_{\alpha,c} N^{(\alpha-3)/7} + (N^{(\alpha-3)/7} + N^{(3-\alpha)/3}/L)\log N + N^{2-\alpha}/L\\
		&\ll (N^{(\alpha-3)/7} + N^{(3-\alpha)/3}/L)\log N + N^{2-\alpha}/L\\
		&\ll
		\begin{cases}
			N^{(\alpha-3)/7}\log N + N^{2-\alpha}/L & \alpha\in(1,3/2),\\
			(N^{(\alpha-3)/7} + N^{(3-\alpha)/3}/L)\log N & \alpha\in[3/2,11/6),
		\end{cases}
	\end{align*}
	which is just \eqref{eq15'}.
	\par
	Finally, combining \eqref{eq15} and \eqref{eq15'}, 
	we obtain Lemma~\ref{mainlemma}.
\end{proof}

\begin{proof}[Proof of Proposition~$\ref{mainprop}$]
	Take $N_0=N_0(\alpha,r)\in\mathbb{N}$ in Lemma~\ref{mainlemma}.
	Let $f(x)=x^\alpha$, 
	\[
	N'_0 = N'_0(\alpha,k,r)
	= \max\bigl\{ N_0, \bigl\lceil{\bigl( r^2(k-1)^2\alpha(\alpha-1) \bigr)^{1/(2-\alpha)}}\bigr\rceil \bigr\},
	\]
	$N\in[N'_0,\infty)_{\mathbb{Z}}$ and $L\in[1,cN]_{\mathbb{Z}}$.
	Then 
	\begin{equation*}
		\epsilon = \epsilon(N) \coloneqq \frac{r^2(k-1)^2}{2}f''(N) \in (0,1/2).
	\end{equation*}
	The discrepancy and isotropic discrepancy 
	of the sequence $\bigl( (a_0(n),a_1(n)) \bigr)_{n=N}^{N+L-1}$ 
	are denoted by $D(N,L)$ and $J(N,L)$ respectively, 
	where $a_0(n)$ and $a_1(n)$ are defined by \eqref{eq17} with $d=1$.
	Note that $a_0(n)=f(n)$ and $a_1(n)=rf'(n)$.
	Also, define the set $\cQ$ as \eqref{eq16}.
	Recall the proof of Proposition~\ref{main0'}.
	The sets $\cC_{k,2}^{\mp}(\epsilon)$ defined by \eqref{eqC-} and \eqref{eqC+} with $d=1$ 
	satisfy the inclusion relations 
	\begin{gather}
		\bigcup_{s_1\in\mathbb{Z}}
		\{ n\in[N,\infty)_{\mathbb{Z}} : (\{a_0(n)\},\{a_1(n)\}+s_1)\in\cC_{k,2}^{-}(\epsilon) \} \subset \cQ,\nonumber\\
		\cQ\cap[N,\infty) \subset \bigcup_{s_1\in\mathbb{Z}}
		\{ n\in\mathbb{N} : (\{a_0(n)\},\{a_1(n)\}+s_1)\in\cC_{k,2}^{+}(\epsilon) \}. \label{eq12}
	\end{gather}
	Thus, we have that 
	\begin{align*}
		&\quad \frac{\#(\cQ\cap[N,N+L))}{L}
		\ge \sum_{s_1\in\mathbb{Z}}
		\frac{\#\{ n\in[N,N+L)_{\mathbb{Z}} : (\{a_0(n)\},\{a_1(n)\}+s_1)\in\cC_{k,2}^{-}(\epsilon) \}}{L}\\
		&\ge \sum_{s_1\in\mathbb{Z}} \biggl( \mu\Bigl( \cC_{k,2}^{-}(\epsilon)\cap\bigl( [0,1)\times[s_1,s_1+1) \bigr) \Bigr) - J(N,L) \biggr)
		\ge \mu(\cC_{k,2}^{-}(\epsilon)) - C_k^{-}J(N,L)
	\end{align*}
	and 
	\begin{align*}
		&\quad \frac{\#(\cQ\cap[N,N+L))}{L}
		\le \sum_{s_1\in\mathbb{Z}}
		\frac{\#\{ n\in[N,N+L)_{\mathbb{Z}} : (\{a_0(n)\},\{a_1(n)\}+s_1)\in\cC_{k,2}^{+}(\epsilon) \}}{L}\\
		&\le \sum_{s_1\in\mathbb{Z}} \biggl( \mu\Bigl( \cC_{k,2}^{+}(\epsilon)\cap\bigl( [0,1)\times[s_1,s_1+1) \bigr) \Bigr) + J(N,L) \biggr)
		\le \mu(\cC_{k,2}^{+}(\epsilon)) + C_k^{+}J(N,L)
	\end{align*}
	for some $C_k^{\mp}\in\mathbb{N}$, 
	since all the above sums are finite sums.
	(Indeed, we can take $C_k^{\mp}=2$, but this fact is not used here).
	Now, the sets $\cC_{k,2}^{\mp}(\epsilon)$ are simplified as 
	\begin{align*}
		\cC_{k,2}^{-}(\epsilon) &= \{ (y_0,y_1)\in\mathbb{R}^2 : 0\le y_0<1,\ \epsilon\le y_0+(k-1)y_1<1-\epsilon \},\\
		\cC_{k,2}^{+}(\epsilon) &= \{ (y_0,y_1)\in\mathbb{R}^2 : 0\le y_0<1,\ -\epsilon\le y_0+(k-1)y_1<1+\epsilon \},
	\end{align*}
	whence $\mu(\cC_{k,2}^{\mp}(\epsilon))=(1\mp2\epsilon)/(k-1)$.
	Thus, 
	\[
	\abs{\frac{\#(\cQ\cap[N,N+L))}{L} - \frac{1}{k-1}}
	\le \frac{2\epsilon}{k-1} + \max\{C_k^{\mp}\}\cdot J(N,L).
	\]
	Using the inequality \eqref{isotropic} and Lemma~\ref{mainlemma}, 
	we obtain 
	\begin{align*}
		&\quad \abs{\frac{\#(\cQ\cap[N,N+L))}{L} - \frac{1}{k-1}}
		\le \frac{2\epsilon}{k-1} + \max\{C_k^{\mp}\}\cdot2(8\sqrt{2}+1)D(N,L)^{1/2}\\
		&\ll_{k,r} N^{\alpha-2} + D(N,L)^{1/2}\\
		&\ll_{\alpha,c} 
		\begin{cases}
			N^{(\alpha-2)/6}(\log N)^{1/2} + N^{(2-\alpha)/2}/L^{1/2} & \alpha\in(1,2),\\
			N^{(\alpha-3)/14}(\log N)^{1/2} + N^{(2-\alpha)/2}/L^{1/2} & \alpha\in(1,3/2),\\
			(N^{(\alpha-3)/14} + N^{(3-\alpha)/6}/L^{1/2})(\log N)^{1/2} & \alpha\in[3/2,11/6),
		\end{cases}
	\end{align*}
	where the inequality $(x+y)^{1/2} \le x^{1/2}+y^{1/2}$ for $x,y\ge0$ 
	has been used to obtain the last inequality.
\end{proof}

\section{Future work}\label{future}

We have investigated distributions of finite sequences represented by polynomials in $\mathrm{PS}(\alpha)$, 
and especially done the case $\alpha\in(1,2)$ in detail.
We have not proved the convergence in the proof of Theorem~\ref{main01}, 
but the middle-hand side in \eqref{eq02} divided by $N^{2-\alpha/(d+1)}$ probably converges to some positive number as $N\to\infty$.
It is a future work.
As other natural questions, we have the positive-density version and prime-number version.

\begin{question}[Positive-density version]\label{Q1}
	Let $d\in\mathbb{N}$ and $\alpha\in(d,d+1)$; 
	let $A\subset\mathbb{N}$ be a set with positive density, 
	and $k\ge d+2$ and $r\ge1$ be integers.
	Then does 
	\begin{equation}
		\#\{ P\subset A\cap[1,N] : P\in\cP_{k,1},\ (\lfloor{n^\alpha}\rfloor)_{n\in P}\in\cP_{k,d} \}
		\asymp N^{2-\alpha/(d+1)} \quad (N\to\infty) \label{eqQ1}
	\end{equation}
	hold?
\end{question}

\begin{question}[Prime-number version]\label{Q2}
	How about the case when $A$ in Question~\ref{Q1} is replaced with the set of all prime numbers?
	In this case, what is suitable as the right-hand side in \eqref{eqQ1}?
\end{question}

Actually, we can replace the first term $n$ in \eqref{eq:main0} with a prime number $p$: 
for every $f\in\cH$ that satisfies the same assumptions as Theorem~$\ref{main0}$, 
\begin{equation}
	\lim_{N\to\infty} \frac{1}{\pi(N)}
	\#\{ p\in[n_0,N]_{\mathbb{Z}} : (\lfloor{f(p+rj)}\rfloor)_{j=0}^{k-1}\in\cP_{k,d} \}
	= \mu(\cC_{k,d+1}), \label{eq:main0p}
\end{equation}
where $\pi(N)$ denotes the number of prime numbers less than or equal to $N$.
The proof of this statement is the same as that of Theorem~\ref{main0} 
because for every subpolynomial $f\in\cH$ defined on the interval $[n_0,\infty)$, 
the sequence $(f(p))_{p\,\text{prime}\ge n_0}$ is uniformly distributed modulo $1$ 
if and only if $(f(n))_{n=n_0}^\infty$ is uniformly distributed modulo $1$ \cite{BKS}.
In \eqref{eq:main0p}, it is only guaranteed that the first term $p$ is prime.
In order to make all terms $p,p+r,\ldots,p+(k-1)r$ prime, 
we need to study whether $(f(p))_{p\in\cS_{k,r}\cap[n_0,\infty)}$ is uniformly distributed modulo $1$ or not, 
where $\cS_{k,r}$ is the set of all prime numbers $p$ such that all $p,p+r,\ldots,p+(k-1)r$ are prime.
Of course, $r$ must be restricted to some extent depending on $k$.
The set $\cS_{k,r}$ is related to 
twin prime pairs (when $(k,r)=(2,2)$), sexy prime triplets (when $(k,r)=(3,6)$), and generally prime $k$-tuples.
It is known that there exists an even number $r$ such that $\cS_{2,r}$ is infinite \cite{polymath,Maynard}, 
but it is still open whether $\cS_{k,r}$ is infinite for general $k$ and admissible $r$.

Finally, we focus on an asymptotic formula 
when $\alpha$ runs over the interval $(1,2)$.

\begin{question}[Asymptotic formula when $\alpha$ running]\label{Q3}
	Fix a sufficiently large $N\in\mathbb{N}$ and integers $k\ge3$ and $r\ge1$.
	Let 
	\[
	D_{N,k,r}(\alpha) = \frac{1}{N}\#\{ n\in[1,N]_{\mathbb{Z}} : (\lfloor{(n+rj)^\alpha}\rfloor)_{j=0}^{k-1}\in\cP_{k,1} \}.
	\]
	Can we find any asymptotic formulas of $D_{N,k,r}(\alpha)$ 
	when $\alpha$ runs over the interval $(1,2)$?
\end{question}

\begin{figure}[t]
	\centering
    \includegraphics[width=15.0cm]{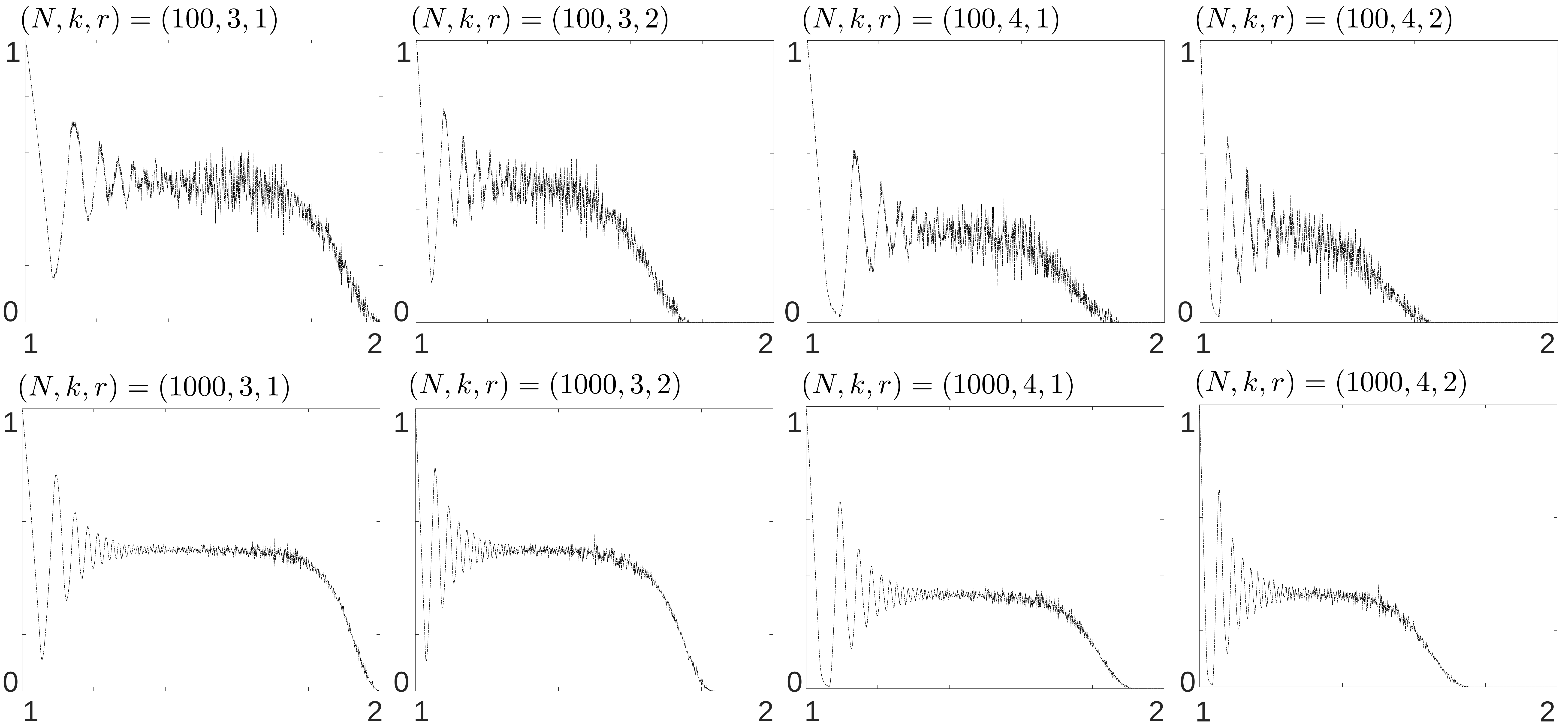}
	\caption{The behavior of $D_{N,k,r}(\alpha)$ for $(N,k,r)\in\{100,1000\}\times\{3,4\}\times\{1,2\}$.
	The abscissa and ordinate denote values of $\alpha$ and $D_{N,k,r}(\alpha)$, respectively.}\label{figure1}
\end{figure}

Figure~\ref{figure1} illustrates the behavior of $D_{N,k,r}(\alpha)$ by numerical computation, 
where the points $(\alpha, D_{N,k,r}(\alpha))$ are plotted 
for all $\alpha\in\{1+0.001i : i\in[0,1000]_{\mathbb{Z}}\}$.
In view of this figure, 
$D_{N,k,r}(\alpha)$ would be approximated by the sum of continuous waves and discrete errors.
In order to theoretically observe a phenomenon like this figure, 
it is probably needed to further analyze 
the distribution of the sequence $\bigl( (n^\alpha, \alpha n^{\alpha-1}) \bigr)_{n=1}^N$ modulo $1$.

\section*{Acknowledgment}
KS is financially supported by JSPS KAKENHI Grant Number JP19J20878.
YY is financially supported by JSPS KAKENHI Grant Number JP19J20161.

\appendix
\section{Optimality of the growth rate $O_{\alpha,k,r}(x^{2-\alpha})$}\label{optimality}

Throughout this appendix, let $f(x)=x^\alpha$.
As stated in Theorem~\ref{main2}, 
the relation $L_{\alpha,k,r}(x)=O_{\alpha,k,r}(x^{2-\alpha})$ holds.
We show that the growth rate $O_{\alpha,k,r}(x^{2-\alpha})$ is best for every $k\ge4$ in the following meaning.

\begin{proposition}\label{prop:k4}
	For all $\alpha\in(1,2)$ and all integers $k\ge4$ and $r\ge1$, 
	\begin{equation}
		\limsup_{x\to\infty} \frac{L_{\alpha,k,r}(x)}{x^{2-\alpha}}
		\ge \frac{k-3}{\alpha(\alpha-1)r(k-1)}.
		\label{bestL}
	\end{equation}
\end{proposition}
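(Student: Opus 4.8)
The plan is to reverse the sandwiching used in the proof of Proposition~\ref{mainprop}: instead of controlling the density of $\cQ$, I would exhibit, along a suitable sequence $N\to\infty$, whole blocks $[N,N+M(N)]_{\mathbb{Z}}$ containing no element of $\cQ$, with $M(N)=\bigl(\tfrac{k-3}{(k-1)r\alpha(\alpha-1)}-o(1)\bigr)N^{2-\alpha}$. Put $f(x)=x^\alpha$ and let $\cQ$ be as in \eqref{eq16}. Fix a small $\gamma>0$. The proof of Proposition~\ref{main0'} establishes (for general $d$; I use $d=1$, $f(x)=x^\alpha$, and recall $a_0(n)=n^\alpha$, $a_1(n)=r\alpha n^{\alpha-1}$) that for any $\epsilon\in(0,1/2)$ there is $x_0$, large enough that $\tfrac{(r(k-1))^2}{2}\alpha(\alpha-1)x^{\alpha-2}\le\epsilon$ for all $x\ge x_0$, such that every integer $n\ge x_0$ with $n\in\cQ$ admits $s\in\mathbb{Z}$ with $(\{n^\alpha\},\{r\alpha n^{\alpha-1}\}+s)\in\cC_{k,2}^{+}(\epsilon)$. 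Reading off the inequality defining $\cC_{k,2}^{+}$ in \eqref{eqC+} at the single index $j=k-1$ gives $-\epsilon\le\{n^\alpha\}+(k-1)\bigl(\{r\alpha n^{\alpha-1}\}+s\bigr)<1+\epsilon$.

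From this I would extract a forbidden arc for $\{r\alpha n^{\alpha-1}\}$. Letting $\{n^\alpha\}$ range over $[0,1)$ and $s$ over $\mathbb{Z}$, the displayed inequality forces $\{r\alpha n^{\alpha-1}\}$ into $\bigl[1-\tfrac{1+\epsilon}{k-1},1\bigr)\cup\bigl[0,\tfrac{1+\epsilon}{k-1}\bigr)$. Choosing $\epsilon$ small enough that $\tfrac{1+\epsilon}{k-1}<\tfrac1{k-1}+\gamma$, and setting $I\coloneqq\bigl[\tfrac1{k-1}+\gamma,\,1-\tfrac1{k-1}-\gamma\bigr]$ --- a nondegenerate interval, of length $\tfrac{k-3}{k-1}-2\gamma>0$, precisely because $k\ge4$ --- it follows that every integer $n\ge x_0$ with $\{r\alpha n^{\alpha-1}\}\in I$ satisfies $n\notin\cQ$.

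It remains to find long blocks on which $\{r\alpha n^{\alpha-1}\}$ stays in $I$. The sequence $(r\alpha n^{\alpha-1})_n$ is increasing, with consecutive gaps $r\alpha\bigl((n+1)^{\alpha-1}-n^{\alpha-1}\bigr)\le r\alpha(\alpha-1)(n-1)^{\alpha-2}\to0$, and $(\{r\alpha n^{\alpha-1}\})_n$ is uniformly distributed modulo $1$ by Proposition~\ref{ud-Hardy} (or by the Corollary following it, since $r$ is an integer). Hence it enters $I$ infinitely often, and at each first-entry time $N$ --- and these are unbounded --- we have $\{r\alpha N^{\alpha-1}\}<\tfrac1{k-1}+\gamma+r\alpha(\alpha-1)(N-1)^{\alpha-2}$, because the predecessor value lies just below $\tfrac1{k-1}+\gamma$. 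For $n\in[N,N+M]_{\mathbb{Z}}$ no wrap-around occurs and $r\alpha n^{\alpha-1}-r\alpha N^{\alpha-1}\le(n-N)\,r\alpha(\alpha-1)(N-1)^{\alpha-2}$, so $\{r\alpha n^{\alpha-1}\}<\tfrac1{k-1}+\gamma+(n-N+1)\,r\alpha(\alpha-1)(N-1)^{\alpha-2}$; this stays $\le1-\tfrac1{k-1}-\gamma$, i.e.\ inside $I$, once $(M+1)\,r\alpha(\alpha-1)(N-1)^{\alpha-2}\le\tfrac{k-3}{k-1}-2\gamma$. Taking the largest such $M=M(N)$ yields $M(N)\ge\bigl(\tfrac{k-3}{k-1}-2\gamma\bigr)(r\alpha(\alpha-1))^{-1}N^{2-\alpha}(1-o(1))$ and $[N,N+M(N)]_{\mathbb{Z}}\cap\cQ=\emptyset$, whence $L_{\alpha,k,r}(N)\ge M(N)$. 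Therefore $\limsup_{x\to\infty}L_{\alpha,k,r}(x)/x^{2-\alpha}\ge\bigl(\tfrac{k-3}{k-1}-2\gamma\bigr)(r\alpha(\alpha-1))^{-1}$, and letting $\gamma\downarrow0$ gives \eqref{bestL}.

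The structural ingredients --- the inclusion relation from the proof of Proposition~\ref{main0'} and the equidistribution of $(r\alpha n^{\alpha-1})_n$ --- are already available, so the only genuine work I expect is bookkeeping: checking that the perturbation $\epsilon$, the offset $r\alpha(\alpha-1)(N-1)^{\alpha-2}$ at the first-entry time, and the variation of the gaps $r\alpha(\alpha-1)\xi^{\alpha-2}$ across a block of length $M(N)$ are all $o(N^{2-\alpha})$, so that they do not erode the constant $\tfrac{k-3}{\alpha(\alpha-1)r(k-1)}$. (This also explains why the argument collapses when $k=3$: then $I$ degenerates to a point.)
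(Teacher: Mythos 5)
Your proposal is correct and is essentially the paper's own argument in different packaging: the paper also extracts from the $\cC_{k,2}^{+}(\epsilon)$ necessary condition (with $s_1\in\{0,-1\}$) that $\{r\alpha n^{\alpha-1}\}$ must lie within $\tfrac{1+\epsilon}{k-1}$ of $0$ or $1$ whenever $n\in\cQ$, uses equidistribution of $(rf'(n))_n$ to place $\{rf'(N)\}$ just above $\tfrac{1}{k-1}$, and then bounds via the mean value theorem how many steps are needed for $\{rf'(n)\}$ to traverse the arc of length $\approx\tfrac{k-3}{k-1}$. Your ``forbidden arc plus first-entry time'' formulation and the paper's two-case analysis on $s_1$ for the minimal $m$ with $N+m\in\cQ$ are the same computation, yielding the same constant.
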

\begin{proof}
	Let $k\ge4$ and $r\ge1$ be integers, 
	and let $\alpha\in(1,2)$ and $\beta\in(0,k-3)$.
	Since $(rf'(n))_{n=1}^\infty$ is uniformly distributed modulo $1$ and 
	the inequality $1/(k-1) < 1 - (\beta+1)/(k-1)$ holds, 
	there exist infinitely many $N\in\mathbb{N}$ such that 
	\begin{equation}\label{eqapp1}
		\frac{1}{k-1} \le \{rf'(N)\} \le 1 - \frac{\beta+1}{k-1}.
	\end{equation}
	Take a sufficiently large $N\in\mathbb{N}$ that satisfies \eqref{eqapp1} and 
	\begin{equation}
		\epsilon = \epsilon(N) \coloneqq \frac{(k-1)^2r^2}{2}f''(N) \in (0,1). \label{eqapp2}
	\end{equation}
	Now, define the set $\cQ$ as \eqref{eq16}, 
	and take $m\in[0,L_{\alpha,k,r}(N)]_{\mathbb{Z}}$ such that $N+m\in\cQ$.
	Recall the proof of Proposition~\ref{main0'}.
	The set 
	\begin{equation}
		\cC_{k,2}^{+}(\epsilon) = \{ (y_0,y_1)\in\mathbb{R}^2 : 0\le y_0<1,\ -\epsilon\le y_0+(k-1)y_1<1 \}
		\label{eq20}
	\end{equation}
	satisfies the inclusion relation \eqref{eq12}, 
	where $a_0(n)$ and $a_1(n)$ are defined by \eqref{eq17} with $d=1$.
	Note that $a_0(n)=f(n)$ and $a_1(n)=rf'(n)$.
	Due to \eqref{eq12}, 
	the vector $(\{f(N+m)\}, \{rf'(N+m)\}+s_1)$ lies in $\cC_{k,2}^{+}(\epsilon)$ for some $s_1\in\mathbb{Z}$.
	The integer $s_1$ is equal to $0$ or $-1$, 
	which is proved at the end of this proof.
	\par
	If $s_1=-1$, then the inequalities $-\epsilon\le \{f(N+m)\}+(k-1)(\{rf'(N+m)\}-1)<1$ and \eqref{eqapp1} 
	and the mean value theorem imply that 
	\begin{align*}
		1-\frac{1+\epsilon}{k-1} &\le \{rf'(N+m)\}
		\le \{rf'(N)\} + rmf''(N)\\
		&\le 1-\frac{\beta+1}{k-1} + rL_{\alpha,k,r}(N)f''(N),
	\end{align*}
	whence $L_{\alpha,k,r}(N)f''(N) \ge (\beta-\epsilon)/r(k-1)$.
	If $s_1=0$, then the inequalities $-\epsilon\le \{f(N+m)\}+(k-1)\{rf'(N+m)\}<1$ and \eqref{eqapp1} yield 
	\[
	\{rf'(N+m)\} < \frac{1}{k-1} \le \{rf'(N)\} \le 1-\frac{\beta+1}{k-1}.
	\]
	Since $f'$ and $f''$ are increasing and decreasing functions respectively, 
	the mean value theorem implies that 
	\[
	\frac{\beta+1}{k-1} \le rf'(N+m) - rf'(N)
	\le rmf''(N) \le rL_{\alpha,k,r}(N)f''(N),
	\]
	whence $L_{\alpha,k,r}(N)f''(N) \ge \beta/r(k-1)$.
	Since $\epsilon=\epsilon(N)$ vanishes as $N\to\infty$, 
	it turns out that 
	\[
	\limsup_{x\to\infty} \frac{L_{\alpha,k,r}(x)}{x^{2-\alpha}} \ge \frac{\beta}{\alpha(\alpha-1)r(k-1)}.
	\]
	Letting $\beta\to k-3$, we obtain \eqref{bestL}.
	\par
	We show that if $(x_0,x_1+s_1)\in\cC_{k,2}^{+}(\epsilon)$, 
	$(x_0,x_1)\in[0,1)^2$ and $s_1\in\mathbb{Z}$, then $s_1\in\{0,-1\}$.
	(The assumption $k\ge3$ suffices here.)
	The definition of $\cC_{k,2}^{+}(\epsilon)$ yields that 
	\begin{gather*}
		(k-1)s_1 \le x_0+(k-1)(x_1+s_1) < 1+\epsilon < 2,\\
		0 \le x_0+(k-1)(x_1+s_1) < 1+(k-1)(1+s_1),
	\end{gather*}
	whence $-3/2\le-k/(k-1)<s_1<2/(k-1)\le1$.
	Therefore, the integer $s_1$ is equal to $0$ or $-1$.
\end{proof}

When $k=3$, the above proof does not work well, 
since there does not exist $N\in\mathbb{N}$ satisfying \eqref{eqapp1}.
The relation $L_{\alpha,3,r}(x)=O_{\alpha,r}(x^{1-\alpha/2})$ probably holds, 
but we do not have its proof.
However, if $L_{\alpha,3,r}(x)=O_{\alpha,r}(x^{1-\alpha/2})$ holds, 
then the growth rate $O_{\alpha,r}(x^{1-\alpha/2})$ is best in the following meaning.

\begin{proposition}\label{prop:k3}
	For all $\alpha\in(1,2)$ and $r\ge\mathbb{N}$, 
	\[
	\limsup_{x\to\infty} \frac{L_{\alpha,3,r}(x)}{x^{1-\alpha/2}}
	\ge \frac{\sqrt{2}-1}{\sqrt{\alpha(\alpha-1)r}}.
	\]
\end{proposition}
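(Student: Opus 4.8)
The plan is to run the argument of Proposition~\ref{prop:k4} with $k=3$, where the linear term $\{rf'(n)\}$ no longer suffices and one must exploit the quadratic Taylor term of $f(x)=x^{\alpha}$. Write $\cQ$ for the set in \eqref{eq16}, $f(x)=x^{\alpha}$, and $\epsilon(n)=2r^{2}f''(n)$. Recall from the proof of Proposition~\ref{main0'} (specialised to $d=1$, $k=3$, as used again in the proof of Proposition~\ref{prop:k4}) that every $n\in\cQ$ satisfies $\bigl(\{f(n)\},\{rf'(n)\}+s_{1}\bigr)\in\cC_{3,2}^{+}(\epsilon(n))$ for some $s_{1}\in\{0,-1\}$; in particular $n\in\cQ$ forces $\{f(n)\}+2\{rf'(n)\}\in[0,1)\cup[2-\epsilon(n),3)$. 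I will construct a sequence $N\to\infty$ such that $[N,N+M_{N})$ contains no element of $\cQ$, with $M_{N}=(1-o(1))\bigl(rf''(N)\bigr)^{-1/2}$; since $f''(N)=\alpha(\alpha-1)N^{\alpha-2}$ this gives $L_{\alpha,3,r}(N)\ge M_{N}=(1-o(1))\,N^{1-\alpha/2}/\sqrt{\alpha(\alpha-1)r}$, which already exceeds the asserted bound $(\sqrt{2}-1)/\sqrt{\alpha(\alpha-1)r}$. (The hypothesis $L_{\alpha,3,r}(x)=O_{\alpha,r}(x^{1-\alpha/2})$ is what makes such a lower bound meaningful, i.e.\ an optimality statement; the estimate itself is self-contained on the window in question.)

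The first step is the choice of $N$. By the uniform distribution modulo $1$ of $\bigl((f(n),f'(n))\bigr)_{n\ge1}$ --- valid for $f(x)=x^{\alpha}$, $\alpha\in(1,2)$ --- pick, for a fixed small $\delta>0$, points $N$ with $\{f(N)\}<\delta/(2r)$ and with $\{f'(N)\}\in\bigl[\tfrac1{2r},\tfrac1{2r}+\delta_{N}\bigr)$ where $\delta_{N}=o\bigl((rf''(N))^{1/2}\bigr)$; for each fixed tolerance such $N$ form a set of positive density, so one can let the tolerance shrink and still take $N$ as large as needed. Then $\{rf'(N)\}\in[\tfrac12,1)$, and since $rf'$ is increasing with $0<f''\to0$, the increment $rmf''(\xi)$ over $0\le m\le (rf''(N))^{-1/2}$ is $o(1)$, whence $\{rf'(N+m)\}\ge\tfrac12$ throughout. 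Consequently $s_{1}=0$ is impossible in the membership criterion (it would require $\{f(N+m)\}+2\{rf'(N+m)\}<1$), so $N+m\in\cQ$ forces $\{f(N+m)\}+2\{rf'(N+m)\}\ge2-\epsilon(N+m)$, i.e.\ $\{f(N+m)\}\ge1-o(1)$.

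On the other hand, Taylor's theorem and $\{f'(N)\}=\tfrac1{2r}+o\bigl((rf''(N))^{1/2}\bigr)$ give, uniformly for $0\le m\le (rf''(N))^{-1/2}$,
\[
\{f(N+m)\}=\Bigl\{\,\{f(N)\}+\bigl(\tfrac{m}{2r}\bmod 1\bigr)+\tfrac{m^{2}}{2}f''(N)+o(1)\Bigr\},
\]
the $o(1)$ absorbing the cubic remainder $\tfrac{m^{3}}{6}f'''$ (which is $\ll N^{-\alpha/2}$ there) and $m\bigl(\{f'(N)\}-\tfrac1{2r}\bigr)$. The number $\tfrac{m}{2r}\bmod 1$ is a multiple of $\tfrac1{2r}$ in $[0,1)$, hence at most $1-\tfrac1{2r}$; so whenever $m<(1-\delta)(rf''(N))^{-1/2}$ one has $\tfrac{m^{2}}{2}f''(N)<\tfrac{(1-\delta)^{2}}{2r}$ and therefore
\[
\{f(N)\}+\bigl(\tfrac{m}{2r}\bmod1\bigr)+\tfrac{m^{2}}{2}f''(N)<\frac{\delta}{2r}+1-\frac1{2r}+\frac{(1-\delta)^{2}}{2r}=1-\frac{\delta-\delta^{2}}{2r},
\]
which is $<1-c$ for a fixed $c=c(\delta,r)>0$. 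Hence the bracketed quantity lies in $[0,1)$, the right-hand side of the displayed identity is $<1-o(1)$, the necessary condition $\{f(N+m)\}\ge1-o(1)$ fails, and $N+m\notin\cQ$. Thus $M_{N}=(1-\delta)(rf''(N))^{-1/2}$ works; taking $\limsup$ over the chosen $N$ and then letting $\delta\downarrow0$ gives the proposition.

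The main obstacle is the joint choice of $N$: one needs $\{f'(N)\}$ to approach $\tfrac1{2r}$ at a rate $o\bigl((rf''(N))^{1/2}\bigr)=o\bigl(N^{(\alpha-2)/2}\bigr)$, so that $m\{f'(N)\}\bmod1$ stays within $o(1)$ of $\tfrac{m}{2r}\bmod1$ across the entire window $0\le m\le (rf''(N))^{-1/2}$, while at the same time keeping $\{f(N)\}$ small --- and the two-dimensional equidistribution statement is only qualitative, so this has to be arranged along a sequence of shrinking tolerances with an explicit verification that admissible $N$ occur arbitrarily far out. The remaining care is routine bookkeeping of the $o(1)$ remainders on the window (the cubic Taylor term, the deviation of $f''$, and the small drift of $\{rf'(N+m)\}$). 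A coarser variant not controlling $\{f(N)\}$, using only $\{rf'(N)\}\approx\tfrac12$ together with the slow growth of $\{rf'(N+m)\}$, already produces a $\cQ$-free window of length $\gg N^{1-\alpha/2}$ and in particular yields the stated constant $(\sqrt{2}-1)/\sqrt{\alpha(\alpha-1)r}$.
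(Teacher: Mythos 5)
Your reduction of the problem is the right one (restrict to a window $[N,N+M_N)$, use the membership criterion $(\{f(n)\},\{rf'(n)\}+s_1)\in\cC_{3,2}^{+}(\epsilon)$ with $s_1\in\{0,-1\}$, rule out $s_1=0$ via $\{rf'(N+m)\}\ge 1/2$, and then show $\{f(N+m)\}$ cannot reach $1-o(1)$ on the window), and your Taylor bookkeeping on the window is correct. But there is a genuine gap at the very first step: the existence of the special $N$. You need infinitely many $N$ with $\{f'(N)\}-\tfrac{1}{2r}\in[0,\delta_N)$ where $\delta_N=o\bigl((rf''(N))^{1/2}\bigr)=o(N^{(\alpha-2)/2})$, \emph{simultaneously} with $\{f(N)\}<\delta/(2r)$. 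This is a shrinking-target condition, and equidistribution of $\bigl((f(n),f'(n))\bigr)_n$ does not deliver it: for each \emph{fixed} tolerance $\eta$ you get a positive-density set of admissible $N$, and a diagonal argument over $\eta\to0$ only produces a sequence $N_j$ with $\{f'(N_j)\}-\tfrac1{2r}=o(1)$, with no control of the rate relative to $N_j^{(\alpha-2)/2}$ (the $N_j$ furnished by equidistribution at tolerance $1/j$ may be so large that $N_j^{(\alpha-2)/2}\ll 1/j$). You flag this difficulty yourself in your closing paragraph, but you do not resolve it, and it is the entire content of the paper's Lemmas \ref{lem:lb1} and \ref{lem:lb2}: an explicit construction (shifting along the arithmetic progression $n_m=2rm-s$ and tracking the drift of $f$ and $f'$) which achieves the joint condition only with tolerances $\{f(N)\}<c_0f''(N)^{1/2}$, $\{f'(N)\}-\tfrac1{2r}<c_1f''(N)^{1/2}$ for constants bounded below by $c_0>2r^{1/2}$, $c_1>r^{-1/2}$. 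The constant $\sqrt2-1$ in the proposition comes precisely from optimizing the admissible window length $c_2<\sqrt{c_1^2+1/r}-c_1$ against these floors; your idealized choice of $N$ (tolerance $o(f''(N)^{1/2})$) would yield the constant $1$ in place of $\sqrt2-1$, a strictly stronger statement than the paper proves --- a strong indication that the hard part has been assumed rather than established.

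Two further points. First, your closing claim that a ``coarser variant not controlling $\{f(N)\}$'' already gives the stated constant is false: without smallness of $\{f(N)\}$ at the left endpoint, the window of length $\asymp N^{1-\alpha/2}$ can contain (indeed begin with) integers $m$ for which $\{f(N+m)\}\ge 1-o(1)$, and then $s_1=-1$ is admissible and $N+m$ may lie in $\cQ$; controlling $\{f(N)\}$ is exactly what forces $\{f(N)\}+(\tfrac{m}{2r}\bmod1)+\tfrac{m^2}{2}f''(N)$ to stay below $1-c$ throughout. Second, even granting the paper's weaker tolerances $c_0f''(N)^{1/2}$, $c_1f''(N)^{1/2}$, your estimate of $\{f(N+m)\}$ must carry the extra terms $m(\{f'(N)\}-\tfrac1{2r})\le c_1c_2$ and $\{f(N)\}\le c_0f''(N)^{1/2}$, which is where the constraint $1/2r-c_1c_2-c_2^2/2>0$ (hence $c_2<\sqrt{c_1^2+1/r}-c_1$) arises; these are not $o(1)$ and cannot be absorbed into the error terms as your write-up does.
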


To prove Proposition~\ref{prop:k3}, 
we need to choose infinitely many $N\in\mathbb{N}$ with certain properties instead of \eqref{bestL}.
For this purpose, let us show the following lemmas.

\begin{lemma}\label{lem:lb1}
	Let $\alpha\in(1,2)$ and $r\in\mathbb{N}$.
	Then there exist infinitely many $N\in\mathbb{N}$ such that 
	$0\le\{f'(N)\}-1/2r<f''(N-1)$.
\end{lemma}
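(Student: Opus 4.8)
The plan is to prove this by a soft monotonicity argument, without invoking any equidistribution: we only use that $f'(x)=\alpha x^{\alpha-1}$ is strictly increasing with $f'(x)\to\infty$, and that $f''(x)=\alpha(\alpha-1)x^{\alpha-2}$ is strictly decreasing, positive, and tends to $0$. The one elementary ingredient is that, by the mean value theorem, for every integer $n\ge2$ there is $\xi\in(n-1,n)$ with $f'(n)-f'(n-1)=f''(\xi)$, and since $f''$ is strictly decreasing this gives $0<f'(n)-f'(n-1)<f''(n-1)$. In words: consecutive values of $f'$ climb upward in steps that shrink to $0$.

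Granting this, for each large integer $m$ I would let $N=N(m)$ be the smallest integer with $f'(N)\ge m+\tfrac1{2r}$; such $N$ exists because $f'\to\infty$, and for $m$ large we have $N\ge2$. By minimality of $N$, $f'(N-1)<m+\tfrac1{2r}$, hence $f'(N)<f'(N-1)+f''(N-1)<m+\tfrac1{2r}+f''(N-1)$. Since $f''(N-1)\to0$ as $m\to\infty$, we may assume $m$ is large enough that $f''(N-1)<1-\tfrac1{2r}$, so that $f'(N)\in[m+\tfrac1{2r},m+1)\subset[m,m+1)$. Consequently $\lfloor f'(N)\rfloor=m$ and $\{f'(N)\}=f'(N)-m$. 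Moreover distinct values of $m$ produce distinct values of $N$ (as $\lfloor f'(N(m))\rfloor=m$), so this yields infinitely many $N$.

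It then remains to read off the two inequalities. From $f'(N)\ge m+\tfrac1{2r}$ we get $\{f'(N)\}-\tfrac1{2r}=f'(N)-m-\tfrac1{2r}\ge0$, which is the left inequality. For the right inequality, decompose
\[
\{f'(N)\}-\tfrac1{2r}=\bigl(f'(N)-f'(N-1)\bigr)+\bigl(f'(N-1)-m-\tfrac1{2r}\bigr);
\]
the first bracket is $<f''(N-1)$ by the mean value estimate above, and the second bracket is $<0$ by minimality of $N$, so $\{f'(N)\}-\tfrac1{2r}<f''(N-1)$, as desired.

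I do not expect a genuine obstacle: the only point requiring care is the bookkeeping that forces $f'(N)$ to land inside the integer window $[m,m+1)$, so that its floor is exactly $m$ and $\{f'(N)\}=f'(N)-m$; this is precisely where the largeness of $m$ — equivalently the decay $f''(N-1)<1-\tfrac1{2r}$ — is used. Everything else follows immediately from the monotonicity of $f'$, $f''$ and a single application of the mean value theorem.
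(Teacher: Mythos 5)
Your proof is correct and is essentially the same argument as the paper's: both locate the first integer $N$ at which $f'(N)$ crosses a level $m+\tfrac1{2r}$ from below and bound the overshoot by the mean-value increment $f'(N)-f'(N-1)<f''(N-1)$. The only difference is cosmetic — the paper starts from a large window $[N,2N]$ and finds a suitable $m$ inside it, whereas you parametrize directly by $m$ and take the first crossing.
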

\begin{proof}
	Take an arbitrary $N\in\mathbb{N}$ such that $f''(N)<1/2r$ and $Nf''(2N)>1$.
	Since the inequality $f'(2N)-f'(N)>Nf''(2N)>1$ holds, 
	some $m\in\mathbb{Z}$ satisfies $f''(N) < 1/2r+m < f''(2N)$.
	Also, the sequence $(f'(N+n))_{n=0}^N$ increases and 
	the difference $f'(N+n+1)-f'(N+n)$ is bounded above by $f''(N)<1/2r$.
	Thus, we can take the minimum $n\in[1,N]_{\mathbb{Z}}$ such that $f'(N+n-1)<1/2r+m\le f'(N+n)<1+m$.
	Then it follows that 
	\[
	0 \le \{f'(N+n)\}-1/2r < f'(N+n)-f'(N+n-1) < f''(N+n-1).
	\]
	The arbitrariness of $N$ implies Lemma~\ref{lem:lb1}.
\end{proof}

\begin{lemma}\label{lem:lb2}
	Let $\alpha\in(1,2)$ and $r\in\mathbb{N}$.
	For all $c_0>2r^{1/2}$ and $c_1>r^{-1/2}$, 
	there exist infinitely many $N\in\mathbb{N}$ such that 
	$\{f(N)\}<c_1f''(N)^{1/2}$ and $0\le\{f'(N)\}-1/2r<c_0f''(N)^{1/2}$.
\end{lemma}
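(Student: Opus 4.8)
The plan is to work inside a dyadic block $[M,2M]$ with $M\to\infty$, first to isolate those $n$ for which the condition on $f'$ holds, and then to show that among them one always has $\{f(n)\}$ small. Write $g(x)=f''(x)=\alpha(\alpha-1)x^{\alpha-2}$, so that on $[M,2M]$ one has $g(n)=g(M)(1+O(1/M))$, $g(n)^{1/2}\asymp M^{(\alpha-2)/2}$, and $f'''$, $f''''$ are of size $O(g/M)$. Since $f'$ is increasing with $f'(n+1)-f'(n)=g(n)+O(g(n)/M)$, the set $\mathcal{S}=\{n\in[M,2M]:0\le\{f'(n)\}-1/2r<c_0f''(n)^{1/2}\}$ is a disjoint union of $\asymp M^{\alpha-1}$ ``runs'': one for each integer $m$ with $m+1/2r$ crossed by $f'$ on $[M,2M]$, the $k$-th run being $[M_k,M_k+L_k]$, where $M_k$ is the first integer with $f'(M_k)\ge m_k+1/2r$ and $L_k\ge\lfloor c_0^{-}g(M_k)^{-1/2}\rfloor$ once $M$ is large, for any fixed $c_0^{-}\in(2\sqrt r,c_0)$. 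On the $k$-th run Taylor's theorem gives, uniformly for $0\le j\le L_k$,
\[
\{f(M_k+j)\}=\bigl\{A_k+B_kj+Cj^2\bigr\}+O(M^{-\alpha/2}),\qquad A_k=\{f(M_k)\},\ B_k=\{f'(M_k)\},\ C=\frac{1}{2} g(M_k),
\]
with $B_k\in[1/2r,1/2r+O(g(M_k)))$; note that $CL_k^2\asymp (c_0^{-})^2/2>2$. Thus on each run the relevant fractional parts trace the values of a quadratic polynomial in $j$, and the hypotheses translate into the numerical input $c_0^{-}c_1^{-}>2$ for a suitable $c_1^{-}\in(1/\sqrt r,c_1)$.

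It now suffices to find, for arbitrarily large $M$, a run $k$ and a $j\le L_k$ with $\{A_k+B_kj+Cj^2\}<c_1^{-}g(M_k)^{1/2}$: then $\{f(M_k+j)\}<c_1f''(M_k+j)^{1/2}$ and $M_k+j\in\mathcal{S}$, so $N=M_k+j$ works. A single run will not do, because there the quadratic exponential sum $\sum_{j\le L_k}e\bigl(h(A_k+B_kj+Cj^2)\bigr)$ is of Gauss type of size $\asymp L_k$ (there is no cancellation since $CL_k^2=O(1)$), so the values on one run are concentrated rather than equidistributed. Instead I would count $\#\{n\in\mathcal{S}:\{f(n)\}<c_1^{-}f''(n)^{1/2}\}$ by expanding the indicator in a Fourier series and using the Erd\H{o}s--Tur\'an--Koksma inequality (Lemma~\ref{ETK}) for the finite sequence $\bigl((f(n),f'(n))\bigr)_{n\in\mathcal{S}}$, thereby reducing matters to bounding, for each fixed nonzero $h\in\mathbb{Z}$, the double sum $\sum_{n\in\mathcal{S}}e(hf(n))=\sum_k\sum_{j\le L_k}e\bigl(hf(M_k+j)\bigr)$. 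One evaluates the inner sum by completion / stationary phase (van der Corput's $B$-process, comparing with $\int e(hCt^2)\,dt$), obtaining $\asymp g^{-1/2}$ times a unimodular factor $\Phi_k=\Phi_k(A_k,B_k,h)$, and then exploits cancellation in the outer sum $\sum_k\Phi_k$ coming from the fact that $B_k=\{f'(M_k)\}$ moves with $M_k$ while $A_k=\{f(M_k)\}$ is uniformly distributed along the sparse sequence of run-starts — a consequence of the uniform distribution of $\bigl((f(n),f'(n))\bigr)_n$ for $f(x)=x^\alpha$ (Section~\ref{uni-dist}) together with a Lemma~\ref{lem:lb1}-style description of the $M_k$. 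Granting that cancellation, the count is positive (of order $M^{\alpha-1}$) for all large $M$, which yields infinitely many admissible $N$.

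The crux is this last exponential-sum estimate. The target box $[0,c_1f''(N)^{1/2})\times[1/2r,1/2r+c_0f''(N)^{1/2})$ has Lebesgue measure of order $f''(N)\asymp N^{\alpha-2}$, which is smaller than the discrepancy of $\bigl((f(n),f'(n))\bigr)_{n\le N}$ on a dyadic block (that discrepancy is only $\ll N^{(\alpha-2)/3}\log N$ by Lemma~\ref{mainlemma}); hence a black-box appeal to discrepancy bounds fails, and bounding $\sum_k\Phi_k$ by the triangle inequality is also too lossy. One genuinely has to use the two-scale structure — a short scale $N^{(2-\alpha)/2}$ inside each run, on which the sequence is a concentrated quadratic, and a long scale $N^{\alpha-1}$ counting the runs, on which the parameters $A_k,B_k$ decorrelate — and prove honest (for $\alpha$ close to $1$, better than square-root) cancellation in $\sum_k\Phi_k$. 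Quantifying that decorrelation is the delicate step of the whole argument.
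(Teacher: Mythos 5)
Your proposal has a genuine gap at exactly the point you flag yourself: the claimed cancellation in $\sum_k\Phi_k$ is never proved, only postulated. As you correctly observe, the target region has measure $\asymp f''(N)\asymp N^{\alpha-2}$, which is far below the discrepancy $\ll N^{(\alpha-2)/3}\log N$ available from Lemma~\ref{mainlemma}, so no black-box equidistribution argument applies; what you would need is an asymptotic count of visits to a box of side $\asymp N^{(\alpha-2)/2}$, i.e.\ equidistribution of $\bigl((f(n),f'(n))\bigr)$ at a scale three times finer (in the exponent) than anything established in the paper. That is a substantially harder exponential-sum problem than the lemma itself, and ``granting that cancellation'' is not a proof. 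Moreover, the whole counting framework is unnecessary: the lemma only asserts \emph{existence} of infinitely many good $N$, not a positive-density count.

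The paper's proof is elementary and avoids exponential sums entirely. It seeds the argument with Lemma~\ref{lem:lb1}, which supplies $N$ with $0\le\{f'(N)\}-1/2r<f''(N-1)$ --- note this is within $O(f''(N))$ of $1/2r$, much tighter than the $O(f''(N)^{1/2})$ window you work with. It then samples $f$ along the arithmetic progression $N+n_m$ with $n_m=2rm-s$ (the step $2r$ chosen so that $2r\cdot(1/2r)=1$ is an integer, and $s$ chosen to normalize $\{f(N)\}$). The quantity $x_m=f(N+n_m)-m-n_m\lfloor f'(N)\rfloor$ then increases monotonically with each increment bounded by $O(rf''(N-1)+r^2Mf''(N))$, while the total increase over $M\asymp f''(N)^{-1/2}$ steps exceeds $1/2r$ by the second-order Taylor term $\asymp r^2M^2f''(N)$. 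A discrete intermediate-value argument --- take the first $m$ with $x_{m+1}\ge\lfloor f(N)\rfloor$ --- pins $\{f(N+n_{m+1})\}$ below one step size, i.e.\ below $\approx 2r^{1/2}\beta f''{}^{1/2}<c_0f''{}^{1/2}$, while $\{f'(N+n_{m+1})\}$ has drifted from $1/2r$ by at most $n_{m+1}f''(N)\approx r^{-1/2}\beta f''{}^{1/2}<c_1f''{}^{1/2}$. (The roles of $c_0$ and $c_1$ in your write-up are also swapped relative to the statement: $c_0$ bounds the $\{f(N)\}$ condition and $c_1$ the $\{f'(N)\}$ condition in the paper's normalization.) The idea your approach is missing is precisely this single-orbit drift-and-crossing mechanism, which converts the problem from a counting statement at an inaccessible scale into a pigeonhole statement along one carefully chosen progression.
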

\begin{proof}
	Let $c_0>2r^{1/2}$ and $c_1>r^{-1/2}$.
	Take a sufficiently large $N\in\mathbb{N}$ such that 
	$0\le\{f'(N)\}-1/2r<f''(N-1)$ (see Lemma~\ref{lem:lb1}).
	Also, take $s\in[1,2r]_{\mathbb{Z}}$ such that $-1/2r<\{f(N)\}-s/2r\le0$.
	Defining $n_m=2rm-s$ and $x_m=f(N+n_m)-m-n_m\lfloor{f'(N)}\rfloor$ for $m\in[1,M+1]_{\mathbb{Z}}$, 
	we verify the following facts.
	\begin{enumerate}
		\item
		$0 < x_{m+1}-x_m < 2rf''(N-1) + 4r^2(M+1)f''(N)$ for all $m\in[1,M]_{\mathbb{Z}}$.
		\item
		$x_{M+1}-x_1 > 2r^2M^2f''(N+2r(M+1)) - 2r^2f''(N)$.
		\item
		$-1/2r < x_1 - \lfloor{f(N)}\rfloor < 2rf''(N-1) + 2r^2f''(N)$.
	\end{enumerate}
	Fact~(1): 
	\begin{align*}
		x_{m+1}-x_m
		&> 2rf'(N) - 1 - 2r\lfloor{f'(N)}\rfloor
		= 2r\{f'(N)\} - 1 \ge 0,\\
		x_{m+1} - x_m
		&< 2rf'(N+n_{m+1}) - 1 - 2r\lfloor{f'(N)}\rfloor\\
		&< 2r(f'(N) + n_{m+1}f''(N)) - 1 - 2r\lfloor{f'(N)}\rfloor\\
		&< 2r\{f'(N)\} - 1 + 2rn_{m+1}f''(N)\\
		&< 2rf''(N-1) + 4r^2(M+1)f''(N).
	\end{align*}
	Fact~(2): 
	\begin{align*}
		x_{M+1}-x_1
		&> f(N+n_{M+1}) - f(N+n_1) - M - 2rM\lfloor{f'(N)}\rfloor\\
		&= f(N+n_{M+1}) - f(N+n_1) - 2rMf'(N) + M(2r\{f'(N)\}-1)\\
		&\ge f(N+n_{M+1}) - f(N+n_1) - 2rMf'(N)\\
		&> \Bigl( f(N) + n_{M+1}f'(N) + \frac{n_{M+1}^2}{2}f''(N+n_{M+1}) \Bigr)\\
		&\quad- \Bigl( f(N) + n_1f'(N) + \frac{n_1^2}{2}f''(N) \Bigr) - 2rMf'(N)\\
		&= \frac{n_{M+1}^2}{2}f''(N+n_{M+1}) - \frac{n_1^2}{2}f''(N)\\
		&> 2r^2M^2f''(N+2r(M+1)) - 2r^2f''(N).
	\end{align*}
	Fact~(3): 
	\begin{align*}
		x_1 - \lfloor{f(N)}\rfloor
		&= f(N+n_1) - 1 - n_1\lfloor{f'(N)}\rfloor - \lfloor{f(N)}\rfloor\\
		&> f(N) + n_1f'(N) - 1 - n_1\lfloor{f'(N)}\rfloor - \lfloor{f(N)}\rfloor\\
		&= \{f(N)\} + n_1\{f'(N)\} - 1\\
		&\ge \{f(N)\} + n_1/2r - 1
		= \{f(N)\} - s/2r > -1/2r
	\end{align*}
	and 
	\begin{align*}
		x_1 - \lfloor{f(N)}\rfloor
		&= f(N+n_1) - 1 - n_1\lfloor{f'(N)}\rfloor - \lfloor{f(N)}\rfloor\\
		&< f(N) + n_1f'(N) + \frac{n_1^2}{2}f''(N) - 1 - n_1\lfloor{f'(N)}\rfloor - \lfloor{f(N)}\rfloor\\
		&= \{f(N)\} + n_1\{f'(N)\} + \frac{n_1^2}{2}f''(N) - 1\\
		&< \{f(N)\} + n_1(1/2r + f''(N-1)) + \frac{n_1^2}{2}f''(N) - 1\\
		&< \{f(N)\} - s/2r + n_1f''(N-1) + \frac{n_1^2}{2}f''(N)\\
		&\le n_1f''(N-1) + \frac{n_1^2}{2}f''(N)
		< 2rf''(N-1) + 2r^2f''(N).
	\end{align*}
	\par
	Now, we have the following two cases: 
	\begin{enumerate}
		\item
		$x_1 - \lfloor{f(N)}\rfloor\ge0$,
		\item
		$x_1 - \lfloor{f(N)}\rfloor<0$.
	\end{enumerate}
	Case~(1). The sufficiently large $N$ satisfies that 
	\begin{align*}
		\{f(N+n_1)\} &= \{x_1\} \overset{\text{Fact~(3)}}{<} 2rf''(N-1) + 2r^2f''(N) < c_0f''(N+n_1)^{1/2},\\
		\{f'(N+n_1)\} &< \{f'(N)\} + n_1f''(N)
		< 1/2r + f''(N-1) + 2rf''(N)\\
		&< 1/2r + c_1f''(N+n_1)^{1/2},\\
		\{f'(N+n_1)\} &> \{f'(N)\} \ge 1/2r.
	\end{align*}
	\par\noindent
	Case~(2). Take $1<\beta<\beta'=\min\{c_0/2r^{1/2}, c_1/r^{-1/2}\}$ and 
	put $M=\lceil{\beta f''(N)^{-1/2}/2r^{3/2}}\rceil=O(N^{1-\alpha/2})$.
	Since the sufficiently large $N$ satisfies 
	\begin{align*}
		x_{M+1}-x_1
		&> \frac{f''(N+2r(M+1))}{2rf''(N)} - 2r^2f''(N)\\
		&= \frac{\beta}{2r}\Bigl( \frac{N}{N+2r(M+1)} \Bigr)^{2-\alpha} - 2r^2f''(N)
		> \frac{1}{2r},
	\end{align*}
	we can take the minimum $m\in[1,M]_{\mathbb{Z}}$ such that $x_{m+1}-\lfloor{f(N)}\rfloor\ge0$.
	Then the sufficiently large $N$ satisfies that 
	\begin{align*}
		\{f(N+n_{m+1})\} &= \{x_{m+1}\} < x_{m+1}-x_m
		\overset{\text{Fact~(2)}}{<} 2rf''(N-1) + 4r^2(M+1)f''(N)\\
		&< 2r^{1/2}\beta'f''(N+n_{m+1})^{1/2}
		\le c_0f''(N+n_{m+1})^{1/2}
	\end{align*}
	and 
	\begin{align*}
		\{f'(N+n_{m+1})\} &< \{f'(N)\} + n_{m+1}f''(N)
		< 1/2r + f''(N-1) + 2r(M+1)f''(N)\\
		&< 1/2r + r^{-1/2}\beta'f''(N+n_{m+1})^{1/2}
		\le 1/2r + c_1f''(N+n_{m+1})^{1/2},\\
		\{f'(N+n_{m+1})\} &> \{f'(N)\} \ge 1/2r.
	\end{align*}
	Therefore, Lemma~\ref{lem:lb2} holds.
\end{proof}

\begin{proof}[Proof of Proposition~\upshape{\ref{prop:k3}}]
	Let $c_0>2r^{1/2}$, $c_1>r^{-1/2}$ and $0<c_2<\sqrt{c_1^2+1/r}-c_1$.
	Thanks to Lemma~\ref{lem:lb2}, 
	we can take a sufficiently large $N\in\mathbb{N}$ such that 
	\begin{enumerate}
		\item
		$\{f(N)\}<c_0f''(N)^{1/2}$,
		\item
		$0\le\{f'(N)\}-1/2r<c_1f''(N)^{1/2}$,
		\item
		$rc_1f''(N)^{1/2}<1/2$.
	\end{enumerate}
	Moreover, the inequality 
	\begin{enumerate}
		\addtocounter{enumi}{3}
		\item
		$0\le\{rf'(N)\}-1/2<rc_1f''(N)^{1/2}$
	\end{enumerate}
	follows from (2) and (3).
	Set $\epsilon=\epsilon(N)=2r^2f''(N)\in(0,1)$, 
	which is just \eqref{eqapp2} with $k=3$.
	We show that $L_{\alpha,3,r}(N) > c_2f''(N)^{-1/2}$ by contradiction.
	Suppose that $L_{\alpha,3,r}(N) \le c_2f''(N)^{-1/2}$.
	Take $m\in[0,L_{\alpha,3,r}(N)]_{\mathbb{Z}}$ such that 
	$(\lfloor{f(N+m+rj)}\rfloor)_{j=0}^2$ is an AP.
	Since the set $\cC_{k,2}^{+}(\epsilon)$ defined by \eqref{eq20} 
	satisfies the inclusion relation \eqref{eq12}, 
	the vector $(\{f(N+m)\}, \{rf'(N+m)\}+s_1)$ lies in $\cC_{k,2}^{+}(\epsilon)$ for some $s_1\in\mathbb{Z}$.
	The integer $s_1$ is equal to $0$ or $-1$ (see the end of the proof of Proposition~\ref{prop:k4}).
	\par
	If $s_1=0$, then the inequalities $-\epsilon\le \{f(N+m)\}+2\{rf'(N+m)\}<1$, 
	$m \le L_{\alpha,3,r}(N) \le c_2f''(N)^{-1/2}$ and (4) yield that 
	\[
	\{rf'(N+m)\} < 1/2 \le \{rf'(N)\} < 1/2 + rc_1f''(N)^{1/2}
	\]
	and thus 
	\[
	1/2 - rc_1f''(N)^{1/2} < rf'(N+m) - rf'(N)
	\le rmf''(N) \le rc_2f''(N)^{1/2},
	\]
	which is a contradiction because $N$ is sufficiently large.
	\par
	Next, consider the case $s_1=-1$.
	Then the inequalities $-\epsilon\le \{f(N+m)\}+2(\{rf'(N+m)\}-1)<1$, 
	$m \le L_{\alpha,3,r}(N) \le c_2f''(N)^{-1/2}$ and (4) yield that 
	\begin{align*}
		1-\frac{\{f(N+m)\}+\epsilon}{2} &\le \{rf'(N+m)\}
		\le \{rf'(N)\} + rmf''(N)\\
		&< 1/2 + rc_1f''(N)^{1/2} + rc_2f''(N)^{1/2},
	\end{align*}
	whence 
	\begin{equation}
		\{f(N+m)\} > 1-\epsilon-2r(c_1+c_2)f''(N)^{1/2}.
		\label{eq21}
	\end{equation}
	Since Taylor's theorem implies that 
	\[
	f(N+m) = f(N) + mf'(N) + \frac{m^2}{2}f''(N+\theta)
	\]
	for some $\theta\in[0,m]$, 
	the inequalities (1) and $m \le L_{\alpha,3,r}(N) \le c_2f''(N)^{-1/2}$ yield that 
	\begin{equation}
	\begin{split}
		\{f(N+m)\} &\le \{f(N)\} + \{mf'(N)\} + \frac{m^2}{2}f''(N+\theta)\\
		&< c_0f''(N)^{1/2} + \{mf'(N)\} + c_2^2/2.
	\end{split}\label{eq22}
	\end{equation}
	Also, the inequalities (2) and $m \le L_{\alpha,3,r}(N) \le c_2f''(N)^{-1/2}$ yield that 
	\[
	0 \le m\{f'(N)\}-m/2r < c_1mf''(N)^{1/2} \le c_1c_2,
	\]
	whence 
	\begin{equation}
		\{mf'(N)\} \le \{m/2r\} + c_1c_2 \le 1-1/2r+c_1c_2.
		\label{eq23}
	\end{equation}
	Recall the definition of $\epsilon=\epsilon(N)$.
	Using \eqref{eq21}--\eqref{eq23}, we have 
	\begin{align*}
		&\quad 1-2r^2f''(N)-2r(c_1+c_2)f''(N)^{1/2}
		< \{f(N+m)\}\\
		&< c_0f''(N)^{1/2} + (1-1/2r+c_1c_2) + c_2^2/2,
	\end{align*}
	whence 
	\begin{equation}
		1/2r-c_1c_2-c_2^2/2 < 2r^2f''(N) + (c_0+2r(c_1+c_2))f''(N)^{1/2}.
		\label{eqapp4}
	\end{equation}
	Since the assumption $0<c_2<\sqrt{c_1^2+1/r}-c_1$ implies $1/2r-c_1c_2-c_2^2/2>0$, 
	the inequality \eqref{eqapp4} is a contradiction because $N$ is sufficiently large.
	Therefore, 
	\[
	\limsup_{x\to\infty} \frac{L_{\alpha,3,r}(x)}{x^{1-\alpha/2}}
	\ge \frac{c_2}{\sqrt{\alpha(\alpha-1)}}.
	\]
	Finally, letting $c_2\to\sqrt{c_1^2+1/r}-c_1$ and $c_1\to r^{-1/2}$, 
	we obtain Proposition~\ref{prop:k3}.
\end{proof}

Finally, let us show the following proposition that supports $L_{\alpha,3,r}(x)=O_{\alpha,r}(x^{1-\alpha/2})$.

\begin{proposition}\label{prop:k3dense}
	Let $\alpha\in(1,2)$ and $r\in\mathbb{N}$, 
	and let $w(x)$ be an arbitrary positive-valued function such that 
	$x^{\alpha/2-1}w(x)\to0$ and $w(x)\to\infty$ as $x\to\infty$.
	Then 
	\[
	\lim_{M\to\infty} \frac{\#\{ N\in[1,M]_{\mathbb{Z}} : L_{\alpha,3,r}(N) \leq N^{1-\alpha/2}w(N) \}}{M}
	= 1.
	\] 
\end{proposition}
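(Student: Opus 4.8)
The plan is to reduce the statement to a short‑interval existence claim and then to exploit the fact that, on an interval of length $L_N:=\lfloor N^{1-\alpha/2}w(N)\rfloor$, the sequence $n\mapsto rf'(n)$ (with $f(x)=x^\alpha$) barely moves, while $n\mapsto f(n)$ already equidistributes modulo $1$. Set $\cQ=\{n\in\mathbb{N}:(\lfloor(n+rj)^\alpha\rfloor)_{j=0}^{2}\in\cP_{3,1}\}$ as in \eqref{eq16}. Since $L_{\alpha,3,r}(N)\le N^{1-\alpha/2}w(N)$ whenever $\cQ\cap[N,N+L_N)\neq\emptyset$, it suffices to show $\#\{N\in[1,M]_{\mathbb{Z}}:\cQ\cap[N,N+L_N)=\emptyset\}=o(M)$. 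As the starting point I would reuse the sufficient condition for membership in $\cQ$ isolated in the proofs of Propositions~\ref{main0'} and \ref{mainprop}: writing $g(x)=rf'(x)$ and $\epsilon(N):=2r^2f''(N)$, every integer $n\ge N$ (for $N$ large) with $(\{f(n)\},\{g(n)\}+s_1)\in\cC_{3,2}^{-}(\epsilon(N))$ for some $s_1\in\mathbb{Z}$ belongs to $\cQ$, where $\cC_{3,2}^{-}(\epsilon)=\{(y_0,y_1):0\le y_0<1,\ \epsilon\le y_0+2y_1<1-\epsilon\}$.

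Two quantitative facts feed the argument. First, since $g$ is increasing and $g'=rf''$ is eventually decreasing, $g(N+L_N)-g(N)\le L_N\,rf''(N)\ll_{\alpha,r} N^{(2-\alpha)/2}w(N)\cdot N^{\alpha-2}=:\eta_N$, and the hypothesis $x^{\alpha/2-1}w(x)\to0$ forces $\eta_N\to0$; thus, for $N$ large, $\{g(n)\}$ traverses an arc modulo $1$ of length $<\eta_N$ as $n$ runs over $[N,N+L_N)$. Second, running the Erd\H{o}s--Tur\'an--Koksma inequality (Lemma~\ref{ETK} with $d=1$) together with the second derivative test (Lemma~\ref{2ndderiv}) exactly as in Step~1 of the proof of Lemma~\ref{mainlemma} gives, for the one‑dimensional discrepancy $D_1(N,L)$ of $(\{f(n)\})_{n=N}^{N+L-1}$, the bound $D_1(N,L)\ll_\alpha N^{(\alpha-2)/3}\log N+N^{(2-\alpha)/2}/L$ valid for $1\le L\le N$; in particular $D_1(N,L_N)\le D^*(N):=C_\alpha\bigl(N^{(\alpha-2)/3}\log N+1/w(N)\bigr)\to0$ \emph{uniformly in $N$}, for all $N\ge N_0(\alpha)$ (note $L_N=o(N)$, since $w$ is subpolynomial, so the range condition holds). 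Crucially this uses $w(x)\to\infty$.

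Now put $\eta_N':=\eta_N+\epsilon(N)+D^*(N)$, which still tends to $0$, and let $B_M$ be the set of $N\in[1,M]_{\mathbb{Z}}$ with $\{g(N)\}$ lying within $\eta_N'$ modulo $1$ of $0$ or of $1/2$. If $N\notin B_M$ and $N\ge N_0$, then $\{g(N)\}$ lies in one of the arcs $(\eta_N',\tfrac12-\eta_N')$ or $(\tfrac12+\eta_N',1-\eta_N')$. In the first case one checks, using the arc bound above, that $\epsilon(N)/2<\{g(n)\}<\tfrac12$ for every $n\in[N,N+L_N)$, so for such $n$ the membership $(\{f(n)\},\{g(n)\})\in\cC_{3,2}^{-}(\epsilon(N))$ reduces to the requirement $\{f(n)\}\in[0,\,1-\epsilon(N)-2\{g(n)\})$, an interval containing $[0,\ell_N)$ with $\ell_N:=2(\eta_N'-\eta_N)-\epsilon(N)=\epsilon(N)+2D^*(N)>D^*(N)$; since $D_1(N,L_N)\le D^*(N)<\ell_N$, the number of $n\in[N,N+L_N)$ with $\{f(n)\}\in[0,\ell_N)$ is at least $L_N(\ell_N-D^*(N))=L_N(\epsilon(N)+D^*(N))\ge1$ for $N$ large, so $\cQ\cap[N,N+L_N)\neq\emptyset$. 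The second case is symmetric, with $s_1=-1$ and target window $(1-\ell_N,1)$. It remains to see $\#B_M=o(M)$: the sequence $(\{g(n)\})_{n\ge1}$ is uniformly distributed modulo $1$ (by Proposition~\ref{ud-Hardy} applied to $g=r\alpha x^{\alpha-1}\in\cH$, or from the corollary following it together with the integer‑matrix invariance of uniform distribution), so for fixed $\eta>0$ and all $N\ge N_1(\eta)$ one has $\eta_N'<\eta$, whence $\#B_M\le N_1(\eta)+\#\{N\le M:\{g(N)\}\in[0,\eta]\cup[\tfrac12-\eta,\tfrac12+\eta]\cup[1-\eta,1)\}=N_1(\eta)+4\eta M+o_\eta(M)$; letting $\eta\to0$ gives $\#B_M=o(M)$. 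Combining, $\#\{N\in[1,M]_{\mathbb{Z}}:L_{\alpha,3,r}(N)\le N^{1-\alpha/2}w(N)\}\ge M-N_0-\#B_M=M-o(M)$, and dividing by $M$ and letting $M\to\infty$ proves the proposition.

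The delicate point is the two‑scale balance hidden in $L_N=N^{1-\alpha/2}w(N)$: the interval must be long enough for $(\{f(n)\})$ to equidistribute, which requires $L_N\succ N^{(2-\alpha)/2}$ — this is exactly where $w\to\infty$ is used — while being short enough that the orbit of $g$ is frozen, which requires $L_N\prec N^{2-\alpha}$, i.e.\ $x^{\alpha/2-1}w(x)\to0$. Because the usable window for $\{f(n)\}$ then has length only $\asymp\eta_N'\to0$, the forbidden margin $\eta_N'$ about $\{0,\tfrac12\}$ must shrink, yet must still dominate $\epsilon(N)$, $\eta_N$ and $D^*(N)$; the whole role of the uniform distribution of $(g(n))$ is to turn this shrinking margin into a set of exceptional $N$ of density zero. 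Getting those inequalities mutually consistent, together with carrying out the case analysis on $s_1\in\{0,-1\}$ carefully enough (including verifying that the $j=1$ constraint is automatically satisfied, as in the proof of Proposition~\ref{mainprop}), is the bulk of the work.
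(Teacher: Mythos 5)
Your proof is correct and follows essentially the same strategy as the paper's: a one\-dimensional Erd\H{o}s--Tur\'an--Koksma plus van der Corput discrepancy bound for $(f(n))_{n=N}^{N+L-1}$ on the short interval, the near\-constancy of $rf'(n)$ there, a case split on $\{rf'(N)\}$ lying below or above $1/2$ (i.e.\ $s_1=0$ or $s_1=-1$), and uniform distribution of $(rf'(N))_N$ to show the exceptional $N$ have density zero. The only difference is organizational: the paper fixes a margin $\epsilon>0$ with windows $(\epsilon,3\epsilon)$ and $(1-2\epsilon,1-\epsilon)$ and lets $\epsilon\to0$ at the end, while you use $N$\-dependent shrinking margins $\eta_N'$ and windows of length $\ell_N$ — both work.
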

\begin{proof}
	For $N,L\in\mathbb{N}$, 
	define $D(N,L)$ as the discrepancy of the sequence $(f(n))_{n=N}^{N+L-1}$.
	Let $L=L(N)=\lceil{N^{1-\alpha/2}w(N)}\rceil$ and $H=H(N)=\lceil{N^{(2-\alpha)/3}}\rceil$.
	Lemma~\ref{ETK} with $d=1$ and Lemma~\ref{2ndderiv} imply that for every $N\in\mathbb{N}$, 
	\begin{align*}
		D(N,L) &\ll \frac{1}{H} + \frac{1}{L}\sum_{h=1}^H \frac{1}{h}\abs{\sum_{n=N}^{N+L-1} e(hf(n))}\\
		&\ll_{\alpha,w(\cdot)} 1/H + H^{1/2}N^{\alpha/2-1} + N^{1-\alpha/2}/L
		\ll N^{(\alpha-2)/3} + 1/w(N).
	\end{align*}
	Thus, there exists $C>0$ such that for every $N\in\mathbb{N}$, 
	\[
	D(N,L) \le C(N^{(\alpha-2)/3} + 1/w(N)).
	\]
	Now, let $\epsilon\in(0,1/6)$ be arbitrary.
	Define the sets $\cC_{3,2}^{-}(\epsilon)$, $\cV_0$, $\cV_1$ and $\cV$ as 
	\begin{align*}
		\cC_{3,2}^{-}(\epsilon) &= \{ (y_0,y_1)\in\mathbb{R}^2 : 0\le y_0<1,\ 0\le y_0+2y_1<1-\epsilon \},\\
		\cV_0 &= \bigl\{ N\in\mathbb{N} : \{rf'(N)\} < 1/2-3\epsilon \bigr\},\\
		\cV_1 &= \bigl\{ N\in\mathbb{N} : 1/2+\epsilon < \{rf'(N)\} < 1-\epsilon \bigr\},\\
		\cV &= \{ N\in\mathbb{N} : L_{\alpha,3,r}(N) \leq N^{1-\alpha/2}w(N) \}.
	\end{align*}
	Due to the assumptions $x^{\alpha/2-1}w(x)\to0$ and $w(x)\to\infty$, 
	we can taking a positive number $x_0$ such that 
	\begin{enumerate}
		\item
		$C(x^{(\alpha-2)/3} + 1/w(x)) < 2\epsilon$ for all $x\ge x_0$,
		\item
		$r\alpha(\alpha-1)x^{\alpha/2-1}w(x) < \epsilon$ for all $x\ge x_0$,
		\item
		$2r^2f''(x)\le\epsilon$ for all $x\ge x_0$.
	\end{enumerate}
	Let us show the inclusion relation $(\cV_0\cup\cV_1)\cap[x_0,\infty)\subset\cV$ below.
	\par
	First, assume $N\in\cV_0\cap[x_0,\infty)$.
	Then the set $\cW_0 \coloneqq \{ n\in[0,L)_{\mathbb{Z}} : \epsilon<\{f(N+n)\}<3\epsilon \}$ satisfies 
	\[
	\#\cW_0/L \ge 2\epsilon - D(N,L)
	= 2\epsilon - C(N^{(\alpha-2)/3} + 1/w(N)) \overset{(1)}{>} 0.
	\]
	Take an element $m\in\cW_0\neq\emptyset$.
	Then the assumption $N\in\cV_0$ implies that  
	\begin{align*}
		\{rf'(N+m)\} &\leq \{rf'(N)\} + rmf''(N)
		< 1/2-3\epsilon + r\alpha(\alpha-1)(L-1)N^{\alpha-2}\\
		&< 1/2-3\epsilon + r\alpha(\alpha-1)N^{\alpha/2-1}w(N)
		\overset{(2)}{<} 1/2-2\epsilon.
	\end{align*}
	Thus, 
	\[
	0 \le \{f(N+m)\}+2\{rf'(N+m)\} < 3\epsilon + 2(1/2-2\epsilon) = 1-\epsilon,
	\]
	whence $(\{f(N+m)\}, \{rf'(N+m)\})\in\cC_{3,2}^{-}(\epsilon)$.
	Therefore, $(\lfloor{f(N+m+rj)}\rfloor)_{j=0}^2$ is an AP (see the proof of Proposition~\ref{main0'}).
	Since the inequality $L_{\alpha,3,r}(N) \le m < L$ holds, 
	it turns out that $N$ lies in $\cV$.
	\par
	Next, assume $N\in\cV_1\cap[x_0,\infty)$.
	The set $\cW_1 \coloneqq \{ n\in[0,L)_{\mathbb{Z}} : 1-2\epsilon<\{f(N+n)\}<1-\epsilon \}$ is also not empty 
	in the same way as $\cW_0\neq\emptyset$.
	Take an element $m\in\cW_1$.
	Since the difference $rf'(N+m) - rf'(N)$ is bounded above by 
	\[
	rmf''(N) \le r\alpha(\alpha-1)(L-1)N^{\alpha-2}
	< r\alpha(\alpha-1)N^{\alpha/2-1}w(N) \overset{(2)}{<} \epsilon,
	\]
	the assumption $N\in\cV_1$ implies $\{rf'(N+m)\} \ge \{rf'(N)\} > 1/2+\epsilon$.
	This and $1-2\epsilon<\{f(N+m)\}<1-\epsilon$ yield that 
	\begin{gather*}
		\{f(N+m)\} + 2(\{rf'(N+m)\}-1) < 1-\epsilon,\\
		\{f(N+m)\} + 2(\{rf'(N+m)\}-1)
		> 1-2\epsilon + 2(1/2+\epsilon-1) = 0,
	\end{gather*}
	whence $(\{f(N+m)\}, \{rf'(N+m)\}-1)\in\cC_{3,2}^{-}(\epsilon)$.
	Therefore, $(\lfloor{f(N+m+rj)}\rfloor)_{j=0}^2 $ is an AP (see the proof of Proposition~\ref{main0'}).
	Since the inequality $L_{\alpha,3,r}(N) \le m < L$ holds, 
	it turns out that $N$ lies in $\cV$.
	\par
	The inclusion relation $(\cV_0\cup\cV_1)\cap[x_0,\infty)\subset\cV$ has been proved above.
	Since the sequence $(rf'(N))_{N=1}^\infty$ is uniformly distributed modulo $1$ 
	and the sets $\cV_0$ and $\cV_1$ are disjoint, 
	it follows that 
	\begin{align*}
		\liminf_{M\to\infty} \frac{\#(\cV\cap[1,M])}{M}
		&\geq \liminf_{M\to\infty} \frac{\#(\cV_0\cap[1,M])}{M}
		+ \liminf_{M\to\infty} \frac{\#(\cV_1\cap[1,M])}{M}\\
		&\geq (1/2-3\epsilon) + (1/2-2\epsilon)
		= 1-5\epsilon.
	\end{align*}
	Letting $\epsilon\to+0$, 
	we obtain Proposition~\ref{prop:k3dense}.
\end{proof}

\end{document}